\documentclass[a4paper,reqno]{amsart}
\usepackage{latexsym}
\usepackage{amssymb,amsthm,amsmath}
\usepackage{graphicx}
\usepackage{graphics}
\usepackage{epstopdf}
\usepackage{caption}
\usepackage{epsfig}
\usepackage{float}
\theoremstyle{plain}
\newtheorem{theorem}{Theorem}
\newtheorem{lemma}[theorem]{Lemma}
\newtheorem{corollary}[theorem]{Corollary}
\newtheorem{proposition}[theorem]{Proposition}
\newtheorem{conjecture}[theorem]{Conjecture}

\theoremstyle{definition}
\newtheorem{definition}[theorem]{Definition}
\theoremstyle{remark}
\newtheorem{remark}[theorem]{Remark}

\newcommand{\auf}{\left\langle}
\newcommand{\zu}{\right\rangle}
\DeclareMathOperator{\esssup}{esssup}

\def\Rd{{\mathbf{R}^{d}}}

\let\mib=\boldsymbol
\def\mxi{{\mib \xi}}
\def\mx{{\mathbf{x}}}

\def\N{{\mathbf{N}}}

\begin{document}

\title[Microlocal defect functionals in VMO]{Microlocal defect functionals in VMO: Geometric localisation and applications to highly heterogeneous media}

\author{M.~Mi\v{s}ur}
\address{Marin Mi\v{s}ur,
University of Zagreb, Faculty of Science, Bijeni\v{c}ka cesta 30,
10000 Zagreb, Croatia.
E-mail: mmisur@math.hr}

\date{\today}

\begin{abstract}
We extend the concept of microlocal defect functionals to test functions belonging to the space $\mathrm{L}^\infty\cap\mathrm{VMO}_c$.
Following L.~Tartar's remark that an extension of such concepts to $\mathrm{VMO}$ spaces should be possible, we establish a
functional-analytic framework for this extension within the $\mathrm{L}^p-\mathrm{L}^q$ setting. Because the topological dual of
$\mathrm{VMO}$ is the Hardy space $\mathcal{H}^1$, the resulting object takes the form of an H-distribution rather than a non-negative
Radon measure. By assuming strict H\"older conjugate inequalities, we use the John-Nirenberg inequality over localized domains to
construct these functionals. We show that the functional acts as a distribution on the inductive limit topology of the test spaces,
giving geometric localisation principles for equations with rough $\mathrm{VMO}$ coefficients, that is, coefficients admitting sharp
transitions of vanishing mean oscillation. We illustrate the framework in three settings: stratified transport, zero-order non-local
cross-phase energies, and sub-critical acoustic scattering in high-contrast media. These differ in the nonlinearity generating the
companion sequence but share a common geometric conclusion, since in each case the macroscopic energy defect is confined to the
characteristic variety of the underlying flow, being supported where $\sum_j a_j(\mx)\xi_j=0$.
\end{abstract}

\subjclass[2020]{Primary 35A27, 46F10; Secondary 35B40, 42B30, 46E30}
\keywords{Microlocal defect functionals, H-distributions, vanishing mean oscillation, compensated compactness, highly heterogeneous media}

\maketitle

\section{Introduction}\label{sec:intro}
The theory of H-measures, introduced independently by Tartar \cite{Tgth} and G\'erard \cite{Gerard}, has become a standard tool in
studying the oscillations and concentration effects of weakly converging sequences in $\mathrm{L}^2$. A fundamental requirement of
this classical construction is the commutation of multiplication and Fourier multiplier operators. Tartar noted in his introductory
text \cite{Tartarbook} that because the commutator $[M_b, \mathcal{A}_\psi]$ remains compact when the multiplication symbol $b$
belongs to the space of functions of vanishing mean oscillation ($\mathrm{VMO}$) \cite{Uchiyama}, an extension of microlocal defect
functionals to $\mathrm{VMO}$ test functions should be possible.

In this note, we establish the mathematical framework required for such an extension. Because the topological dual of global
$\mathrm{VMO}(\Rd)$ is the Hardy space $\mathcal{H}^1(\Rd)$ \cite{SteinHA}, and elements of $\mathcal{H}^1(\Rd)$ must satisfy
a zero-integral moment condition, any non-negative measure acting continuously on global $\mathrm{VMO}(\Rd)$ is necessarily
trivial. While restricting the test space to compactly supported functions shifts the dual to a local Hardy space (bypassing
this strict moment condition), the strict integrability gap of the $\mathrm{L}^p-\mathrm{L}^q$ framework fundamentally
necessitates that the proposed extension takes the form of an H-distribution, joining other significant generalizations of
the classical theory, such as the ultraparabolic H-measures developed by Panov \cite{Panov}. The H-distribution framework
was introduced by Antoni\'c and Mitrovi\'c \cite{AM} for the $\mathrm{L}^p-\mathrm{L}^q$ setting, generalized for compensated
compactness by Mi\v{s}ur and Mitrovi\'c \cite{MM}, and further expanded to encompass distributions of anisotropic
order \cite{AntonicErcegMisur, MisurPalle2025}. We demonstrate that by introducing asymmetry into the integrability of the
weakly converging sequences, we obtain sufficient integrability conditions to apply the John-Nirenberg inequality.
Restricting the spatial test functions to have compact support permits the construction of H-distributions that are continuous
with respect to the inductive limit topology of $\mathrm{VMO}_c$, providing an extension to $\mathrm{VMO}$ coefficients while
respecting the functional-analytic properties of the space on $\Rd$. These three requirements, namely compact support, the strict
integrability gap, and the LF-space topology, are the structural prerequisites underlying the continuity bound; we collect and
discuss them once, in Remark \ref{rem:prerequisites} below, and refer back to them thereafter rather than restating them.

We emphasize what is, and what is not, new here, since the construction shares its architecture with the existing H-distribution
literature. The scheme of Theorem \ref{thm:h_dist_vmo}, namely subsequence extraction, transfer of the multiplier onto the second
sequence by its adjoint, elimination of the commutator, and diagonal extraction over an exhausting family of compact sets,
follows \cite{AM, MM}, and the commutator compactness driving it is quoted from \cite{AMM, mmphd} (Lemma \ref{1stcommlemma_Lp}).
The new content lies in replacing the continuous test functions of those works by the rough class
$\mathrm{L}^\infty\cap\mathrm{VMO}_c(\Rd)$, which forces three steps absent from the prior constructions. First, continuity is
no longer a supremum bound but is obtained through the strict gap $\frac1p+\frac1q<1$ and a localized John--Nirenberg inequality
(Proposition \ref{prop:JN_constant}). Second, the resulting estimate is only local, so the functional is continuous for the
inductive-limit topology of the strict LF-space $\mathrm{VMO}_c(\Rd)$, a space not previously introduced, whose requisite
properties are established in the Appendix and invoked at the corresponding steps. Third, in Theorems \ref{thm:localisation}
and \ref{thm:defect_rep} the coefficients are only of class $\mathrm{L}^\infty\cap\mathrm{VMO}$, handled without weak
differentiability via a Riesz-potential testing sequence and the Banach-algebra absorption of the coefficients into the test slot.
Beyond this substitution, Theorems \ref{thm:localisation} and \ref{thm:defect_rep} follow the short arguments of \cite{AM, MM} closely;
the new content is the functional-analytic apparatus that renders the $\mathrm{VMO}_c$ evaluations legitimate.

The paper is structured as follows. In Section \ref{sec:main_result}, we construct the $\mathrm{VMO}$ H-distribution and establish
its bounds via the John-Nirenberg inequality, addressing the topological necessity of the inductive limit space $\mathrm{VMO}_c(\Rd)$.
Section \ref{sec:localisation} presents the Localisation Principle, bypassing the lack of weak differentiability in $\mathrm{VMO}$
coefficients. In Section \ref{sec:defect_rep}, we establish the macroscopic defect representation theorem, followed by the formalization
of canonical testing sequences in Section \ref{sec:canonical}, adapted for unbounded domains, where we also isolate a unified defect
theorem that abstracts the common core shared by the applications. The remainder of the paper applies the framework to several PDE classes.
In Section \ref{sec:transport}, we use the Localisation Principle to determine the characteristic support of stationary transport defects
in stratified fluids. In Sections \ref{sec:zero_order} and \ref{sec:acoustics}, we evaluate zero-order cross-phase energy defects and the
geometric trapping of sub-critical acoustic scattering in high-contrast media. Together, these applications show how the framework
identifies where macroscopic energy is trapped and scattered by $\mathrm{VMO}$ heterogeneities. Finally, an Appendix establishes the
topological properties of the strict LF-space $\mathrm{VMO}_c(\Rd)$. This includes proofs concerning its Fr\'echet duality,
the failure of the Montel property, and its compatibility with fractional Sobolev regularity, which together justify the operations used
throughout the main text.

\section{Notation, function spaces and auxiliary results}\label{sec:notation}
Let $\Omega\subset\Rd$ be a bounded open connected set.
A locally integrable function $f$ on $\Omega$ belongs to the space of \emph{functions of bounded mean oscillation}, 
denoted by $\mathrm{BMO}(\Omega)$, if there exists a constant $A>0$ such that for all open balls $B$ with $\mathrm{Cl}B\subset\Omega$:

\begin{equation}\label{BMOdef}
\frac{1}{|B|}\int_B |f-f_B|\, d\mx\leq A\;,
\end{equation}

\noindent where $f_B$ is the mean value of $f$ over $B$.
Identifying functions that differ by a constant almost everywhere, $\mathrm{BMO}(\Omega)$ is a complete, non-separable Banach space.
Its closed subspace of \emph{functions of vanishing mean oscillation}, denoted by $\mathrm{VMO}(\Omega)$, is defined as the closure of
$\mathrm{C}_c(\Omega)$ in the $\mathrm{BMO}(\Omega)$ norm \cite{CoifW}.
Throughout, $\mathrm{VMO}$ denotes this Coifman--Weiss space (the $\mathrm{C}_c$-closure), and not Sarason's space obtained as the closure
of bounded uniformly continuous functions; this is the choice for which the duality $\mathrm{VMO}(\Rd)' = \mathcal{H}^1(\Rd)$ employed below holds.
The space $\mathrm{VMO}(\Omega)$ is separable and complete. 

To use the necessary compactness properties of our operators on $\Rd$, we require our test functions to have compact support.
For a compact set $K\subset\Rd$, we define the space $\mathrm{VMO}_K(\Rd)$ as the space of all $\mathrm{VMO}(\Rd)$ functions with compact support in $K$:

\begin{equation*}
\mathrm{VMO}_K(\Rd) = \left\{ \varphi\in\mathrm{VMO}(\Rd):\, \mathrm{supp}\varphi\subseteq K \right\}.
\end{equation*}

Because convergence in the global $\mathrm{VMO}(\Rd)$ norm implies convergence in $\mathrm{L}^1_{loc}(\Rd)$, the support of the limit
function is forced to remain almost everywhere within $K$. Therefore, $\mathrm{VMO}_K(\Rd)$ is a closed subspace of the complete Banach
space $\mathrm{VMO}(\Rd)$, making it a Banach space in its own right.
Given an increasing sequence of compact sets $(K_n)_n$ exhausting $\Rd$, i.e., $K_i\subset \mathrm{Int}K_{i+1}$ and $\Rd = \bigcup_{n\in\N}K_n$,
we consider the space of all $\mathrm{VMO}(\Rd)$ functions with compact support:
\begin{equation*}
\mathrm{VMO}_c(\Rd) = \bigcup_{n\in\N} \mathrm{VMO}_{K_n}(\Rd)\, .
\end{equation*}

We equip $\mathrm{VMO}_c(\Rd)$ with the locally convex \emph{inductive limit topology} generated by the natural inclusions
$i_n:\mathrm{VMO}_{K_n}(\Rd)\to \mathrm{VMO}_c(\Rd)$. 
Because each step space $\mathrm{VMO}_{K_n}(\Rd)$ is a closed subspace of $\mathrm{VMO}_{K_{n+1}}(\Rd)$, this forms a
\emph{strict inductive limit}, establishing $\mathrm{VMO}_c(\Rd)$ as a strict LF-space. 
Consequently, the space is complete, Hausdorff, and the topology it induces on any subspace $\mathrm{VMO}_{K_n}(\Rd)$ coincides
with its original Banach space topology.
By the universal property of strict LF-spaces, verifying the continuity of a linear or bilinear functional on $\mathrm{VMO}_c(\Rd)$
reduces entirely to verifying its continuity on each fixed Banach space $\mathrm{VMO}_{K_n}(\Rd)$. This structural guarantee justifies
bounding our functionals in subsequent sections strictly via the standard $\mathrm{BMO}$ norm localized to the specific compact support
of the test functions.

This extrinsic topological construction is a deliberate and necessary choice. If one were to define the space intrinsically, evaluating
the mean oscillation only over balls strictly contained within the compact set $K$, then extending such functions by zero to the whole
of $\Rd$ would introduce sharp boundary jumps. Because of the non-local nature of the John-Nirenberg mean oscillation, evaluating the
$\mathrm{BMO}$ norm across these artificial boundaries causes the integrals to blow up, mirroring the well-known zero-extension pathologies
of fractional Sobolev spaces on bounded domains. 

By defining $\mathrm{VMO}_c(\Rd)$ extrinsically, via the strict inductive limit of global functions restricted to compact supports, the
functions possess a controlled decay to zero. This bypasses the boundary singularities and lets non-local operators, such as global
Fourier multipliers and fractional derivatives, act on the space without restriction.

The ambient test space is the full strict LF-space $\mathrm{VMO}_c(\Rd)$, endowed with the inductive limit topology generated by the
$\mathrm{BMO}$ norms of its Banach steps $\mathrm{VMO}_{K_n}(\Rd)$; the space $\mathrm{L}^\infty(\Rd)$ never enters the topology.
The essential boundedness of the test functions is instead imposed as a \emph{set restriction} on the subclass over which the microlocal
functionals are directly evaluated.
Concretely, the bilinear functional $D$ of Section \ref{sec:main_result} is defined by an explicit integral formula on the dense subspace
$\mathrm{L}^\infty(\Rd)\cap\mathrm{VMO}_c(\Rd)$, where boundedness is genuinely required (see Remark \ref{rem:boundedness}), and is then
extended to all of $\mathrm{VMO}_c(\Rd)$ by continuity, using the fact that the localized $\mathrm{BMO}$ seminorm is a genuine norm on
compactly supported functions (a function of vanishing mean oscillation with support in a proper compact set is constant only if it is zero).

The intersection space $\mathrm{L}^\infty(\Rd)\cap\mathrm{VMO}(\Rd)$ additionally carries a Banach algebra structure that is exploited
when absorbing variable coefficients into the test slot.
By themselves, $\mathrm{BMO}(\Rd)$ and $\mathrm{VMO}(\Rd)$ are linear spaces but fail to be algebras under pointwise multiplication,
as elements can exhibit localized logarithmic singularities.
Intersecting with $\mathrm{L}^\infty(\Rd)$ provides the necessary truncation, establishing a Banach algebra.

Specifically, for any $f, g \in \mathrm{L}^\infty(\Rd) \cap \mathrm{BMO}(\Rd)$, the product $fg$ belongs to the same space, and its
$\mathrm{BMO}$ seminorm satisfies the uniform bound:
\begin{equation*}
\|fg\|_{\mathrm{BMO}(\Rd)} \leq C \Big( \|f\|_{\mathrm{L}^\infty(\Rd)} \|g\|_{\mathrm{BMO}(\Rd)} + \|g\|_{\mathrm{L}^\infty(\Rd)} \|f\|_{\mathrm{BMO}(\Rd)} \Big)
\end{equation*}
where $C>0$ is a dimensional constant.

Because $\mathrm{VMO}(\Rd)$ is a closed subspace, this algebra property extends to $\mathrm{L}^\infty(\Rd) \cap \mathrm{VMO}(\Rd)$.
Furthermore, if one of the functions has compact support, the product inherits it.
Consequently, multiplying a global coefficient $Q \in \mathrm{L}^\infty(\Rd) \cap \mathrm{VMO}(\Rd)$ by a localized test function
$\phi \in \mathrm{L}^\infty(\Rd) \cap \mathrm{VMO}_c(\Rd)$ guarantees that $Q\phi \in \mathrm{L}^\infty(\Rd) \cap \mathrm{VMO}_c(\Rd)$.
This algebraic closure is what allows variable coefficients to be absorbed into the spatial test functions of the corresponding
H-distributions in subsequent sections.

\begin{remark}\label{rem:boundedness}
The essential boundedness of the spatial test functions is not a topological requirement but an \emph{analytic} one, and it enters the
construction in three distinct ways; we stress that these are the analytic roles of the $\mathrm{L}^\infty$ factor, logically separate
from the three structural prerequisites (compact support, the strict integrability gap, and the LF-space topology) collected in Remark \ref{rem:prerequisites}.
First, for $\varphi$ to act as a pointwise multiplier that preserves the underlying Lebesgue spaces, so that $\varphi u_n\in\mathrm{L}^p(\Rd)$
and $\varphi v_n\in\mathrm{L}^q(\Rd)$ and the defining integrals of the H-distribution converge, one needs $\varphi\in\mathrm{L}^\infty(\Rd)$;
a merely $\mathrm{VMO}$ function may carry a local logarithmic singularity and fail to be a bounded multiplier.
Second, the $\mathrm{L}^p$-variant of the First Commutation Lemma (Lemma \ref{1stcommlemma_Lp}), which is precisely what reduces the bilinear
form to a functional of the product $\varphi_1\overline{\varphi_2}$, requires the multiplication symbol to lie in $\mathrm{L}^\infty(\Rd)\cap\mathrm{VMO}(\Rd)$;
the compactness of $[M_b,\mathcal{A}_\psi]$ genuinely fails for unbounded $b$.
Third, the Banach algebra bound stated above, used to absorb the variable coefficients $Q$ and $a_j$ into the test slot, relies on the $\mathrm{L}^\infty$ factor.
Since all three requirements concern only the functions on which $D$ is explicitly evaluated, boundedness is imposed as a set restriction rather than as part of the topology.
The values of the continuous extension of $D$ to unbounded elements of $\mathrm{VMO}_c(\Rd)$, which carry no direct integral interpretation, are never used:
every evaluation appearing in the Localisation Principle and in the applications is of the form $D(Q\phi,1)$ or $D(a_j\varphi,\kappa_j\psi)$ with all factors bounded.
\end{remark}

This intersection space also behaves well under standard regularization.
For any $f \in \mathrm{L}^\infty(\Rd) \cap \mathrm{VMO}(\Rd)$ and a standard smooth mollifier $\eta_\epsilon$, the convolution
$f \ast \eta_\epsilon$ satisfies $\|f \ast \eta_\epsilon\|_{\mathrm{L}^\infty(\Rd)} \leq \|f\|_{\mathrm{L}^\infty(\Rd)}$ and
$\|f \ast \eta_\epsilon\|_{\mathrm{BMO}(\Rd)} \leq C \|f\|_{\mathrm{BMO}(\Rd)}$, with $f \ast \eta_\epsilon \to f$ in the $\mathrm{BMO}$ norm as $\epsilon \to 0$.
This stability under mollification is used when constructing sequence approximations for equations with rough $\mathrm{VMO}$ coefficients.

Furthermore, because $\mathrm{VMO}_c(\Rd)$ is a separable strict LF-space, the space of classical smooth test functions
$\mathcal{D}(\Rd)$ is dense within it in the inductive limit ($\mathrm{BMO}$-based) topology, which bridges the gap between
rough $\mathrm{VMO}$ coefficients and smooth analytical techniques. This topology also shifts the dual space from the global
Hardy space to a Fr\'echet space of local Hardy distributions, which permits the existence of non-trivial, localized microlocal
functionals (Proposition \ref{prop:frechet_dual} in Appendix \ref{sec:appendix_topology}). We emphasize that in the
$\mathrm{L}^p$--$\mathrm{L}^q$ setting the resulting object is a genuine (signed, complex-valued) H-distribution and not a
non-negative measure. This continuous topological embedding is what justifies the application of global Fourier multiplier
operators to sequences within our localized test space.

By $\hat{u}$ or $\mathcal{F}(u)$ we denote the Fourier transform. Throughout, $\kappa$ denotes the least integer strictly greater
than $d/2$, so that $\mathrm{C}^\kappa(\mathrm{S}^{d-1})$ is the Banach space of $\kappa$-times continuously differentiable functions
on the sphere and the H\"ormander--Mikhlin smoothness threshold $\kappa > d/2$ is met. A Fourier multiplier operator with symbol
$\psi \in \mathrm{C}^\kappa(\mathrm{S}^{d-1})$ is denoted by $\mathcal{A}_\psi(u) = \left(\psi(\mxi/|\mxi|) \hat{u}(\mxi)  \right)^\lor$.
We rely on an $\mathrm{L}^p$-variant of the First Commutation Lemma, which uses Krasnoselskij-type interpolation arguments
alongside classical results for the Riesz transform.
A detailed proof of this result can be found in the published article \cite{AMM} as well as in the PhD thesis \cite{mmphd}.

\begin{lemma}\label{1stcommlemma_Lp}
Let $b\in\mathrm{L}^\infty(\Rd)\cap\mathrm{VMO}(\Rd)$ and $\psi\in\mathrm{C}^\kappa(\mathrm{S}^{d-1})$ for $\kappa > d/2$.
Then the commutator $[M_b,\mathcal{A}_\psi] = M_b \mathcal{A}_\psi - \mathcal{A}_\psi M_b$ is a compact operator on $\mathrm{L}^p(\Rd)$
for all $p\in(1,\infty)$.
\end{lemma}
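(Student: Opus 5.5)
The plan is to reduce compactness to the Uchiyama theorem for Calderón--Zygmund commutators, combined with an approximation in the symbol $\psi$ and an interpolation argument in $p$.

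\emph{Step 1: spherical harmonics.} Expand $\psi\in\mathrm{C}^\kappa(\mathrm{S}^{d-1})$ in spherical harmonics, $\psi=\sum_k\sum_j c_{kj}Y_{kj}$. For each harmonic of degree $k\ge1$, the multiplier with symbol $Y_{kj}(\mxi/|\mxi|)$ is a Calderón--Zygmund singular integral. Its kernel is $\gamma_k Y_{kj}(\mx/|\mx|)/|\mx|^d$, so it is a homogeneous polynomial in the Riesz transforms $\mathcal{R}_1,\dots,\mathcal{R}_d$. The degree-zero term is a constant multiple of the identity, and it commutes with $M_b$.

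\emph{Step 2: compactness for finitely many harmonics.} By Uchiyama's theorem, for $b\in\mathrm{VMO}$ each commutator $[M_b,\mathcal{R}_j]$ is compact on $\mathrm{L}^p(\Rd)$ for every $p\in(1,\infty)$. The Leibniz rule $[M_b,ST]=[M_b,S]T+S[M_b,T]$, together with boundedness of the Riesz transforms on $\mathrm{L}^p$, then gives compactness of $[M_b,P(\mathcal{R})]$ for any polynomial $P$. Hence $[M_b,\mathcal{A}_{\psi_N}]$ is compact, where $\psi_N$ denotes a finite truncation of the expansion.

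\emph{Step 3: passing to the limit in the symbol.} Compact operators form a closed subspace in operator norm, and
\[
\|[M_b,\mathcal{A}_\psi-\mathcal{A}_{\psi_N}]\|\le 2\|b\|_{\mathrm{L}^\infty}\|\mathcal{A}_{\psi-\psi_N}\|_{\mathcal{L}(\mathrm{L}^p)}.
\]
So it suffices to show $\|\mathcal{A}_{\psi-\psi_N}\|_{\mathcal{L}(\mathrm{L}^p)}\to0$. This is the main obstacle. Mikhlin's theorem bounds the norm by $\|\psi-\psi_N\|_{\mathrm{C}^\kappa}$, but harmonic truncations of a $\mathrm{C}^\kappa$ function need not converge in $\mathrm{C}^\kappa$.

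\emph{Step 4: interpolation (the Krasnoselskij step).} On $\mathrm{L}^2$ the multiplier norm equals $\|\psi-\psi_N\|_{\mathrm{L}^\infty}$. Since $\kappa>d/2$, this tends to zero: one can either use Cesàro-type means of the expansion, which are uniform contractions, or bound the $\ell^1$ sum of harmonic coefficients via Sobolev embedding. At the same time, the commutators $[M_b,\mathcal{A}_{\psi-\psi_N}]$ stay uniformly bounded on $\mathrm{L}^r$ for every $r\in(1,\infty)$. This follows from Mikhlin applied with a uniform $\mathrm{C}^\kappa$ bound on the Cesàro means, which I would verify for a suitable summation kernel on the sphere.

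Krasnoselskij's theorem then applies: if an operator is compact on $\mathrm{L}^2$ and bounded on $\mathrm{L}^r$, it is compact on every intermediate $\mathrm{L}^p$. Applied to $[M_b,\mathcal{A}_\psi]$, we get:
\begin{itemize}
\item compactness on $\mathrm{L}^2$, as the norm limit of the compact commutators $[M_b,\mathcal{A}_{\psi_N}]$ from Step 2;
\item boundedness on $\mathrm{L}^r$ for $r$ near $1$ and near $\infty$.
\end{itemize}
Together these give compactness on every $\mathrm{L}^p$, $1<p<\infty$.

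An alternative route is to avoid interpolation entirely and approximate $\psi$ directly in $\mathrm{C}^\kappa$ by smooth functions, since smooth functions have rapidly decaying harmonic coefficients. This fails, however, because $\mathrm{C}^\infty$ is not dense in $\mathrm{C}^\kappa$ in norm. That failure is exactly why I expect the interpolation route to be the one to pursue.
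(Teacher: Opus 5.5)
Your proof is correct and is essentially the argument the paper relies on. The paper does not reproduce it but quotes the lemma from its references, describing the proof as Krasnoselskij-type interpolation combined with classical Riesz-transform results. Those are your ingredients: Uchiyama's theorem plus the Leibniz rule for polynomial symbols, uniform approximation of $\psi$ on the sphere to get compactness on $\mathrm{L}^2$, and Krasnoselskij interpolation against the Mikhlin bound on $\mathrm{L}^r$. The uniform $\mathrm{C}^\kappa$ bound on Ces\`aro means is superfluous, since you only interpolate $[M_b,\mathcal{A}_\psi]$ itself.

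Your closing remark is wrong, however. On the compact manifold $\mathrm{S}^{d-1}$ the $\kappa$-th derivatives of $\psi$ are uniformly continuous, so $\mathrm{C}^\infty(\mathrm{S}^{d-1})$ is dense in $\mathrm{C}^\kappa(\mathrm{S}^{d-1})$. The direct route you discarded therefore works: approximate $\psi$ in $\mathrm{C}^\kappa$ by finite harmonic sums, then use Mikhlin and Uchiyama on each $\mathrm{L}^p$ separately. This gives a valid proof that avoids interpolation altogether.
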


\section{The Main Result: H-Distributions in VMO}\label{sec:main_result}
Because our test space is a strict inductive limit of infinite-dimensional Banach spaces, whose closed bounded sets are non-compact,
it fails the Montel property (Proposition \ref{prop:montel} in Appendix \ref{sec:appendix_topology}). Consequently, bounded sequences
of test functions do not admit strongly convergent subsequences, which is what forces the construction of macroscopic defect measures
to capture the persistent microlocal oscillations. We now state the main theorem establishing the existence of H-distributions with
compactly supported test functions in $\mathrm{VMO}$. We recall that, here and throughout, $\mathrm{VMO}$ denotes the Coifman--Weiss
space (the closure of $\mathrm{C}_c$ in the $\mathrm{BMO}$ norm), not Sarason's space; this is the choice under which the duality
$\mathrm{VMO}(\Rd)'=\mathcal{H}^1(\Rd)$ holds, and hence the choice governing the test space $\mathrm{VMO}_c(\Rd)$ employed below
(see the discussion following \eqref{BMOdef} in Section \ref{sec:notation}).

\begin{theorem}\label{thm:h_dist_vmo}
Let $p, q \in (1, \infty)$ such that $\frac{1}{p} + \frac{1}{q} < 1$.
Let $(u_n)_n$ and $(v_n)_n$ be sequences of functions bounded in $\mathrm{L}^p(\Rd)$ and $\mathrm{L}^q(\Rd)$ respectively.
Assume that $u_n \rightharpoonup 0$ weakly in $\mathrm{L}^p(\Rd)$ and $v_n \rightharpoonup 0$ weakly in $\mathrm{L}^q(\Rd)$.
Then, after passing to a subsequence (still denoted by $u_n, v_n$), there exists a \emph{jointly} continuous bilinear functional $D$
on $\big(\mathrm{L}^\infty(\Rd)\cap\mathrm{VMO}_c(\Rd)\big) \times \mathrm{C}^\kappa(\mathrm{S}^{d-1})$ for $\kappa > d/2$ such that
for all $\varphi_1, \varphi_2 \in \mathrm{L}^\infty(\Rd)\cap\mathrm{VMO}_c(\Rd)$ and $\psi \in \mathrm{C}^\kappa(\mathrm{S}^{d-1})$
it holds:

\begin{equation}\label{eq:h_dist_def}
D(\varphi_1 \overline{\varphi}_2,\psi) = \lim_n \int_{\Rd} \mathcal{A}_\psi(\varphi_1 u_n)(\mx)\overline{(\varphi_2 v_n)(\mx)}\, d\mx = \lim_n \int_{\Rd} \varphi_1(\mx) u_n(\mx) \overline{\left(\mathcal{A}_{\overline{\psi}}(\varphi_2 v_n)\right)(\mx)}\, d\mx.
\end{equation}

Furthermore, for every compact set $K_\varphi \subset \Rd$, there exists a constant $C_{K_\varphi} > 0$ depending on $K_\varphi, p, q, d$,
and the bounds of the sequences, such that for all test functions supported in $K_\varphi$, the functional satisfies:
\begin{equation}\label{eq:h_dist_bound}
|D(\varphi,\psi)|
\leq C_{K_\varphi} \|\varphi\|_{\mathrm{BMO}(\Rd)} \|\psi\|_{\mathrm{C}^\kappa(\mathrm{S}^{d-1})}.
\end{equation}
\end{theorem}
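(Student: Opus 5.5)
The construction follows the scheme of \cite{AM, MM}. For fixed $\varphi_1,\varphi_2\in\mathrm{L}^\infty(\Rd)\cap\mathrm{VMO}_c(\Rd)$ and $\psi\in\mathrm{C}^\kappa(\mathrm{S}^{d-1})$, set
\begin{equation*}
\mu_n(\varphi_1,\varphi_2,\psi)=\int_{\Rd}\mathcal{A}_\psi(\varphi_1u_n)\,\overline{\varphi_2v_n}\,d\mx .
\end{equation*}
Since $\varphi_i\in\mathrm{L}^\infty$ and $\mathcal{A}_\psi$ is bounded on $\mathrm{L}^p$ by the H\"ormander--Mikhlin theorem (this is where $\kappa>d/2$ enters), we have $|\mu_n|\le C\|\varphi_1\|_\infty\|\varphi_2\|_\infty\|\psi\|_{\mathrm{C}^\kappa}$, uniformly in $n$. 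This uses only $\frac1p+\frac1q\le1$ and the boundedness of $\varphi_i$. The second expression in \eqref{eq:h_dist_def} is the first one rewritten with the adjoint $\mathcal{A}_\psi^*=\mathcal{A}_{\overline\psi}$.

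\textbf{Step 1: reduction to the product $\varphi_1\overline{\varphi_2}$.} Write $\varphi_1=\varphi_1\chi$ with $\chi\in\mathcal{D}(\Rd)$ equal to $1$ on a neighbourhood of $\mathrm{supp}\,\varphi_1$. Lemma \ref{1stcommlemma_Lp} applied to $b=\overline{\varphi_2}$ gives
\begin{equation*}
\mu_n=\int\overline{\varphi_2}\,\mathcal{A}_\psi(\varphi_1u_n)\,\overline{v_n}=\int\mathcal{A}_\psi(\varphi_1\overline{\varphi_2}u_n)\,\overline{v_n}+o(1).
\end{equation*}
Indeed, the commutator is compact on $\mathrm{L}^p$, so it maps $\varphi_1u_n\rightharpoonup0$ to a strongly null sequence, which is then paired against $v_n$, bounded in $\mathrm{L}^{p'}$. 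That boundedness holds after localisation: $v_n$ is bounded in $\mathrm{L}^q\subset\mathrm{L}^{p'}_{loc}$ because $q>p'$, and the same cutoff argument with $\chi$ handles it. Hence any limit depends on $(\varphi_1,\varphi_2)$ only through $\varphi=\varphi_1\overline{\varphi_2}$, and we may define $D(\varphi,\psi)=\lim_n\int\mathcal{A}_\psi(\varphi u_n)\,\overline{\chi v_n}$ with $\chi=1$ on $\mathrm{supp}\,\varphi$.

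\textbf{Step 2: the BMO bound, which is the main obstacle.} The naive bound is $\|\varphi\|_\infty$, but \eqref{eq:h_dist_bound} requires control by $\|\varphi\|_{\mathrm{BMO}}$ alone. Let $r\in(1,\infty)$ be defined by $\frac1p+\frac1q+\frac1r=1$; this is exactly where the strict gap is used. H\"older's inequality gives
\begin{equation*}
|\mu_n|\le C\,\|\varphi\|_{\mathrm{L}^r(K_\varphi)}\,\sup_n\|u_n\|_{\mathrm{L}^p}\,\sup_n\|v_n\|_{\mathrm{L}^q}\,\|\psi\|_{\mathrm{C}^\kappa},
\end{equation*}
provided the multiplier is placed correctly. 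First, $\mathcal{A}_\psi$ is moved onto $\chi v_n$ via the adjoint form. Then $\varphi$ is split as $|\varphi|^{1/2}\cdot|\varphi|^{1/2}$ if necessary, or, more simply, $\int\varphi\,u_n\,\overline{\mathcal{A}_{\overline\psi}(\chi v_n)}$ is estimated by the three-factor H\"older inequality, using $\mathrm{L}^q$-boundedness of $\mathcal{A}_{\overline\psi}$.

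It then remains to bound $\|\varphi\|_{\mathrm{L}^r(K_\varphi)}\le C_{K_\varphi}\|\varphi\|_{\mathrm{BMO}}$ for $\varphi$ supported in $K_\varphi$. Take a ball $B\supset K_\varphi$ with $|B|\ge2|K_\varphi|$. The localized John--Nirenberg inequality (Proposition \ref{prop:JN_constant}) gives $\|\varphi-\varphi_B\|_{\mathrm{L}^r(B)}\le C|B|^{1/r}\|\varphi\|_{\mathrm{BMO}}$. Since $\varphi=0$ on $B\setminus K_\varphi$, a set of measure at least $|B|/2$, we get $|\varphi_B|\,|B|^{1/r}2^{-1/r}\le\|\varphi-\varphi_B\|_{\mathrm{L}^r(B)}$, so $|\varphi_B|$ is also controlled. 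This is precisely why the BMO seminorm is a norm on $\mathrm{VMO}_{K}$. The resulting constant depends on $K_\varphi,p,q,d$ and the sequence bounds, as claimed.

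\textbf{Step 3: extraction of the subsequence.} The space $\mathrm{VMO}_{K_m}(\Rd)$ is separable, and $\mathrm{C}^\kappa$ symbols can be approximated by a countable set of smooth symbols in the $\mathrm{C}^\kappa$-norm; if needed, separability is replaced by density of a countable subfamily, which suffices for the bounds. Choose countable sets $\{\varphi^{(j)}\}\subset\mathrm{L}^\infty\cap\mathrm{VMO}_{K_m}$ that are dense in BMO norm within $\mathrm{L}^\infty\cap\mathrm{VMO}_{K_m}$ (for instance mollified rational combinations), together with a countable dense set of $\psi$. A diagonal argument over $m$ and $j$ yields one subsequence along which $\mu_n(\varphi^{(j)},\psi)$ converges. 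The uniform, linear-in-$\varphi$ bound from Step 2 makes $\varphi\mapsto\mu_n(\varphi,\psi)$ equicontinuous in the BMO norm on each step space, so convergence extends to all $\varphi\in\mathrm{L}^\infty\cap\mathrm{VMO}_{K_m}$ and all $\psi$. The limit satisfies \eqref{eq:h_dist_bound}, and since the bound is bilinear this gives joint continuity on each step. Joint continuity on the strict LF-space then follows from the universal property recalled in Section \ref{sec:notation}, since bilinear maps on a product of a strict LF-space and a Banach space that are continuous on each step are jointly continuous. The second equality in \eqref{eq:h_dist_def} follows from the same commutator argument with the roles of the two sequences exchanged.
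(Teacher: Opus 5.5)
Your proposal is correct and follows essentially the paper's route: commutator compactness (Lemma \ref{1stcommlemma_Lp}) reduces the limit to the product $\varphi_1\overline{\varphi_2}$; the three-factor H\"older inequality with $\frac1r=1-\frac1p-\frac1q$ gives the bound; John--Nirenberg on a ball $B\supset K_\varphi$ with $|B\setminus K_\varphi|\geq\frac12|B|$ controls the mean; and a diagonal extraction over an exhaustion, using equicontinuity in the $\mathrm{BMO}$ norm, produces the subsequence. The differences are cosmetic: you commute $M_{\overline{\varphi_2}}$ past $\mathcal{A}_\psi$ on $\mathrm{L}^p$, which forces the cutoff $\chi$ on $v_n$ (your Step 1 display is not literally meaningful over $\Rd$ without it, and one must check, again via the commutator lemma, that the limit does not depend on $\chi$), whereas the paper first moves $\mathcal{A}_\psi$ onto $\varphi_2 v_n$ by the adjoint and commutes on $\mathrm{L}^q$, which keeps $\varphi_1 u_n$ compactly supported so no cutoff is needed; you also bound $|\varphi_B|$ through the $\mathrm{L}^r$ oscillation rather than the $\mathrm{L}^1$ one.
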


\begin{proof}
Since $u_n \in \mathrm{L}^p(\Rd)$ and $v_n \in \mathrm{L}^q(\Rd)$, and the test functions are essentially bounded with compact support,
the products $\varphi_1 u_n$ and $\varphi_2 v_n$ remain in $\mathrm{L}^p(\Rd)$ and $\mathrm{L}^q(\Rd).$
By the H\"ormander-Mikhlin theorem, since $\psi \in \mathrm{C}^\kappa(\mathrm{S}^{d-1})$, the operator $\mathcal{A}_\psi$ is bounded on
$\mathrm{L}^p(\Rd)$. The adjoint of $\mathcal{A}_\psi$ is $\mathcal{A}_{\overline{\psi}}$, yielding the equivalence of the two integrals.

To show the limit depends only on the product $\varphi = \varphi_1 \overline{\varphi}_2$ and $\psi$, we shift the multiplier to the second
sequence via its adjoint. Because $\varphi_1 u_n \in \mathrm{L}^p(\Rd)$ and $\varphi_2 v_n \in \mathrm{L}^{p'}(\Rd)$ (due to its strict 
compact support and $q > p'$), the standard adjoint relation holds:
\begin{equation*}
\int_{\Rd} \mathcal{A}_\psi(\varphi_1 u_n) \overline{\varphi_2 v_n} \, d\mx = \int_{\Rd} \varphi_1 u_n \overline{\mathcal{A}_{\overline{\psi}}(\varphi_2 v_n)} \, d\mx
\end{equation*}
To isolate $\varphi_2$ from the adjoint multiplier, we invoke the definition of the commutator,
$\mathcal{A}_{\overline{\psi}}(\varphi_2 v_n) = [\mathcal{A}_{\overline{\psi}}, M_{\varphi_2}] v_n + \varphi_2 \mathcal{A}_{\overline{\psi}} v_n$.
Substituting its complex conjugate into the integral yields:
\begin{equation*}
\int_{\Rd} \varphi_1 u_n \overline{\mathcal{A}_{\overline{\psi}}(\varphi_2 v_n)} \, d\mx = \int_{\Rd} \varphi_1 u_n \overline{[\mathcal{A}_{\overline{\psi}}, M_{\varphi_2}] v_n} \, d\mx + \int_{\Rd} \varphi_1 \overline{\varphi}_2 u_n \overline{\mathcal{A}_{\overline{\psi}} v_n} \, d\mx
\end{equation*}

By Lemma \ref{1stcommlemma_Lp}, because $\varphi_2 \in \mathrm{L}^\infty(\Rd) \cap \mathrm{VMO}_c(\Rd)$, the commutator
$[\mathcal{A}_{\overline{\psi}}, M_{\varphi_2}]$ is a compact operator on $\mathrm{L}^q(\Rd)$.
Because $v_n \rightharpoonup 0$ weakly in $\mathrm{L}^q(\Rd)$, the compact operator maps it to a strongly converging sequence:
$[\mathcal{A}_{\overline{\psi}}, M_{\varphi_2}] v_n \to 0$ strongly in $\mathrm{L}^q(\Rd)$.
Because $\varphi_1$ has compact support, it is contained in some compact set $K_{\varphi_1}$.
Since $\frac{1}{p} + \frac{1}{q} < 1$, we apply H\"older's inequality strictly over $K_{\varphi_1}$ to the commutator term:
\begin{equation*}
\left| \int_{K_{\varphi_1}} \varphi_1 u_n \overline{[\mathcal{A}_{\overline{\psi}}, M_{\varphi_2}] v_n} \, d\mx \right| \leq \| \varphi_1 \|_{\mathrm{L}^\infty} \| u_n \|_{\mathrm{L}^p(\Rd)} \left\| [\mathcal{A}_{\overline{\psi}}, M_{\varphi_2}] v_n \right\|_{\mathrm{L}^q(\Rd)} |K_{\varphi_1}|^{1 - \frac{1}{p} - \frac{1}{q}}
\end{equation*}
Because the sequence $[\mathcal{A}_{\overline{\psi}}, M_{\varphi_2}] v_n \to 0$ strongly in $\mathrm{L}^q(\Rd)$ and $u_n$ is
uniformly bounded in $\mathrm{L}^p(\Rd)$, this integral vanishes as $n \to \infty$. 

The limit of the bilinear form reduces entirely to the action on the product $\varphi = \varphi_1 \overline{\varphi}_2$, allowing us to define $D(\varphi, \psi)$. 

To establish the bound on the functional, note that 
$\frac{1}{p} + \frac{1}{q} < 1$ implies there exists a finite exponent $r \in (1, \infty)$ such that $\frac{1}{r} + \frac{1}{p} + \frac{1}{q} = 1$.
The product $\varphi = \varphi_1 \overline{\varphi}_2$ has support restricted to a compact set $K_\varphi$.
Applying the generalized H\"older inequality over this support:
\begin{equation*}
\left| \int_{K_\varphi} \varphi u_n \overline{\mathcal{A}_{\overline{\psi}} v_n} \, d\mx \right| \leq \|\varphi\|_{\mathrm{L}^r(K_\varphi)} \|u_n\|_{\mathrm{L}^p(\Rd)} \|\mathcal{A}_{\overline{\psi}} v_n\|_{\mathrm{L}^q(\Rd)}
\end{equation*}
Using the H\"ormander-Mikhlin bound, $\|\mathcal{A}_{\overline{\psi}} v_n\|_{\mathrm{L}^q} \leq C_{d,q} \|\psi\|_{\mathrm{C}^\kappa} \|v_n\|_{\mathrm{L}^q}$.
It remains to control $\|\varphi\|_{\mathrm{L}^r(K_\varphi)}$ by the $\mathrm{BMO}$ norm. The John-Nirenberg inequality bounds only the oscillation about the mean,
so we first exploit the compact support of $\varphi$ to control the mean itself. Fix a ball $B \supset K_\varphi$ large enough that
$|B \setminus K_\varphi| \geq \tfrac{1}{2}|B|$. Since $\varphi \equiv 0$ on $B \setminus K_\varphi$, the definition of the $\mathrm{BMO}$ seminorm gives
\begin{equation*}
\frac{|B \setminus K_\varphi|}{|B|}\,|\varphi_B| = \frac{1}{|B|}\int_{B \setminus K_\varphi} |\varphi - \varphi_B|\, d\mx \leq \frac{1}{|B|}\int_B |\varphi - \varphi_B|\, d\mx \leq \|\varphi\|_{\mathrm{BMO}(\Rd)},
\end{equation*}
whence $|\varphi_B| \leq 2\,\|\varphi\|_{\mathrm{BMO}(\Rd)}$. Combining this with the John-Nirenberg inequality on $B$, which yields
$\|\varphi - \varphi_B\|_{\mathrm{L}^r(B)} \leq C_{d,r}\,|B|^{1/r}\,\|\varphi\|_{\mathrm{BMO}(\Rd)}$, we obtain
\begin{equation*}
\|\varphi\|_{\mathrm{L}^r(K_\varphi)} \leq \|\varphi - \varphi_B\|_{\mathrm{L}^r(B)} + |\varphi_B|\,|B|^{1/r} \leq C_{\mathrm{BMO}, K_\varphi, r}\,\|\varphi\|_{\mathrm{BMO}(\Rd)},
\end{equation*}
where the constant depends on $r$, the dimension, and the volume of the enclosing ball $B$ (hence on the diameter of $K_\varphi$; see Proposition \ref{prop:JN_constant}).
This step is where the compact support is essential: for a general $\mathrm{BMO}$ function the $\mathrm{L}^r$ norm is not controlled by the seminorm alone, since additive
constants are invisible to the latter.
Combining these yields the uniform bound:
\begin{equation*}
\left| \int_{K_\varphi} \varphi u_n \overline{\mathcal{A}_{\overline{\psi}} v_n} \, d\mx \right|
\leq \left( C_{d,q} C_{\mathrm{BMO}, K_\varphi, r} \sup_n \|u_n\|_{\mathrm{L}^p} \sup_n \|v_n\|_{\mathrm{L}^q} \right) \|\varphi\|_{\mathrm{BMO}(\Rd)} \|\psi\|_{\mathrm{C}^\kappa(\mathrm{S}^{d-1})}
\end{equation*}
By the structural properties of strict LF-spaces, this uniform localized bound is exactly the necessary and sufficient condition to guarantee that $D$ extends to a
jointly continuous bilinear functional on the global product topology (see Appendix \ref{sec:appendix_topology} for the formal topological proof).
Moreover, since this estimate is expressed through the $\mathrm{BMO}$ norm, which is a genuine norm on the Banach step $\mathrm{VMO}_{K_\varphi}(\Rd)$, and since
the bounded functions $\mathrm{L}^\infty(\Rd)\cap\mathrm{VMO}_{K_\varphi}(\Rd)$, on which $D$ was defined by the integral formula, are dense in $\mathrm{VMO}_{K_\varphi}(\Rd)$
in the $\mathrm{BMO}$ topology (Proposition \ref{prop:bounded_density}), the functional $D$ admits a unique jointly continuous extension to all of $\mathrm{VMO}_c(\Rd)$;
the extension carries no separate integral interpretation and is only ever evaluated on bounded functions in the sequel (cf.\ Remark \ref{rem:boundedness}).

To extract a subsequence that converges for all valid test functions, we proceed inductively over an exhausting sequence of compact sets $(K_m)_m$ for $\Rd$,
since the bounding constant $C_{K_m}$ depends on the support diameter.
For a fixed $m$, the established bound guarantees that the sequence of bilinear functionals is equicontinuous on
$\big(\mathrm{L}^\infty(\Rd)\cap\mathrm{VMO}_{K_m}(\Rd)\big) \times \mathrm{C}^\kappa(\mathrm{S}^{d-1})$.
Because the underlying LF-space $\mathrm{VMO}_c(\Rd)$ is separable (see Appendix \ref{sec:appendix_topology}), and its bounded subspace
$\mathrm{L}^\infty(\Rd)\cap\mathrm{VMO}_{K_1}(\Rd)$ is therefore separable in the $\mathrm{BMO}$ topology, we can choose dense countable
subsets in $\mathrm{L}^\infty(\Rd)\cap\mathrm{VMO}_{K_1}(\Rd)$ and the symbol space $\mathrm{C}^\kappa(\mathrm{S}^{d-1})$, applying the
Cantor diagonal procedure to extract a subsequence converging on the dense subsets.
By equicontinuity, it converges for all functions supported in $K_1$.
We then extract a sub-subsequence for $K_2$, extending the limit to $\mathrm{VMO}_{K_2}(\Rd)$, and proceed inductively.
The final diagonal subsequence converges for every $\varphi \in \mathrm{L}^\infty(\Rd)\cap\mathrm{VMO}_c(\Rd)$, since any such compactly
supported function eventually belongs to some $K_m$, completing the proof.
\end{proof}

\begin{remark}
We note that the H-distribution $D$ constructed in Theorem \ref{thm:h_dist_vmo} depends on the extracted subsequence $(u_n, v_n)$. As with
classical H-measures, the distribution is not uniquely determined by the original sequences unless those sequences possess a unique,
globally uniform microlocal energy density. Therefore, any physical conclusions or defect representations drawn from $D$ apply only to the
specific concentration and oscillation limits of the chosen subsequence.
\end{remark}

\begin{remark}\label{rem:prerequisites}
The functional-analytic bounds established in this theorem rely on three strict structural prerequisites:
\begin{itemize}
    \item \textbf{Compact Support:} Without restricting the test functions to $\mathrm{VMO}_c(\Rd)$, the $\mathrm{L}^p-\mathrm{L}^q$ product
    would be evaluated over the unbounded domain $\Rd$. Because $\frac{1}{p} + \frac{1}{q} < 1$, the product does not generally belong to
    $\mathrm{L}^1(\Rd)$. The compact support localizes the evaluation to a finite Lebesgue space $\mathrm{L}^r(K_\varphi)$, ensuring the integral is well-defined.
    \item \textbf{Strict Integrability Gap:} In the classical boundary case where $\frac{1}{p} + \frac{1}{q} = 1$, the test function $\varphi$
    must belong to $\mathrm{L}^\infty(\Rd)$ to satisfy integrability. Since the $\mathrm{L}^\infty$-norm cannot be controlled by the $\mathrm{BMO}$-norm,
    continuity is lost. The strict gap $\frac{1}{p} + \frac{1}{q} < 1$ permits the use of the localized John-Nirenberg inequality.
    \item \textbf{The LF-Space Topology:} The bounding constant $C_{K_\varphi}$ generated by the John-Nirenberg inequality grows explicitly
    with the diameter of the support. Thus, $D$ is not uniformly bounded with respect to the global $\mathrm{BMO}(\Rd)$ norm.
    However, because $\mathrm{VMO}_c(\Rd)$ is a strict LF-space, this localized boundedness is the exact necessary and sufficient condition for $D$
    to act as a continuous bilinear functional, mirroring classical Schwartz distributions.
\end{itemize}
\end{remark}

\begin{corollary}\label{cor:Lp_Linfty}
Let $p \in (1, \infty)$. Assume that $(u_n)_n$ is bounded in $\mathrm{L}^p(\Rd)$ and $u_n \rightharpoonup 0$ weakly in $\mathrm{L}^p(\Rd)$.
Assume further that $(v_n)_n$ is a sequence of uniformly compactly supported functions bounded in $\mathrm{L}^\infty(\Rd)$, and $v_n \rightharpoonup^\ast 0$
in the weak-$\ast$ topology of $\mathrm{L}^\infty(\Rd)$.
Then, passing to a subsequence, there exists a continuous bilinear functional $D$ on
$\big(\mathrm{L}^\infty(\Rd)\cap\mathrm{VMO}_c(\Rd)\big) \times \mathrm{C}^\kappa(\mathrm{S}^{d-1})$ satisfying the integral representations and bounds of Theorem \ref{thm:h_dist_vmo}.
\end{corollary}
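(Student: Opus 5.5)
The plan is to reduce the corollary to Theorem \ref{thm:h_dist_vmo} by choosing an exponent $q$ large enough to restore the strict integrability gap. Since the $v_n$ are uniformly supported in a fixed compact set $K_0$ and $\|v_n\|_{\mathrm{L}^\infty}\leq M$, we have
\[
\|v_n\|_{\mathrm{L}^q(\Rd)} \leq M\,|K_0|^{1/q}
\]
for every $q\in(1,\infty)$. We fix $q$ with $q>p'$, that is $\frac1p+\frac1q<1$; any $q$ large enough works because $p>1$. Then $(v_n)_n$ is bounded in $\mathrm{L}^q(\Rd)$.

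The first step is to check that $v_n\rightharpoonup 0$ weakly in $\mathrm{L}^q(\Rd)$. Take $g\in\mathrm{L}^{q'}(\Rd)$. Then $g\chi_{K_0}\in\mathrm{L}^1(\Rd)$, because $K_0$ has finite measure and $q'\ge 1$. Since every $v_n$ vanishes outside $K_0$,
\[
\int v_n\bar g = \int v_n\,\overline{g\chi_{K_0}} \to 0
\]
by the weak-$\ast$ convergence in $\mathrm{L}^\infty$ tested against the $\mathrm{L}^1$ function $g\chi_{K_0}$. The second step is to apply Theorem \ref{thm:h_dist_vmo} to the pair $(u_n)_n\subset\mathrm{L}^p$, $(v_n)_n\subset\mathrm{L}^q$. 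This gives, along a subsequence, the jointly continuous bilinear functional $D$ with the representation \eqref{eq:h_dist_def} and the bound \eqref{eq:h_dist_bound}. In the bound, $\sup_n\|v_n\|_{\mathrm{L}^q}$ is replaced by $M|K_0|^{1/q}$, so the constant $C_{K_\varphi}$ now also depends on $K_0$, $M$ and the chosen $q$.

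The remaining point, and the only one needing care, is that $D$ should not depend on the auxiliary choice of $q$. The defining integrals in \eqref{eq:h_dist_def} involve only $u_n$, $v_n$, $\varphi_j$ and $\psi$; none of them involves $q$. On the extracted subsequence the limits therefore coincide for any admissible $q$, so $D$ is intrinsic. I expect no real obstacle here. The only subtlety is that uniform compact support is essential: without it, $\mathrm{L}^\infty$-boundedness does not imply $\mathrm{L}^q$-boundedness, and weak-$\ast$ convergence does not upgrade to weak $\mathrm{L}^q$ convergence.
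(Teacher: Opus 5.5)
Your proposal is correct and follows the paper's proof: you use the uniform compact support to get $\mathrm{L}^q$-boundedness, choose $q$ large enough that $\frac1p+\frac1q<1$, and apply Theorem \ref{thm:h_dist_vmo}. Your explicit check that weak-$\ast$ convergence in $\mathrm{L}^\infty$ gives weak convergence in $\mathrm{L}^q$ (by testing against $g\chi_{K_0}\in\mathrm{L}^1$) fills in a hypothesis of the theorem that the paper's one-line proof leaves implicit.
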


\begin{proof}
Because $(v_n)_n$ has uniformly compact support and is bounded in $\mathrm{L}^\infty(\Rd)$, it is bounded in $\mathrm{L}^q(\Rd)$ for every finite $q \geq 1$.
For any fixed $p \in (1, \infty)$, we choose $q$ sufficiently large such that $\frac{1}{p} + \frac{1}{q} < 1$.
The existence of the H-distribution follows as a direct application of Theorem \ref{thm:h_dist_vmo}.
\end{proof}

We record three structural properties of the functional $D$ that will be used below and that clarify its nature as a microlocal object.
To display the dependence on the underlying sequences, we write $D_{u,v}$ for the H-distribution associated with the pair $(u_n)\subset\mathrm{L}^p(\Rd)$, $(v_n)\subset\mathrm{L}^q(\Rd)$.

\begin{proposition}[Hermitian symmetry]\label{prop:hermitian}
Let $D_{u,v}$ be as in Theorem \ref{thm:h_dist_vmo}, and let $D_{v,u}$ denote the H-distribution associated, along the same subsequence,
with the swapped pair $(v_n),(u_n)$ (which exists since the integrability condition $\frac1p+\frac1q<1$ is symmetric). Then, for all
$\varphi_1,\varphi_2\in\mathrm{L}^\infty(\Rd)\cap\mathrm{VMO}_c(\Rd)$ and $\psi\in\mathrm{C}^\kappa(\mathrm{S}^{d-1})$,
\begin{equation*}
\overline{D_{u,v}(\varphi_1\overline{\varphi_2},\psi)} = D_{v,u}(\varphi_2\overline{\varphi_1},\overline{\psi}).
\end{equation*}
In particular, in the diagonal case $u_n=v_n$ with $p=q$, the functional is Hermitian: $\overline{D(\varphi_1\overline{\varphi_2},\psi)} = D(\varphi_2\overline{\varphi_1},\overline{\psi})$.
\end{proposition}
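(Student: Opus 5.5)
The plan is to work directly from the integral representation \eqref{eq:h_dist_def}. Conjugate it and then use the adjointness of $\mathcal{A}_\psi$ and $\mathcal{A}_{\overline{\psi}}$, which was already used in the proof of Theorem \ref{thm:h_dist_vmo}. The first step is to note that $D_{v,u}$ exists along the same subsequence. Theorem \ref{thm:h_dist_vmo} applies to the pair $(v_n)\subset\mathrm{L}^q$, $(u_n)\subset\mathrm{L}^p$ because the hypothesis $\frac1q+\frac1p<1$ is symmetric. Running the diagonal extraction once more on the already extracted subsequence, we may assume both $D_{u,v}$ and $D_{v,u}$ are defined along a common subsequence. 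Passing to a further subsequence does not alter $D_{u,v}$, since its limits already exist.

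Fix $\varphi_1,\varphi_2\in\mathrm{L}^\infty(\Rd)\cap\mathrm{VMO}_c(\Rd)$ and $\psi\in\mathrm{C}^\kappa(\mathrm{S}^{d-1})$. By \eqref{eq:h_dist_def}, $D_{u,v}(\varphi_1\overline{\varphi_2},\psi)$ is the limit of $\int \mathcal{A}_\psi(\varphi_1 u_n)\overline{\varphi_2 v_n}\,d\mx$. Complex conjugation is continuous, so the conjugate of $D_{u,v}$ is the limit of
\begin{equation*}
\int_{\Rd} \varphi_2 v_n\,\overline{\mathcal{A}_\psi(\varphi_1 u_n)}\,d\mx .
\end{equation*}
Write $\psi=\overline{\overline{\psi}}$. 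This expression is then exactly the second integral in \eqref{eq:h_dist_def} written for the swapped pair $(v_n,u_n)$, with first test function $\varphi_2$, second test function $\varphi_1$, and symbol $\overline{\psi}$. Therefore it converges to $D_{v,u}(\varphi_2\overline{\varphi_1},\overline{\psi})$.

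Some care is needed so that the adjoint identity and the representation formula are legitimate for the swapped pair. Since $\overline{\psi}\in\mathrm{C}^\kappa(\mathrm{S}^{d-1})$, the operator $\mathcal{A}_{\overline{\psi}}$ is bounded by H\"ormander--Mikhlin. We have $\varphi_2 v_n\in\mathrm{L}^q$, and $\varphi_1 u_n\in\mathrm{L}^p\cap\mathrm{L}^{q'}$ because of its compact support and $p>q'$. The adjoint relation therefore holds exactly as in the proof of Theorem \ref{thm:h_dist_vmo}, with the roles of $p$ and $q$ interchanged.

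The diagonal case $u_n=v_n$, $p=q$ needs a separate remark. Here the condition $\frac2p<1$ forces $p>2$, and $D_{v,u}=D_{u,v}=D$, so the identity specialises immediately to the stated Hermitian property. The only point I expect to require care is the bookkeeping of which integral in \eqref{eq:h_dist_def} the conjugated expression matches. In particular, one must check that the symbol appearing in $D_{v,u}$ is $\overline{\psi}$ and not $\psi$. One must also confirm that the limit defining $D_{v,u}$ depends only on $\varphi_2\overline{\varphi_1}$, which is guaranteed by the commutator argument of Theorem \ref{thm:h_dist_vmo}.
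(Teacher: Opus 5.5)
Your proposal is correct and is essentially the paper's argument: you conjugate the defining limit and identify $\lim_n\int\varphi_2 v_n\overline{\mathcal{A}_\psi(\varphi_1 u_n)}\,d\mx$ as $D_{v,u}(\varphi_2\overline{\varphi_1},\overline{\psi})$. The paper gets there by applying the adjoint relation explicitly to reach the first integral form, whereas you read it off directly as the second form of \eqref{eq:h_dist_def}, which encodes the same adjoint identity; your extra diagonal extraction is harmless but unnecessary, since this computation already shows that the limit defining $D_{v,u}$ exists along the original subsequence.
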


\begin{proof}
By the definition of $D_{u,v}$ and conjugation of the integrals,
\begin{equation*}
\overline{D_{u,v}(\varphi_1\overline{\varphi_2},\psi)}
= \overline{\lim_n\int_{\Rd}\mathcal{A}_\psi(\varphi_1 u_n)\overline{\varphi_2 v_n}\, d\mx}
= \lim_n\int_{\Rd}\varphi_2 v_n\,\overline{\mathcal{A}_\psi(\varphi_1 u_n)}\, d\mx.
\end{equation*}
The adjoint of $\mathcal{A}_{\overline{\psi}}$ with respect to the sesquilinear $\mathrm{L}^2$ pairing is $\mathcal{A}_\psi$, so
$\int \mathcal{A}_{\overline{\psi}}(\varphi_2 v_n)\overline{\varphi_1 u_n}\, d\mx = \int \varphi_2 v_n\,\overline{\mathcal{A}_\psi(\varphi_1 u_n)}\, d\mx$;
the pairing is well-defined because $\varphi_2 v_n$ and $\varphi_1 u_n$ are compactly supported and $\frac1p+\frac1q<1$. Hence the right-hand side equals
$\lim_n\int \mathcal{A}_{\overline{\psi}}(\varphi_2 v_n)\overline{\varphi_1 u_n}\, d\mx = D_{v,u}(\varphi_2\overline{\varphi_1},\overline{\psi})$.
\end{proof}

\begin{proposition}[Non-negativity of the diagonal trace]\label{prop:diagonal_positive}
Let $p\in(2,\infty)$ and $q=p$, and let $(w_n)$ be bounded in $\mathrm{L}^p(\Rd)$ with $w_n\rightharpoonup 0$. Let $D$ be the H-distribution of
Theorem \ref{thm:h_dist_vmo} associated with the diagonal pair $u_n=v_n=w_n$. Then the identity-symbol trace $\varphi\mapsto D(\varphi,1)$ is a
non-negative functional: for every $\varphi\in\mathrm{L}^\infty(\Rd)\cap\mathrm{VMO}_c(\Rd)$ with $\varphi\geq 0$,
\begin{equation*}
D(\varphi,1)\geq 0.
\end{equation*}
Moreover $D(\varphi,1)=\int_{\Rd}\varphi\, h\, d\mx$, where $0\leq h\in\mathrm{L}^{p/2}_{loc}(\Rd)$ is the weak-$\mathrm{L}^{p/2}$ limit (along the subsequence)
of $|w_n|^2$. Thus the diagonal trace is represented by a non-negative defect density, recovering the classical H-measure defect within the $\mathrm{VMO}$ framework.
\end{proposition}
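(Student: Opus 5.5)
With the identity symbol $\psi\equiv 1$ the multiplier $\mathcal{A}_1$ is the identity operator. So the plan is to compute $D(\varphi,1)$ directly from the defining formula \eqref{eq:h_dist_def} and identify it with a weak limit of $|w_n|^2$.

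First, take $\varphi\in\mathrm{L}^\infty(\Rd)\cap\mathrm{VMO}_c(\Rd)$ and write it in factorised form $\varphi=\varphi_1\overline{\varphi_2}$ with $\varphi_1=\varphi$ and $\varphi_2=\chi$. Here $\chi\in\mathcal{D}(\Rd)$ is a real cutoff equal to $1$ on the support of $\varphi$, so $\chi\in\mathrm{L}^\infty\cap\mathrm{VMO}_c$. Theorem \ref{thm:h_dist_vmo} then gives
\begin{equation*}
D(\varphi,1)=\lim_n\int_{\Rd}\varphi\, w_n\,\overline{\chi w_n}\,d\mx=\lim_n\int_{\Rd}\varphi\,|w_n|^2\,d\mx .
\end{equation*}
The limit exists along the subsequence extracted in the theorem. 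In particular, if $\varphi\ge 0$, every integral in the sequence is non-negative, so $D(\varphi,1)\ge 0$ follows at once.

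Second, the representation. Since $p>2$, the sequence $(|w_n|^2)$ is bounded in $\mathrm{L}^{p/2}(\Rd)$ and $p/2\in(1,\infty)$. By reflexivity, a further subsequence satisfies $|w_n|^2\rightharpoonup h$ weakly in $\mathrm{L}^{p/2}$. The limit $h$ is non-negative, because the cone of non-negative functions is weakly closed: test against non-negative $\mathrm{L}^{(p/2)'}$ functions. Every $\varphi\in\mathrm{L}^\infty$ with compact support lies in $\mathrm{L}^{(p/2)'}$, hence $\int\varphi|w_n|^2\to\int\varphi h$. This limit must coincide with $D(\varphi,1)$, since the full sequence already converges to $D(\varphi,1)$ on the subsequence of the theorem and limits along sub-subsequences agree. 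This yields $D(\varphi,1)=\int\varphi h\,d\mx$. Moreover, $h$ is independent of the further extraction, because it is determined by its action on $\mathcal{D}(\Rd)$, where the limits are fixed. Hence the whole subsequence of the theorem converges weakly to $h$ in $\mathrm{L}^{p/2}_{loc}$, and indeed in $\mathrm{L}^{p/2}$.

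The only delicate points are bookkeeping ones. The first is to ensure the cutoff $\chi$ is an admissible test function and that the theorem's formula applies with $\psi\equiv1$; this is fine, since the constant $1$ lies in $\mathrm{C}^\kappa(\mathrm{S}^{d-1})$. The second is to note that $\mathrm{L}^\infty$ with compact support suffices for the weak-limit pairing, so no appeal to the continuous extension of $D$ beyond bounded functions is needed. I expect no genuine obstacle; the main care is the uniqueness of $h$ along the subsequence, which is handled by testing on $\mathcal{D}(\Rd)$ and using density in $\mathrm{L}^{(p/2)'}$.
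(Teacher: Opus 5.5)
Your proposal is correct and follows essentially the paper's own argument: $\mathcal{A}_1=\mathrm{Id}$ reduces $D(\varphi,1)$ to $\lim_n\int_{\Rd}\varphi\,|w_n|^2\,d\mx$, non-negativity is read off the integrands, and $h$ comes from the boundedness of $(|w_n|^2)$ in the reflexive space $\mathrm{L}^{p/2}(\Rd)$. The differences are cosmetic. You make explicit the factorisation $\varphi=\varphi\,\overline{\chi}$ with a cut-off $\chi\equiv 1$ on $\mathrm{supp}\,\varphi$, which the paper leaves implicit. You also identify $h$ by a sub-subsequence and uniqueness argument, whereas the paper infers the weak convergence directly from the existence of the limits on $\mathrm{C}_c^\infty(\Rd)$.
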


\begin{proof}
Since $\psi=1$ gives $\mathcal{A}_1=\mathrm{Id}$, Theorem \ref{thm:h_dist_vmo} yields
\begin{equation*}
D(\varphi,1)=\lim_n\int_{\Rd}\varphi\, w_n\overline{w_n}\, d\mx=\lim_n\int_{\Rd}\varphi\,|w_n|^2\, d\mx.
\end{equation*}
Because $\frac{2}{p}<1$, the sequence $(|w_n|^2)$ is bounded in the reflexive space $\mathrm{L}^{p/2}(\Rd)$. The above limit exists for every
$\varphi\in\mathrm{C}_c^\infty(\Rd)$, so $(|w_n|^2)$ converges weakly in $\mathrm{L}^{p/2}(\Rd)$ to a limit $h$, necessarily $h\geq 0$, with
$D(\varphi,1)=\int_{\Rd}\varphi\, h\, d\mx$. If $\varphi\geq 0$, each integrand $\varphi|w_n|^2$ is non-negative, whence $D(\varphi,1)\geq 0$.
\end{proof}

\begin{proposition}[Spatial locality]\label{prop:spatial_locality}
Let $D$ be as in Theorem \ref{thm:h_dist_vmo}, and let $\Omega\subseteq\Rd$ be open. If $u_n\to 0$ strongly in $\mathrm{L}^p_{loc}(\Omega)$, then
$D(\varphi_1\overline{\varphi_2},\psi)=0$ whenever $\varphi_1$ has compact support in $\Omega$; symmetrically, if $v_n\to 0$ strongly in
$\mathrm{L}^q_{loc}(\Omega)$, then $D(\varphi_1\overline{\varphi_2},\psi)=0$ whenever $\varphi_2$ has compact support in $\Omega$.
Consequently, the spatial support of $D$ is contained in the closed set on which neither sequence converges strongly to zero.
\end{proposition}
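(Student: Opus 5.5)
We use the two integral representations in \eqref{eq:h_dist_def} and apply H\"older's inequality over the compact support of the localized factor.

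\textbf{First claim.} Assume $u_n\to 0$ in $\mathrm{L}^p_{loc}(\Omega)$ and $\mathrm{supp}\,\varphi_1=K\subset\Omega$ compact. We use the second representation,
\begin{equation*}
D(\varphi_1\overline{\varphi_2},\psi)=\lim_n\int_{K}\varphi_1 u_n\,\overline{\mathcal{A}_{\overline{\psi}}(\varphi_2 v_n)}\,d\mx .
\end{equation*}
\begin{itemize}
\item The strict gap lets us choose $r\in(1,\infty)$ with $\frac1r+\frac1p+\frac1q=1$.
\item H\"older on $K$ bounds the integral by $\|\varphi_1\|_{\mathrm{L}^\infty}\,|K|^{1/r}\,\|u_n\|_{\mathrm{L}^p(K)}\,\|\mathcal{A}_{\overline{\psi}}(\varphi_2 v_n)\|_{\mathrm{L}^q(\Rd)}$.
\item By the H\"ormander--Mikhlin theorem, the last factor is at most $C_{d,q}\|\psi\|_{\mathrm{C}^\kappa}\|\varphi_2\|_{\mathrm{L}^\infty}\sup_n\|v_n\|_{\mathrm{L}^q}$, which is uniformly bounded.
\item Since $\|u_n\|_{\mathrm{L}^p(K)}\to0$, the limit is zero.
\end{itemize}
Using the full $\mathrm{L}^q$ norm in the third step means we never need the nonlocal operator $\mathcal{A}_{\overline{\psi}}$ to preserve supports.

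\textbf{Second claim.} Assume $v_n\to0$ in $\mathrm{L}^q_{loc}(\Omega)$ and $\mathrm{supp}\,\varphi_2=K\subset\Omega$. We use the first representation, $\int\mathcal{A}_\psi(\varphi_1u_n)\,\overline{\varphi_2v_n}\,d\mx$, whose integrand is supported in $K$. The same H\"older splitting gives the bound
\begin{equation*}
|K|^{1/r}\,\|\mathcal{A}_\psi(\varphi_1u_n)\|_{\mathrm{L}^p(\Rd)}\,\|\varphi_2\|_{\mathrm{L}^\infty}\,\|v_n\|_{\mathrm{L}^q(K)}.
\end{equation*}
The first norm is bounded by Mikhlin together with the $\mathrm{L}^p$ bound on $u_n$, and $\|v_n\|_{\mathrm{L}^q(K)}\to0$. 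Alternatively, Proposition \ref{prop:hermitian} transfers the first claim to the swapped pair.

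\textbf{Support statement.} Let $\Sigma$ be the set of points having no neighbourhood on which $u_n\to0$ or $v_n\to0$ strongly; its complement is open, so $\Sigma$ is closed. Take $\varphi$ with compact support in $\Rd\setminus\Sigma$.
\begin{itemize}
\item Cover $\mathrm{supp}\,\varphi$ by finitely many open balls $B_i$, each of which lies in a set where one of the sequences converges strongly.
\item Take a subordinate partition of unity $\chi_i\in\mathrm{C}_c^\infty(B_i)$ with $\sum_i\chi_i=1$ on $\mathrm{supp}\,\varphi$.
\item Write $\varphi=\sum_i(\chi_i\varphi)\cdot\overline{1_i}$, where $1_i\in\mathrm{C}_c^\infty(B_i)$ equals $1$ on $\mathrm{supp}\,\chi_i$. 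If $u_n$ is the sequence converging on $B_i$, put $\chi_i\varphi$ in the $\varphi_1$ slot; if it is $v_n$, swap the roles so that the $B_i$-supported factor sits in the $\varphi_2$ slot.
\item Each term vanishes by the claims above. By bilinearity and linearity in the first argument of $D$, $D(\varphi,\psi)=0$.
\end{itemize}

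The factors must be admissible test functions. This holds because the Banach algebra property gives $\chi_i\varphi\in\mathrm{L}^\infty\cap\mathrm{VMO}_c$, and smooth compactly supported functions lie in $\mathrm{VMO}_c$.

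The main obstacle is the bookkeeping of this last step: checking that every $\varphi$ of the form stated is a finite sum of admissible products, each with its localized factor in the slot matching the convergent sequence. The analytic estimates themselves are routine.
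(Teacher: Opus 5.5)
Your proof is correct and follows the paper's route: H\"older's inequality on the compact support of the localised factor, the H\"ormander--Mikhlin bound on the non-local factor, and strong convergence of the localised sequence. The only difference there is that you assign the two representations of \eqref{eq:h_dist_def} to the two claims the other way round, which is immaterial. Your partition-of-unity argument supplies the support statement that the paper only asserts. Note that the bookkeeping there is trivial, since both $\chi_i\varphi$ and $1_i$ are compactly supported in $B_i$, so either slot assignment works.
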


\begin{proof}
Suppose $\varphi_1$ has compact support $K\subset\Omega$ and $u_n\to 0$ in $\mathrm{L}^p(K)$. Then $\varphi_1 u_n\to 0$ strongly in $\mathrm{L}^p(\Rd)$,
and by the H\"ormander-Mikhlin bound $\mathcal{A}_\psi(\varphi_1 u_n)\to 0$ strongly in $\mathrm{L}^p(\Rd)$. Since $\varphi_2 v_n$ is bounded in
$\mathrm{L}^q(\Rd)$ with fixed compact support and $q>p'$, it is bounded in $\mathrm{L}^{p'}(\Rd)$, so H\"older's inequality gives
\begin{equation*}
\left|\int_{\Rd}\mathcal{A}_\psi(\varphi_1 u_n)\overline{\varphi_2 v_n}\, d\mx\right|
\leq \|\mathcal{A}_\psi(\varphi_1 u_n)\|_{\mathrm{L}^p(\Rd)}\,\|\varphi_2 v_n\|_{\mathrm{L}^{p'}(\Rd)}\longrightarrow 0.
\end{equation*}
Hence $D(\varphi_1\overline{\varphi_2},\psi)=0$. The case $v_n\to 0$ follows from the adjoint representation of Theorem \ref{thm:h_dist_vmo}.
\end{proof}

\section{The Localisation Principle}\label{sec:localisation}
We now present a localisation principle for H-distributions with test functions in $\mathrm{VMO}_c(\Rd)$.
Because the governing equations are assumed to be in divergence form, the partial derivatives can be shifted onto the momentum test sequence.
This allows the coefficients of the differential operator to reside in $\mathrm{VMO}(\Rd)$ without requiring them to possess weak derivatives.

\begin{theorem}\label{thm:localisation}
Let $p, q \in (1, \infty)$ such that $\frac{1}{p} + \frac{1}{q} < 1$.
Assume that $u_n \rightharpoonup 0$ weakly in $\mathrm{L}^p(\Rd)$ and $v_n \rightharpoonup 0$ weakly in $\mathrm{L}^q(\Rd)$.
Furthermore, assume that $(u_n)$ satisfies the equation:
\begin{equation*}
\sum_{j=1}^d \partial_j(a_j u_n) = f_n
\end{equation*}
where $f_n \to 0$ strongly in $\mathrm{W}^{-1, p}(\Rd)$, and the coefficients satisfy $a_j \in \mathrm{L}^\infty(\Rd) \cap \mathrm{VMO}(\Rd)$.
Then, the corresponding H-distribution $D$ satisfies:
\begin{equation}\label{eq:localisation}
\sum_{j=1}^d D(a_j \varphi, \kappa_j \psi) = 0
\end{equation}
for all $\varphi \in \mathrm{L}^\infty(\Rd) \cap \mathrm{VMO}_c(\Rd)$ and $\psi \in \mathrm{C}^\kappa(\mathrm{S}^{d-1})$, where $\kappa_j(\mxi) = \frac{\xi_j}{|\mxi|}$.
\end{theorem}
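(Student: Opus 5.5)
The approach is to test the equation against a Riesz-potential-type sequence built from $v_n$, in the spirit of \cite{AM, MM}. Fix $\varphi\in\mathrm{L}^\infty(\Rd)\cap\mathrm{VMO}_c(\Rd)$ and $\psi\in\mathrm{C}^\kappa(\mathrm{S}^{d-1})$, and choose a cutoff $\theta\in\mathrm{C}^\infty_c(\Rd)$ with $\theta\equiv1$ on a neighbourhood of $\mathrm{supp}\,\varphi$. The testing functions are
\begin{equation*}
z_n = \mathcal{I}_1\mathcal{A}_{\overline{\psi}}(\overline{\varphi}\, v_n),
\end{equation*}
where $\mathcal{I}_1$ is the operator with symbol $|\mxi|^{-1}$, so that $\partial_j z_n = -\mathrm{i}\,\mathcal{A}_{\kappa_j\overline{\psi}}(\overline{\varphi}v_n)$ up to the sign convention of the Fourier transform. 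We then pair the equation $\sum_j\partial_j(a_ju_n)=f_n$ with $\theta z_n$. The left-hand side becomes $-\sum_j\int a_ju_n\,\overline{\partial_j(\theta z_n)}\,d\mx$.

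The key steps are as follows.
\begin{enumerate}
\item \textbf{Estimates on $z_n$.} Since $\overline{\varphi}v_n$ is bounded in $\mathrm{L}^q$ with fixed compact support, it is also bounded in $\mathrm{L}^{p'}$, because $q>p'$. Hence $\nabla z_n$ is bounded in $\mathrm{L}^{p'}$ by H\"ormander--Mikhlin.
\item \textbf{Local compactness of $z_n$.} Since $\overline{\varphi}v_n\rightharpoonup0$, the functions $\theta z_n$ are bounded in $\mathrm{W}^{1,p'}$ and converge to $0$ strongly in $\mathrm{L}^{p'}$, by Rellich on $\mathrm{supp}\,\theta$ and the local boundedness of the Riesz potential.
\item \textbf{Right-hand side.} Consequently $\langle f_n,\theta z_n\rangle\to0$, because $f_n\to0$ in $\mathrm{W}^{-1,p}=(\mathrm{W}^{1,p'})'$ and $\theta z_n$ is bounded in $\mathrm{W}^{1,p'}$.
\item \textbf{Left-hand side.} We write $\partial_j(\theta z_n)=\theta\,\partial_jz_n+(\partial_j\theta)z_n$. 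The second term pairs with $a_ju_n$, which is bounded in $\mathrm{L}^p$, against $(\partial_j\theta)z_n\to0$ in $\mathrm{L}^{p'}$, so it vanishes in the limit. What remains is
\begin{equation*}
\lim_n\sum_j\int \theta a_j u_n\,\overline{\mathcal{A}_{\kappa_j\overline{\psi}}(\overline{\varphi}v_n)}\,d\mx .
\end{equation*}
\item \textbf{Identification with $D$.} By the second representation in \eqref{eq:h_dist_def}, with $\varphi_1=\theta a_j\in\mathrm{L}^\infty\cap\mathrm{VMO}_c$ (using the Banach algebra property, and noting that $\kappa_j\psi$ is real-multiplied, so $\overline{\kappa_j\overline\psi}=\kappa_j\psi$) and $\varphi_2=\overline{\varphi}$, each limit equals $D(\theta a_j\varphi,\kappa_j\psi)=D(a_j\varphi,\kappa_j\psi)$. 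The constant factor $\mathrm{i}$ is harmless, so the sum vanishes and \eqref{eq:localisation} follows.
\end{enumerate}

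The main obstacle is the low-frequency part of $\mathcal{I}_1$. The symbol $|\mxi|^{-1}$ is singular at the origin, so $z_n$ need not lie in $\mathrm{L}^{p'}(\Rd)$ globally. This is exactly why the paper mentions a canonical Riesz-potential testing sequence adapted to unbounded domains. I would first try splitting $\mathcal{I}_1=\mathcal{I}_1\chi(D)+\mathcal{I}_1(1-\chi(D))$ with $\chi$ a smooth bump near $\mxi=0$.
\begin{itemize}
\item The high-frequency part is a smoothing operator of order $-1$, so it is compact from compactly supported $\mathrm{L}^{p'}$ data into $\mathrm{L}^{p'}_{loc}$.
\item The low-frequency part has a smooth, locally bounded kernel of polynomial growth at most. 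Applied to $\overline\varphi v_n\rightharpoonup0$ with fixed support, it converges to zero locally uniformly together with its derivatives.
\end{itemize}
The correction $\mathcal{A}_{\kappa_j\overline\psi\chi}$ appearing in $\nabla z_n$ is then a compact perturbation, because its symbol has compact support. It therefore drops out of the limit, although this requires checking that the non-homogeneous symbol is handled by the same compactness argument rather than by $D$ itself.

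Routine points remain. One is the density or approximation of $\theta a_j\varphi$, which is needed only if $\varphi$ is not already bounded, and it is bounded here. The other is the Hölder bookkeeping, which uses $1/p+1/q<1$ on $\mathrm{supp}\,\theta$ as in the proof of Theorem \ref{thm:h_dist_vmo}. Neither should cause difficulty.
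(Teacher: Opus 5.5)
Your proposal is correct and follows the paper's proof essentially step for step. You test the equation against the cut-off Riesz-potential sequence $\theta\,\mathcal{I}_1\mathcal{A}_{\overline{\psi}}(\overline{\varphi}v_n)$, discard the $(\partial_j\theta)$-term by Rellich--Kondrachov, and identify the remaining term with $D(\theta a_j\varphi,\kappa_j\psi)=D(a_j\varphi,\kappa_j\psi)$ through the second representation in \eqref{eq:h_dist_def}. The differences are cosmetic: you build $\overline{\varphi}$ into the test sequence where the paper uses $\varphi$ and conjugates at the end. Your low-frequency splitting is also more cautious than needed, since $\partial_j z_n$ is exactly a homogeneous order-zero multiplier of $\overline{\varphi}v_n$, and for $d\geq 2$ the kernel $|\mx|^{1-d}$ already gives $z_n\in\mathrm{W}^{1,q}_{loc}(\Rd)$ for compactly supported data, which is all the cut-off $\theta$ requires.
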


\begin{proof}
Let $\varphi \in \mathrm{L}^\infty(\Rd) \cap \mathrm{VMO}_c(\Rd)$ and $\psi \in \mathrm{C}^\kappa(\mathrm{S}^{d-1})$.
To test the differential equation, we introduce a spatial cut-off function $\chi \in \mathrm{C}_c^\infty(\Rd)$ such that $\chi \equiv 1$ on the compact support of $\varphi$.
We construct a testing sequence $w_n$ by applying the Riesz potential of order one:
\begin{equation*}
w_n = \chi \mathcal{A}_{i \overline{\psi}} (-\Delta)^{-1/2} (\varphi v_n)
\end{equation*}
Since $(v_n)$ is bounded in $\mathrm{L}^q(\Rd)$ and $\varphi$ is essentially bounded with compact support, $\varphi v_n$
is a bounded sequence in $\mathrm{L}^q(\Rd)$ with compact support.
The Riesz potential $(-\Delta)^{-1/2}$ maps $\mathrm{L}^q_c(\Rd)$ continuously into $\mathrm{W}^{1, q}_{loc}(\Rd)$,
and the multiplier $\mathcal{A}_{i \overline{\psi}}$ preserves this local regularity.
Because $\frac{1}{p} + \frac{1}{q} < 1$, we have $q > p'$.
The local Sobolev space $\mathrm{W}^{1, q}_{loc}(\Rd)$ embeds continuously into $\mathrm{W}^{1, p'}_{loc}(\Rd)$.
By multiplying with the smooth, compactly supported cut-off function $\chi$, we obtain $w_n \in \mathrm{W}^{1, p'}(\Rd)$.
Thus, $(w_n)$ is uniformly bounded in the dual space of the differential constraint.
Since $f_n \to 0$ strongly in $\mathrm{W}^{-1, p}(\Rd)$, the global dual pairing vanishes in the limit:
\begin{equation*}
\lim_{n \to \infty} \auf f_n, \overline{w_n} \zu = 0
\end{equation*}

Expanding the left-hand side of the differential equation via integration by parts, we shift the derivative onto the testing sequence:
\begin{equation*}
\auf \sum_{j=1}^d \partial_j(a_j u_n), \overline{w_n} \zu = - \sum_{j=1}^d \int_{\Rd} a_j u_n \overline{\partial_j w_n} \, d\mx
\end{equation*}
Applying the product rule, the derivative splits as
$\partial_j w_n = (\partial_j \chi) \mathcal{A}_{i \overline{\psi}} (-\Delta)^{-1/2} (\varphi v_n) + \chi \mathcal{A}_{i \overline{\psi}} \partial_j (-\Delta)^{-1/2} (\varphi v_n)$.
The symbol of $\partial_j (-\Delta)^{-1/2}$ is exactly $\frac{2\pi i \xi_j}{2\pi|\mxi|} = \frac{i \xi_j}{|\mxi|}$.
Thus, the second term simplifies to:
\begin{equation*}
\chi \mathcal{A}_{i \overline{\psi}} \partial_j (-\Delta)^{-1/2} (\varphi v_n) = \chi \mathcal{A}_{-\kappa_j \overline{\psi}} (\varphi v_n)
\end{equation*}
When paired with $a_j u_n$, this term evaluates to:
\begin{equation*}
- \sum_{j=1}^d \int_{\Rd} a_j \chi u_n \overline{\mathcal{A}_{-\kappa_j \overline{\psi}} (\varphi v_n)} \, d\mx = \sum_{j=1}^d \int_{\Rd} (a_j \chi) u_n \overline{\mathcal{A}_{\kappa_j \overline{\psi}} (\varphi v_n)} \, d\mx
\end{equation*}
We stress that global integrability of this pairing is not automatic: under the strict gap $\frac{1}{p} + \frac{1}{q} < 1$
the product $u_n \overline{\mathcal{A}_{\kappa_j\overline\psi}(\varphi v_n)}$ lies only in $\mathrm{L}^s(\Rd)$ with $\frac{1}{s} = \frac{1}{p}+\frac{1}{q} < 1$,
which is not integrable over the unbounded domain $\Rd$. Convergence is secured precisely by the cut-off $\chi$, which localizes the coefficient to
$a_j\chi \in \mathrm{L}^\infty(\Rd)\cap\mathrm{VMO}_c(\Rd)$ before the non-local operator $\mathcal{A}_{\kappa_j\overline\psi}$ is applied,
restricting the effective domain of integration to $\mathrm{supp}\,\chi$.
Taking the limit as $n \to \infty$, this yields $D(a_j \chi \overline{\varphi}, \kappa_j \psi)$.
Since $\chi \equiv 1$ on the support of $\varphi$, we have $\chi \overline{\varphi} = \overline{\varphi}$, reducing this term to $D(a_j \overline{\varphi}, \kappa_j \psi)$.
It remains to show that the integral corresponding to the first term, $(\partial_j \chi) \mathcal{A}_{i \overline{\psi}} (-\Delta)^{-1/2} (\varphi v_n)$, vanishes.
Let $S(\varphi v_n) = \mathcal{A}_{i \overline{\psi}} (-\Delta)^{-1/2} (\varphi v_n)$. This sequence is bounded and weakly null in $\mathrm{W}^{1, q}_{loc}(\Rd)$.
Because $\frac{1}{p} + \frac{1}{q} < 1$, we have $q > p'$, which ensures that the local Rellich-Kondrachov embedding
$\mathrm{W}^{1, q}(K) \hookrightarrow\hookrightarrow \mathrm{L}^{p'}(K)$ is compact on any bounded domain $K$.
Since the multiplier $(\partial_j \chi)$ is strictly supported on a compact set, the pointwise product $(\partial_j \chi) S(\varphi v_n)$
converges strongly to zero in $\mathrm{L}^{p'}(\Rd)$.
Paired with the $\mathrm{L}^p$-bounded sequence $a_j u_n$, this term vanishes in the limit.
Since $\mathrm{VMO}(\Rd)$ is stable under complex conjugation, we can replace $\overline{\varphi}$ with $\varphi$, completing the proof.
\end{proof}

\section{The Macroscopic Defect Representation}\label{sec:defect_rep}
In physical applications, macroscopic observables such as energy dissipation, transport rates, and acoustic scattering are mathematically
represented by the weak limit of variable bilinear forms. To evaluate these observables without relying on unphysical uniform continuity
assumptions, we must link the macroscopic limit of the sequence to the microlocal H-distribution.
Let $Q \in \mathrm{L}^\infty(\Rd) \cap \mathrm{VMO}(\Rd)$ be a variable material coefficient, and define the sesquilinear form $q(\mx; \lambda, \eta) := Q(\mx)\lambda\overline{\eta}$.
The following theorem shows that the macroscopic defect of this sesquilinear form is quantified by the trace of the H-distribution.
Because the H-distribution is jointly continuous on the product topology, the projective tensor product factorization of
Proposition \ref{prop:operator_rep} shows that it acts as a continuous linear operator from the spatial test space into the space of
spherical distributions, mapping the geometric location of trapped energy to its directional scattering signature;
Corollary \ref{cor:angular_defect} below makes this operator-theoretic reading explicit and identifies the scalar defect of
Theorem \ref{thm:defect_rep} as its constant-symbol component.

\begin{theorem}\label{thm:defect_rep}
Let $p, q \in (1, \infty)$ such that $\frac{1}{p} + \frac{1}{q} < 1$. Assume that $(u_n)$ is bounded in $\mathrm{L}^p(\Rd)$ and
$u_n \rightharpoonup u$ weakly. Assume that $(v_n)$ is bounded in $\mathrm{L}^q(\Rd)$ and $v_n \rightharpoonup v$ weakly.
If $q(\mx; u_n, v_n) \to \omega$ in the sense of distributions $\mathcal{D}'(\Rd)$, and $D$ is the H-distribution associated
with the purely oscillatory components $(u_n - u)$ and $(v_n - v)$, then the macroscopic defect is exactly quantified by the
H-distribution acting on any test function $\phi \in \mathrm{C}_c^\infty(\Rd)$ as:
\begin{equation}\label{eq:defect_rep}
\auf \omega - q(\mx; u, v), \phi \zu = D(Q\phi, 1).
\end{equation}
\end{theorem}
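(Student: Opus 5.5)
The plan is to split each sequence into its weak limit plus an oscillatory part, expand the sesquilinear form accordingly, and identify each piece in the limit. I write $\tilde u_n := u_n - u$ and $\tilde v_n := v_n - v$. Then $\tilde u_n \rightharpoonup 0$ weakly in $\mathrm{L}^p(\Rd)$ and $\tilde v_n \rightharpoonup 0$ weakly in $\mathrm{L}^q(\Rd)$, and both sequences remain bounded. Along the subsequence fixed in Theorem \ref{thm:h_dist_vmo}, the H-distribution $D$ of the pair $(\tilde u_n,\tilde v_n)$ exists. For fixed $\phi\in\mathrm{C}_c^\infty(\Rd)$ I expand
\begin{equation*}
\int_{\Rd} Q\phi\, u_n\overline{v_n}\, d\mx = \int Q\phi\, u\overline{v} + \int Q\phi\, \tilde u_n\overline{v} + \int Q\phi\, u\overline{\tilde v_n} + \int Q\phi\, \tilde u_n\overline{\tilde v_n}.
\end{equation*}
By assumption the left-hand side converges to $\auf\omega,\phi\zu$. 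All four integrals are finite: $Q\phi$ is bounded with compact support $K$, and $\frac1p+\frac1q<1$ lets Hölder's inequality be applied on $K$, exactly as in the proof of Theorem \ref{thm:h_dist_vmo}.

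Next I show that the two cross terms vanish in the limit. Since $Q\phi\,\overline{v}$ is supported in $K$ and lies in $\mathrm{L}^q(K)\subset\mathrm{L}^{p'}(K)$ (because $q>p'$), it belongs to $\mathrm{L}^{p'}(\Rd)=(\mathrm{L}^p(\Rd))'$. Weak convergence $\tilde u_n\rightharpoonup 0$ then gives $\int Q\phi\,\tilde u_n\overline v\to 0$. The symmetric argument, with $Q\phi\, u\in\mathrm{L}^{q'}(\Rd)$ since $p>q'$, disposes of the third term.

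For the last term I use the Banach algebra property from Section \ref{sec:notation}. Since $Q\in\mathrm{L}^\infty\cap\mathrm{VMO}(\Rd)$ and $\phi\in\mathrm{C}_c^\infty(\Rd)\subset \mathrm{L}^\infty\cap\mathrm{VMO}_c(\Rd)$, the product $Q\phi$ lies in $\mathrm{L}^\infty(\Rd)\cap\mathrm{VMO}_c(\Rd)$. I would then apply \eqref{eq:h_dist_def} with $\varphi_1=Q\phi$, with $\varphi_2=\chi$ a real cutoff in $\mathrm{C}_c^\infty(\Rd)$ satisfying $\chi\equiv1$ on $K$, and with $\psi=1$, so that $\mathcal{A}_1=\mathrm{Id}$. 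This gives
\begin{equation*}
\lim_n\int Q\phi\,\tilde u_n\,\overline{\chi\tilde v_n}\, d\mx = D(Q\phi\,\chi,1)=D(Q\phi,1),
\end{equation*}
and the left-hand side equals $\lim_n\int Q\phi\,\tilde u_n\overline{\tilde v_n}$ because $\chi\equiv1$ on $\mathrm{supp}(Q\phi)$. Combining the four limits yields $\auf\omega,\phi\zu = \int Q\phi\, u\overline v + D(Q\phi,1)$, which is \eqref{eq:defect_rep}.

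The only delicate point is this last identification. I must confirm that the test function $Q\phi$ genuinely lies in the class on which $D$ is defined by the integral formula, rather than merely in its continuous extension; the Banach algebra bound guarantees this. I must also confirm that the factorization $\varphi_1\overline{\varphi_2}$ with a smooth cutoff is admissible. Everything else reduces to the duality pairings made possible by the strict gap $\frac1p+\frac1q<1$.
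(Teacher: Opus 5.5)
Your proposal is correct and follows the paper's proof essentially step for step. You use the same centering decomposition into four terms, you kill the two cross terms by $\mathrm{L}^p$--$\mathrm{L}^{p'}$ and $\mathrm{L}^q$--$\mathrm{L}^{q'}$ duality (using $q>p'$ and $p>q'$ on the compact support), and you identify the purely oscillatory term with $D(Q\phi,1)$ at $\psi=1$. Your factorization $\varphi_1=Q\phi$, $\varphi_2=\chi$, with a real cutoff $\chi\equiv 1$ on $\mathrm{supp}(Q\phi)$, only makes explicit the identification the paper reads off directly from the product form of \eqref{eq:h_dist_def}.
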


\begin{proof}
Let $\phi \in \mathrm{L}^\infty(\Rd) \cap \mathrm{VMO}_c(\Rd)$ be a test function. Because $Q \in \mathrm{L}^\infty(\Rd) \cap \mathrm{VMO}(\Rd)$,
the product $Q\phi$ inherits both compact support and $\mathrm{VMO}$ regularity, acting as a valid spatial multiplier.
We evaluate the distributional limit of the sesquilinear form:
\begin{equation*}
\omega(\phi) = \lim_{n \to \infty} \int_{\Rd} Q(\mx) \phi(\mx) u_n \overline{v_n} \, d\mx.
\end{equation*}
We decompose the sequence algebraically by centering around the weak limits:
\begin{align*}
\int_{\Rd} Q\phi u_n \overline{v_n} \, d\mx &= \int_{\Rd} Q\phi (u_n - u + u)\overline{(v_n - v + v)} \, d\mx \\
&= \int_{\Rd} Q\phi (u_n - u)\overline{(v_n - v)} \, d\mx + \int_{\Rd} Q\phi (u_n - u)\overline{v} \, d\mx \\
&\quad + \int_{\Rd} Q\phi u \overline{(v_n - v)} \, d\mx + \int_{\Rd} Q\phi u \overline{v} \, d\mx.
\end{align*}

Because the spatial test function $\phi \in \mathrm{VMO}_c(\Rd)$ inherently possesses compact support, the evaluation domain is
restricted to $K_\phi = \mathrm{supp}\,\phi$. The strict integrability gap $\frac{1}{p} + \frac{1}{q} < 1$ implies $q > p'$,
which guarantees the continuous local Lebesgue embedding $\mathrm{L}^q(K_\phi) \hookrightarrow \mathrm{L}^{p'}(K_\phi)$.
Consequently, the fixed multiplier functions $Q\phi \overline{v}$ and $Q\phi u$ belong to the dual spaces $\mathrm{L}^{p'}(\Rd)$
and $\mathrm{L}^{q'}(\Rd)$, respectively. Thus, the mixed cross-terms represent valid dual pairings against weakly null sequences
and vanish entirely in the limit:
\begin{equation*}
\lim_{n \to \infty} \int_{\Rd} Q\phi (u_n - u)\overline{v} \, d\mx = 0, \quad \lim_{n \to \infty} \int_{\Rd} Q\phi u \overline{(v_n - v)} \, d\mx = 0.
\end{equation*}

Rearranging the surviving terms isolates the definition of the H-distribution evaluated at the identity momentum symbol $\psi = 1$:
\begin{equation*}
\omega(\phi) - \int_{\Rd} q(\mx; u, v) \phi \, d\mx = \lim_{n \to \infty} \int_{\Rd} Q\phi (u_n - u)\overline{(v_n - v)} \, d\mx = D(Q\phi, 1).
\end{equation*}
Since this holds for any valid test function, the defect representation is established in $\mathcal{D}'(\Rd)$.
\end{proof}

The defect representation of Theorem \ref{thm:defect_rep} evaluates the H-distribution only at the constant momentum symbol $\psi = 1$.
The operator representation of Proposition \ref{prop:operator_rep} shows that this scalar defect is the constant-symbol component of a
richer object: the full angular defect distribution carried by the entire symbol slot. We record this explicitly, as it is the point at
which the projective tensor product structure of Appendix \ref{sec:appendix_topology} is used rather than merely descriptive.

\begin{corollary}[Angular defect distribution]\label{cor:angular_defect}
Under the hypotheses of Theorem \ref{thm:defect_rep}, fix $Q \in \mathrm{L}^\infty(\Rd) \cap \mathrm{VMO}(\Rd)$. Then the map
\begin{equation*}
\phi \longmapsto T_{Q\phi} := D(Q\phi, \cdot)
\end{equation*}
is a continuous linear operator from $\mathrm{L}^\infty(\Rd) \cap \mathrm{VMO}_c(\Rd)$ into the space of spherical distributions
$\big(\mathrm{C}^\kappa(\mathrm{S}^{d-1})\big)'$ of order $\kappa$, and the scalar macroscopic defect \eqref{eq:defect_rep} of
Theorem \ref{thm:defect_rep} is recovered as the pairing of $T_{Q\phi}$ against the constant symbol $\mathbf{1} \in \mathrm{C}^\kappa(\mathrm{S}^{d-1})$:
\begin{equation}\label{eq:angular}
\auf \omega - q(\mx; u, v), \phi \zu = \auf T_{Q\phi}, \mathbf{1} \zu = D(Q\phi, 1).
\end{equation}
Thus $T_{Q\phi}$ resolves the directional (angular) structure of the macroscopic defect at the spatial location selected by $\phi$,
of which the scalar defect $D(Q\phi,1)$ is the zeroth angular moment.
\end{corollary}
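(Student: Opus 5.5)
The argument has three parts: linearity of $\phi\mapsto T_{Q\phi}$, continuity into $\big(\mathrm{C}^\kappa(\mathrm{S}^{d-1})\big)'$, and the identification \eqref{eq:angular}. The first two come from the bilinear structure and the localized bound \eqref{eq:h_dist_bound} of Theorem \ref{thm:h_dist_vmo}. The third is a rewriting of Theorem \ref{thm:defect_rep}.

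First, for fixed $\phi\in\mathrm{L}^\infty(\Rd)\cap\mathrm{VMO}_c(\Rd)$, the Banach-algebra property recorded in Section \ref{sec:notation} gives $Q\phi\in\mathrm{L}^\infty(\Rd)\cap\mathrm{VMO}_c(\Rd)$, with support inside $K_\phi=\mathrm{supp}\,\phi$. So $D(Q\phi,\cdot)$ is defined. It is linear in $\psi$ by bilinearity of $D$, and \eqref{eq:h_dist_bound} gives
\begin{equation*}
|T_{Q\phi}(\psi)| \leq C_{K_\phi}\,\|Q\phi\|_{\mathrm{BMO}(\Rd)}\,\|\psi\|_{\mathrm{C}^\kappa(\mathrm{S}^{d-1})}.
\end{equation*}
Hence $T_{Q\phi}$ is a continuous linear functional on $\mathrm{C}^\kappa(\mathrm{S}^{d-1})$, that is, a spherical distribution of order $\kappa$. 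Linearity of $\phi\mapsto T_{Q\phi}$ follows from linearity of $\phi\mapsto Q\phi$ and bilinearity of $D$.

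Second, for continuity of the operator, use the universal property of the strict LF-space (Remark \ref{rem:prerequisites}). It suffices to prove boundedness on each step $\mathrm{L}^\infty\cap\mathrm{VMO}_{K_m}(\Rd)$, with $(\mathrm{C}^\kappa)'$ carrying its dual norm. For $\mathrm{supp}\,\phi\subseteq K_m$, the algebra bound gives
\begin{equation*}
\|T_{Q\phi}\|_{(\mathrm{C}^\kappa)'} \leq C_{K_m}\,C\big(\|Q\|_{\mathrm{L}^\infty}\|\phi\|_{\mathrm{BMO}}+\|\phi\|_{\mathrm{L}^\infty}\|Q\|_{\mathrm{BMO}}\big).
\end{equation*}

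This is the main obstacle. The right-hand side involves $\|\phi\|_{\mathrm{L}^\infty}$, which the $\mathrm{BMO}$-based topology does not control (Remark \ref{rem:boundedness}). I would handle it in one of two ways:
\begin{itemize}
\item Interpret continuity with respect to the natural topology on the subclass $\mathrm{L}^\infty\cap\mathrm{VMO}_{K_m}$, given by the norm $\|\cdot\|_{\mathrm{L}^\infty}+\|\cdot\|_{\mathrm{BMO}}$. The displayed bound then gives continuity directly.
\item Alternatively, avoid the $\mathrm{L}^\infty$ norm by invoking the operator representation of Proposition \ref{prop:operator_rep}. Joint continuity of $D$ on the product factors through the projective tensor product, so $\varphi\mapsto D(\varphi,\cdot)$ is continuous from the LF test space into $(\mathrm{C}^\kappa)'$. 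Composing with the multiplication $M_Q$ then requires continuity of $M_Q$ on the relevant test space, which again uses the algebra bound.
\end{itemize}
I would state the result with the first interpretation, since it is the one compatible with how $D$ is actually evaluated in the paper.

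Finally, for \eqref{eq:angular}, take $\phi\in\mathrm{C}_c^\infty(\Rd)\subset\mathrm{L}^\infty\cap\mathrm{VMO}_c$. Then $\langle T_{Q\phi},\mathbf 1\rangle=D(Q\phi,1)$ by definition of $T_{Q\phi}$. Theorem \ref{thm:defect_rep} identifies this quantity with $\langle\omega-q(\mx;u,v),\phi\rangle$. The constant function $\mathbf 1$ lies in $\mathrm{C}^\kappa(\mathrm{S}^{d-1})$, and $\mathcal{A}_{\mathbf 1}=\mathrm{Id}$, which justifies reading $D(Q\phi,1)$ as the zeroth angular moment of $T_{Q\phi}$.
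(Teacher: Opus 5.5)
\textbf{Gap: continuity is proved in the wrong topology.} In the paper, $\mathrm{L}^\infty(\Rd)\cap\mathrm{VMO}_c(\Rd)$ carries the $\mathrm{BMO}$-based inductive-limit topology. Boundedness is imposed only as a set restriction: Section \ref{sec:notation} and Remark \ref{rem:boundedness} say explicitly that $\mathrm{L}^\infty$ never enters the topology. That is the topology in which the corollary asserts continuity. Your preferred resolution re-norms each step by $\|\cdot\|_{\mathrm{L}^\infty}+\|\cdot\|_{\mathrm{BMO}}$. This proves continuity for a strictly finer topology on the domain, hence a weaker statement. The retreat is also unnecessary. The term $\|\phi\|_{\mathrm{L}^\infty}\|Q\|_{\mathrm{BMO}}$ is an artefact of passing through $\|Q\phi\|_{\mathrm{BMO}(\Rd)}$. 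Before any $\mathrm{BMO}$ norm appears, the H\"older step in the proof of Theorem \ref{thm:h_dist_vmo} already gives
\begin{equation*}
|D(\varphi,\psi)|\leq C_{d,q}\,\sup_n\|u_n\|_{\mathrm{L}^p}\,\sup_n\|v_n\|_{\mathrm{L}^q}\,\|\varphi\|_{\mathrm{L}^r(K_\varphi)}\,\|\psi\|_{\mathrm{C}^\kappa(\mathrm{S}^{d-1})},\qquad \tfrac1r=1-\tfrac1p-\tfrac1q .
\end{equation*}
The bound \eqref{eq:h_dist_bound} is derived from this only afterwards, via Proposition \ref{prop:JN_constant}. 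Applying it to $\varphi=Q\phi$ with $\mathrm{supp}\,\phi\subseteq K$ yields
\begin{equation*}
\|T_{Q\phi}\|_{(\mathrm{C}^\kappa(\mathrm{S}^{d-1}))'}\leq C\,\|Q\|_{\mathrm{L}^\infty}\,\|\phi\|_{\mathrm{L}^r(K)}\leq C\,C_d\,r\,(\operatorname{diam}K)^{d/r}\,\|Q\|_{\mathrm{L}^\infty}\,\|\phi\|_{\mathrm{BMO}(\Rd)} .
\end{equation*}
This is a bound by the step norm alone, so your own reduction to the steps gives continuity in the intended topology. $Q$ enters only through $\|Q\|_{\mathrm{L}^\infty}$. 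Its $\mathrm{VMO}$ regularity is needed only so that $Q\phi$ lies in the class on which $D$ is defined by the limit formula.

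\textbf{Route (b) is the paper's proof, and it fails.} You were right to distrust route (b), and it is exactly the paper's proof. The paper infers from the algebra property that $\phi\mapsto Q\phi$ is continuous into the test space, then composes with the partial evaluation map of Proposition \ref{prop:operator_rep}. That inference fails for the reason you give. The conclusion is in fact false for general $Q$: a pointwise multiplier of $\mathrm{BMO}$ must have mean oscillation $O(1/\log(1/r))$ on small balls, and a general $\mathrm{VMO}$ function need not.
\begin{itemize}
\item Take $\chi\in\mathrm{C}_c^\infty(\Rd)$ with $0\leq\chi\leq 1$, $\chi\equiv 1$ near $0$ and support in $B(0,1/5)$.
\item Set $Q=\chi\cos\big((\log(1/|\mx|))^{1/2}\big)\in\mathrm{L}^\infty(\Rd)\cap\mathrm{VMO}(\Rd)$ and $\phi_N=\min(\chi\log(1/|\mx|),N)\in\mathrm{C}_c(\Rd)$.
\item Since $\chi\log(1/|\mx|)\in\mathrm{BMO}(\Rd)$ and truncation at most doubles the mean oscillation \eqref{eq:trunc_osc}, we have $\sup_N\|\phi_N\|_{\mathrm{BMO}}<\infty$.
\item On $B=B(0,s)$, the mean of $\phi_N$ is at least $\log(1/s)$ once $N\geq\log(1/s)$.
\item The mean oscillation of $Q$ on $B$ is $\gtrsim(\log(1/s))^{-1/2}$ along $\log(1/s_k)=(\tfrac{\pi}{2}+2\pi k)^2$.
\item Writing $Q\phi_N=(\phi_N)_B\,Q+Q\big(\phi_N-(\phi_N)_B\big)$ then gives $\|Q\phi_N\|_{\mathrm{BMO}}\gtrsim(\log(1/s_k))^{1/2}-C$, which is unbounded.
\end{itemize}
So the $\mathrm{L}^r$ estimate above is what actually justifies the corollary, for your argument and the paper's alike. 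The rest of your proposal is fine and matches the paper: each $T_{Q\phi}$ is a spherical distribution of order $\kappa$, and \eqref{eq:angular} holds for $\phi\in\mathrm{C}_c^\infty(\Rd)$.
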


\begin{proof}
By Theorem \ref{thm:h_dist_vmo} the functional $D$ is jointly continuous on
$\big(\mathrm{L}^\infty(\Rd) \cap \mathrm{VMO}_c(\Rd)\big) \times \mathrm{C}^\kappa(\mathrm{S}^{d-1})$, and the Banach algebra property of
$\mathrm{L}^\infty(\Rd) \cap \mathrm{VMO}(\Rd)$ (Section \ref{sec:main_result}) guarantees $Q\phi \in \mathrm{L}^\infty(\Rd) \cap \mathrm{VMO}_c(\Rd)$,
so $\phi \mapsto Q\phi$ is continuous into the test space. Composing with the partial evaluation operator $\varphi \mapsto D(\varphi, \cdot)$
of Proposition \ref{prop:operator_rep}, which is a continuous linear map into $\big(\mathrm{C}^\kappa(\mathrm{S}^{d-1})\big)'$, shows that
$\phi \mapsto T_{Q\phi}$ is a continuous linear operator into the spherical distributions of order $\kappa$. Evaluating the spherical
distribution $T_{Q\phi}$ against the constant symbol $\mathbf{1}$ gives $\auf T_{Q\phi}, \mathbf{1} \zu = D(Q\phi, 1)$, which by
Theorem \ref{thm:defect_rep} equals $\auf \omega - q(\mx; u, v), \phi \zu$.
\end{proof} \cite{Murat, Tartar79}: when the constraint forces the localized trace of the H-distribution to vanish, the bilinear
product passes to the limit despite the absence of strong convergence. Within the $\mathrm{VMO}$ framework this mechanism is entirely
geometric, the constraint acting solely through the Localisation Principle.

\begin{corollary}\label{cor:comp_comp}
Assume the sequences $(u_n)$ and $(v_n)$ from Theorem \ref{thm:defect_rep} satisfy the single divergence-form constraint
\begin{equation*}
\sum_{j=1}^d \partial_j(a_j u_n) = f_n \to 0 \quad \text{in } \mathrm{W}^{-1,p}(\Rd), \qquad a_j \in \mathrm{L}^\infty(\Rd) \cap \mathrm{VMO}(\Rd).
\end{equation*}
Then, by the Localisation Principle (Theorem \ref{thm:localisation}), the corresponding H-distribution satisfies
\begin{equation*}
\sum_{j=1}^d D(a_j \varphi, \kappa_j \psi) = 0 \qquad \text{for all } \varphi \in \mathrm{L}^\infty(\Rd) \cap \mathrm{VMO}_c(\Rd),\ \psi \in \mathrm{C}^\kappa(\mathrm{S}^{d-1}),
\end{equation*}
which confines the defect to the characteristic variety $\sum_{j=1}^d a_j(\mx)\xi_j = 0$. If this identity forces the localized trace
to vanish, $D(Q\phi, 1) = 0$ for all $\phi \in \mathrm{C}_c^\infty(\Rd)$, then the sequence exhibits compensated compactness, and the
macroscopic product passes to the limit exactly:
\begin{equation*}
\lim_{n \to \infty} q(\mx; u_n, v_n) = q(\mx; u, v) \quad \text{in } \mathcal{D}'(\Rd).
\end{equation*}
\end{corollary}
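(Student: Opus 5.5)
The statement is essentially a conditional consequence of Theorem \ref{thm:defect_rep} combined with Theorem \ref{thm:localisation}, so the plan is to chain these two results.

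First we verify the hypotheses of the Localisation Principle for the oscillatory parts. Theorem \ref{thm:localisation} is stated for weakly null sequences, whereas here $u_n\rightharpoonup u$. We therefore pass to $\tilde u_n=u_n-u$ and $\tilde v_n=v_n-v$, which are the sequences generating $D$ in Theorem \ref{thm:defect_rep}. Since $\partial_j(a_j u_n)\to\sum_j\partial_j(a_j u)$ weakly in $\mathrm{W}^{-1,p}(\Rd)$ (multiplication by $a_j\in\mathrm{L}^\infty$ and $\partial_j$ are continuous, hence weakly continuous) and $f_n\to 0$ strongly, the limit satisfies $\sum_j\partial_j(a_j u)=0$. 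Hence $\tilde u_n$ satisfies the same constraint with the same right-hand side $f_n$. This is the only point needing care. Theorem \ref{thm:localisation} then applies to the pair $(\tilde u_n,\tilde v_n)$ along the subsequence defining $D$, and yields the displayed identity for all $\varphi\in\mathrm{L}^\infty\cap\mathrm{VMO}_c$ and $\psi\in\mathrm{C}^\kappa(\mathrm{S}^{d-1})$. The phrase ``confines the defect to the characteristic variety'' is interpretive: formally, the spherical distribution $\sum_j T_{a_j\varphi}\,\kappa_j$ (in the notation of Corollary \ref{cor:angular_defect}) vanishes. I would state it only in this weak sense, without claiming a pointwise support statement for $D$.

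Second, under the additional hypothesis $D(Q\phi,1)=0$ for all $\phi\in\mathrm{C}_c^\infty(\Rd)$, we invoke \eqref{eq:defect_rep}, which gives $\langle\omega-q(\mx;u,v),\phi\rangle=0$ for all such $\phi$, i.e.\ $\omega=q(\mx;u,v)$ in $\mathcal{D}'(\Rd)$.

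Third, we must identify this with $\lim_n q(\mx;u_n,v_n)$ for the whole sequence rather than a subsequence. Theorem \ref{thm:defect_rep} already assumes that $q(\mx;u_n,v_n)\to\omega$ in $\mathcal{D}'$. Every subsequence therefore has the same limit, and the subsequence extraction used to build $D$ is harmless: $\omega$ is computed along it, yet equals the limit of the full sequence.

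The main obstacle is the shift to centered sequences in the first step, specifically confirming that the weak limit $u$ satisfies the homogeneous constraint. I would argue this via testing against $\mathrm{W}^{1,p'}$ functions, using the weak convergence $a_ju_n\rightharpoonup a_ju$ in $\mathrm{L}^p$. The remaining steps are bookkeeping.
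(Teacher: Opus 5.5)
Your proposal is correct and follows the same route as the paper. The paper gives no separate argument: it chains the Localisation Principle (Theorem \ref{thm:localisation}) with the defect representation \eqref{eq:defect_rep}, and uses the assumed convergence $q(\mathbf{x};u_n,v_n)\to\omega$ of the full sequence. Your check that $\sum_j\partial_j(a_ju)=0$ fills a step the paper leaves implicit: it shows that $u_n-u$ satisfies the same constraint with the same $f_n$, so Theorem \ref{thm:localisation}, which is stated for weakly null sequences, genuinely applies to the $D$ built from $(u_n-u,\,v_n-v)$.
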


\section{Canonical Choices for Testing Sequences}\label{sec:canonical}
In the classical $\mathrm{L}^p-\mathrm{L}^{p'}$ framework of H-distributions, given a weakly converging sequence $u_n \in \mathrm{L}^p(\Rd)$,
a standard choice for the testing sequence is generated pointwise via the Nemytskii operator: $v_n = f(u_n)$.
Because the exponents are conjugate, standard growth bounds imply $v_n \in \mathrm{L}^{p'}(\Rd)$.
The introduction of $\mathrm{VMO}$ test functions alters the admissible test spaces.
Because our construction requires $\frac{1}{p} + \frac{1}{q} < 1$, standard Nemytskii operators may map $v_n$ outside the target space $\mathrm{L}^q(\Rd)$.
By Corollary \ref{cor:Lp_Linfty}, we can define three alternatives for generating a testing sequence $v_n$ related to the primary sequence $u_n$.

\subsection{The Localized Sub-Critical Nemytskii Operator}\label{subsec:subcrit}
We can construct a testing sequence $v_n$ pointwise from $u_n$ via a continuous function $f: \mathbb{R} \to \mathbb{R}$.
However, because the sequences reside in Lebesgue spaces over the unbounded domain $\Rd$, the lack of global decay at infinity restricts standard mappings.
To map $\mathrm{L}^p(\Rd)$ into $\mathrm{L}^q(\Rd)$ globally over this infinite-measure domain, the classical superposition (Nemytskii) theorem requires,
for continuous $f$ with $f(0)=0$, the two-sided scaling bound $|f(s)| \leq C|s|^{p/q}$ to hold for \emph{all} $s \in \mathbb{R}$: the exponent $p/q$
simultaneously caps the growth as $|s| \to \infty$ and forces decay at least as fast as $|s|^{p/q}$ as $s \to 0$, the latter being what controls
integrability on the tails of $\Rd$ where $u_n$ is small.
In many physical applications, the governing non-linearities exhibit strictly sub-critical polynomial growth $|f(s)| \leq C|s|^\gamma$ where $\gamma \leq p/q$.
Because sub-critical functions decay too slowly for $|s| < 1$, they fail to preserve global integrability on the tails of $\Rd$.
To use sub-critical operators within the H-distribution framework, we introduce a spatial cut-off $\chi \in \mathrm{C}_c^\infty(\Rd)$.
Let $K = \mathrm{supp}\,\chi$. Because $K$ is bounded, the local Lebesgue embedding $\mathrm{L}^p(K) \hookrightarrow \mathrm{L}^{\gamma q}(K)$ holds continuously.
We define the testing sequence:
\begin{equation*}
v_n = \chi(\mx) f(u_n)
\end{equation*}
The cut-off ensures that $v_n$ belongs to $\mathrm{L}^q_c(\Rd)$, which fits the $\mathrm{L}^p-\mathrm{L}^q$ framework without requiring artificial critical-growth bounds.

\subsection{The Localized Bounded Transformation}
By Corollary \ref{cor:Lp_Linfty}, we can bypass algebraic growth restrictions by pairing a bounded nonlinearity with a spatial cut-off.
This approach is particularly suitable for measuring the local phase behavior of highly oscillatory sequences in fluid dynamics and transport theory.
Let $\chi \in \mathrm{C}_c^\infty(\Rd)$ be a fixed spatial cut-off function, and let $g: \mathbb{R} \to \mathbb{R}$ be a globally bounded continuous function.
We define the localized testing sequence:
\begin{equation*}
v_n = \chi(\mx) g(u_n)
\end{equation*}

Because $g$ is bounded, $v_n$ is uniformly bounded in $\mathrm{L}^\infty(\Rd)$.
Because of $\chi$, it has uniformly compact support and satisfies the prerequisites of the Macroscopic Defect Representation (Theorem \ref{thm:defect_rep}).
By the DiPerna-Lions renormalization theory, composing a sequence with a globally bounded transformation preserves its characteristic geometry,
allowing this choice to translate linear PDE constraints directly into the H-distribution framework.

\subsection{The Zero-Order Phase-Shift Sequence}
Because the H-distribution framework natively evaluates non-local pseudo-differential operators, the testing sequence $v_n$ can be constructed via a zero-order Fourier multiplier.
Unlike fractional integration, zero-order multipliers do not confer a gain in fractional Sobolev regularity, ensuring the generated sequence
does not inadvertently cross the threshold into local strong compactness.
Assuming the primary sequence is bounded in $\mathrm{L}^p(\Rd)$ for $p > 2$, we define a symmetric integrability pairing $q = p$,
which natively satisfies the strict integrability gap $\frac{1}{p} + \frac{1}{q} = \frac{2}{p} < 1$.
Let $m \in \mathrm{C}^\kappa(\mathrm{S}^{d-1})$ be a zero-order symbol. Applying a spatial cut-off $\chi \in \mathrm{C}_c^\infty(\Rd)$,
we construct the sequence:
\begin{equation*}
v_n = \chi \cdot \mathcal{A}_m(u_n)
\end{equation*}

By the classical Calder\'on-Zygmund theorem, $\mathcal{A}_m$ acts continuously on $\mathrm{L}^p(\Rd)$.
Therefore, $v_n$ is bounded in $\mathrm{L}^p_c(\Rd)$, which matches the required target space.
This non-local choice is applicable when analyzing the cross-energy, wave scattering, or vorticity interactions between a highly
oscillatory field and its orthogonal harmonic projection.

\begin{remark}
In the canonical constructions above, the introduction of the artificial spatial cut-off $\chi \in \mathrm{C}_c^\infty(\Rd)$ is
a mathematical necessity to satisfy the global Lebesgue integrability requirements. However, the resulting macroscopic defect
representation is physically consistent and strictly independent of the specific choice of $\chi$. Provided that $\chi \equiv 1$
on the compact support of the macroscopic test function $\phi$, the H-distribution evaluates only the local microlocal oscillations.
The artificial algebraic tails of $\chi$ are paired against zero within the dual evaluation, rendering the cut-off entirely invisible
to the physical defect measure.
\end{remark}

\subsection{A Unified Macroscopic Defect Theorem}\label{subsec:unified}
The three canonical constructions above, and the three physical applications developed in the sections that follow, share a single
logical skeleton: a testing sequence is produced from $(u_n)$ by an admissible operator, shown to land in $\mathrm{L}^q_c(\Rd)$,
and then fed into the Macroscopic Defect Representation and the Localisation Principle. We isolate this common core once, so that
each subsequent application reduces to the verification specific to its testing operator; the individual physical settings are
then presented as worked examples that retain their modelling content while invoking the abstract result below.

\begin{definition}\label{def:admissible}
Let $q \in (1, \infty)$. A sequence $(v_n)$ is an \emph{admissible testing sequence} in $\mathrm{L}^q_c(\Rd)$ if there exists a
single compact set $K \subset \Rd$ with $\mathrm{supp}\,v_n \subseteq K$ for all $n$, the sequence is uniformly bounded in
$\mathrm{L}^q(\Rd)$, and $v_n \rightharpoonup v$ weakly in $\mathrm{L}^q(\Rd)$.
\end{definition}

\begin{theorem}\label{thm:unified_defect}
Let $p, q \in (1, \infty)$ such that $\frac{1}{p} + \frac{1}{q} < 1$.
Assume that $(u_n)$ is bounded in $\mathrm{L}^p(\Rd)$ and $u_n \rightharpoonup u$ weakly, and satisfies the divergence-form constraint
\begin{equation*}
\sum_{j=1}^d \partial_j(a_j u_n) = f_n \to 0 \quad \text{in } \mathrm{W}^{-1, p}(\Rd), \qquad a_j \in \mathrm{L}^\infty(\Rd) \cap \mathrm{VMO}(\Rd).
\end{equation*}
Let $(v_n)$ be any admissible testing sequence in $\mathrm{L}^q_c(\Rd)$ with weak limit $v$, and let $D$ be the H-distribution
associated, along a subsequence, with the oscillatory components $(u_n - u)$ and $(v_n - v)$. Then for every coefficient
$Q \in \mathrm{L}^\infty(\Rd) \cap \mathrm{VMO}(\Rd)$ such that $Q(\mx) u_n \overline{v_n} \to \omega$ in the sense of distributions $\mathcal{D}'(\Rd)$:
\begin{itemize}
\item[(i)] the macroscopic defect is exactly quantified by the H-distribution, acting on any test function $\phi \in \mathrm{C}_c^\infty(\Rd)$ as
\begin{equation*}
\auf \omega - Q(\mx) u \overline{v}, \phi \zu = D(Q\phi, 1);
\end{equation*}
\item[(ii)] the defect is geometrically trapped on the characteristic variety $\sum_{j=1}^d a_j(\mx)\xi_j = 0$, in the sense that
\begin{equation*}
\sum_{j=1}^d D(a_j \varphi, \kappa_j \psi) = 0
\end{equation*}
for all $\varphi \in \mathrm{L}^\infty(\Rd) \cap \mathrm{VMO}_c(\Rd)$ and $\psi \in \mathrm{C}^\kappa(\mathrm{S}^{d-1})$.
\end{itemize}
\end{theorem}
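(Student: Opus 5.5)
The plan is to obtain both items by reduction to results already established, after first checking that the hypotheses of Theorem \ref{thm:defect_rep} and Theorem \ref{thm:localisation} hold for the oscillatory pair $(u_n-u, v_n-v)$.

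\textbf{Step 1: checking the hypotheses.} The sequence $(u_n-u)$ is bounded in $\mathrm{L}^p(\Rd)$ and weakly null. By Definition \ref{def:admissible}, $(v_n-v)$ is bounded in $\mathrm{L}^q(\Rd)$ and weakly null. The weak limit $v$ is supported in the same compact $K$, since weak convergence preserves the vanishing on $\Rd\setminus K$. So $(v_n - v)$ is again uniformly compactly supported. Theorem \ref{thm:h_dist_vmo} then applies under the gap $\frac1p+\frac1q<1$ and provides the subsequence and the H-distribution $D$. The convergence $Q u_n\overline{v_n}\to\omega$ in $\mathcal{D}'(\Rd)$ survives passage to this subsequence.

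\textbf{Step 2: item (i).} This is Theorem \ref{thm:defect_rep} applied verbatim with $q(\mx;\lambda,\eta)=Q(\mx)\lambda\overline\eta$. Its proof uses only boundedness, weak convergence, and the Banach algebra property giving $Q\phi\in\mathrm{L}^\infty(\Rd)\cap\mathrm{VMO}_c(\Rd)$. The cross terms vanish because $Q\phi\overline v\in\mathrm{L}^{p'}$ and $Q\phi u\in\mathrm{L}^{q'}$, using the compact support of $\phi$ and $q>p'$. The only thing to confirm is that $\phi\in\mathrm{C}_c^\infty(\Rd)$ is an admissible spatial test function, which holds because $\mathrm{C}_c^\infty\subset\mathrm{L}^\infty\cap\mathrm{VMO}_c$.

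\textbf{Step 3: item (ii), the constraint on the oscillatory part.} I would apply Theorem \ref{thm:localisation} to $(u_n-u)$ and $(v_n-v)$. The constraint needs to be transferred to the oscillation. The operator $u\mapsto\sum_j\partial_j(a_ju)$ is continuous and linear from $\mathrm{L}^p$ into $\mathrm{W}^{-1,p}$, because $a_j\in\mathrm{L}^\infty$. It is therefore weak–weak continuous, so $\sum_j\partial_j(a_ju_n)\rightharpoonup\sum_j\partial_j(a_ju)$. Since the left-hand side equals $f_n\to0$ strongly, the limit satisfies $\sum_j\partial_j(a_ju)=0$. Subtracting, $\sum_j\partial_j(a_j(u_n-u))=f_n\to0$ in $\mathrm{W}^{-1,p}(\Rd)$. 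Theorem \ref{thm:localisation} then gives (ii) for the same $D$. Its proof tests only against $v_n$ through $\varphi v_n$, so it works along the already extracted subsequence.

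\textbf{Main obstacle.} The delicate point is consistency of subsequences. Items (i) and (ii) must refer to one and the same $D$. Because both theorems rest on the limits defining $D$ in Theorem \ref{thm:h_dist_vmo}, which exist along the single extracted subsequence for all test pairs, this is handled by fixing that subsequence once at the start. A secondary check is that $\omega$ is unaffected by the subsequence, which holds because distributional limits are inherited by subsequences.
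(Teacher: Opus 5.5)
Your proposal is correct and follows the paper's route: you obtain (i) from Theorem~\ref{thm:defect_rep} and (ii) from Theorem~\ref{thm:localisation}, with the subsequence fixed once so that both items concern the same $D$. Your Step~3 is a useful addition. There you show $\sum_j\partial_j(a_j u)=0$ by weak continuity of $u\mapsto\sum_j\partial_j(a_ju)$ from $\mathrm{L}^p$ to $\mathrm{W}^{-1,p}$, so that the constraint passes to $u_n-u$; this makes explicit a point that the paper's ``applies verbatim'' leaves implicit, since the Localisation Principle is stated only for weakly null sequences.
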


\begin{proof}
Because $(v_n)$ is an admissible testing sequence, it is uniformly bounded in $\mathrm{L}^q(\Rd)$ with support in a fixed
compact set and $v_n \rightharpoonup v$; together with the strict gap $\frac{1}{p} + \frac{1}{q} < 1$, the pair $(u_n), (v_n)$
satisfies the hypotheses of the Macroscopic Defect Representation (Theorem \ref{thm:defect_rep}), which yields (i).
Because $(u_n)$ satisfies the divergence-form constraint with coefficients $a_j \in \mathrm{L}^\infty(\Rd) \cap \mathrm{VMO}(\Rd)$,
the Localisation Principle (Theorem \ref{thm:localisation}) applies verbatim and yields (ii).
\end{proof}

\section{Application: Stratified Transport in VMO Environments}\label{sec:transport}
To illustrate the scalar $\mathrm{VMO}$ H-distribution framework in a different setting, we consider stationary advection
and reaction in highly heterogeneous porous media or stratified fluids.

\subsection{Physical Motivation and the VMO Topology}
In geophysical fluid dynamics, the transport of a scalar quantity, such as a chemical pollutant, heat density, or fluid saturation,
through a sedimentary rock formation is governed by a first-order transport equation.
If the material is highly stratified, such as sandstone layered with shale, the background velocity field
$\mathbf{a}(\mx) = (a_1(\mx), \dots, a_d(\mx))$ varies strongly across the distinct geological strata.
Classical transport theory relies on Lipschitz, or at least uniformly continuous ($\mathrm{C}^0$), velocity fields to
define stable characteristic curves and extract uniform bounds (cf.\ \cite{DiPernaLions}).
While the geological interfaces between distinct rock strata do possess a microscopic transition zone where physical
properties change continuously, this zone is very thin.
Consequently, the classical modulus of continuity for the velocity field is poor.
Any mathematical estimates or compactness arguments relying on classical uniform continuity blow up as the transition
thickness approaches zero.
This is where the functional space $\mathrm{L}^\infty(\Rd; \Rd) \cap \mathrm{VMO}(\Rd; \Rd)$ becomes a natural modeling
choice; indeed, $\mathrm{VMO}$ is the natural regularity threshold at which second-order elliptic problems with rough
coefficients, in both nondivergence and divergence form, retain their $\mathrm{L}^p$ estimates \cite{CFF, DiFazio}.
The $\mathrm{L}^\infty$ restriction enforces the physical reality that the fluid velocity cannot diverge to infinity.
The $\mathrm{VMO}$ regularity accommodates the steepness of the boundary transitions.
Within a single homogeneous stratum, the velocity is steady, yielding zero mean oscillation.
Across the thin geological boundary layers, the velocity transitions rapidly.
However, because $\mathrm{VMO}$-based estimates (such as the John-Nirenberg bounds used in our H-distribution construction)
depend only on the $\mathrm{BMO}$ seminorm and its vanishing modulus, they are independent of the gradient or the classical
modulus of continuity of the coefficient.
This allows the extraction of uniform limits even when the macroscopic velocity field exhibits sharp, near-discontinuous
transitions between strata, while remaining, as a $\mathrm{VMO}$ field, of vanishing mean oscillation and hence free of genuine jumps.
Furthermore, $\mathrm{VMO}$ excludes chaotic, fractal ``white noise'' heterogeneities at all microscopic scales, reflecting
that geological formations, no matter how stratified, are composed of distinct, locally ordered domains.
In this framework, let the scalar sequence $(u_n)$ represent highly oscillatory approximations of the pollutant concentration.
The conservation of mass dictates the linear differential constraint $\sum_{j=1}^d \partial_j(a_j u_n) = f_n$, where $f_n \to 0$ strongly in $\mathrm{W}^{-1,p}(\Rd)$.
Suppose the pollutant undergoes a chemical reaction or is absorbed by the porous medium.
To prevent unphysical infinite reaction rates, the induced reaction is modeled by a globally bounded, continuous, monotonically increasing scalar function $g(u_n)$.
The distributional limit $\omega$ of the scalar product $u_n g(u_n)$ thus represents the macroscopic,
observable reaction rate, ready to be analyzed to determine the exact geometric support of its defect.

\subsection{Characteristic Support of the Transport Defect}
Rather than relying on algebraic monotonicity, we use the H-distribution framework to determine the geometric structure of the macroscopic reaction defect.
By the classical DiPerna-Lions renormalization theory \cite{DiPernaLions}, if the primary sequence $(u_n)$ satisfies the transport equation,
the bounded composition $v_n = g(u_n)$ satisfies the identical differential constraint.

\begin{theorem}\label{thm:transport_app}
Let $p, q \in (1, \infty)$ such that $\frac{1}{p} + \frac{1}{q} < 1$.
Assume $(u_n)$ is bounded in $\mathrm{L}^p(\Rd)$ and $u_n \rightharpoonup u$ weakly.
Let $g$ be a globally bounded, continuous function, and define the localized testing sequence $v_n = \chi(\mx) g(u_n)$
for a non-negative cut-off $\chi \in \mathrm{C}_c^\infty(\Rd)$, with $v_n \rightharpoonup v$ weakly in $\mathrm{L}^q(\Rd)$.

Assume the primary sequence satisfies the transport constraint:
\begin{equation*}
\sum_{j=1}^d \partial_j(a_j u_n) = f_n \to 0 \quad \text{in } \mathrm{W}^{-1, p}(\Rd)
\end{equation*}
where the velocity components satisfy $a_j \in \mathrm{L}^\infty(\Rd) \cap \mathrm{VMO}(\Rd)$.

For any coefficient $Q \in \mathrm{L}^\infty(\Rd) \cap \mathrm{VMO}(\Rd)$, if the macroscopic reaction sequence
$Q(\mx) u_n v_n \to \omega$ in the sense of distributions $\mathcal{D}'(\Rd)$, then the macroscopic defect is exactly
quantified by the H-distribution acting on any test function $\phi \in \mathrm{C}_c^\infty(\Rd)$ as:
\begin{equation*}
\auf \omega - Q(\mx) u v, \phi \zu = D(Q\phi, 1).
\end{equation*}
Furthermore, this defect measure is geometrically trapped, strictly supported on the characteristic variety of the fluid flow:
\begin{equation*}
\sum_{j=1}^d D(a_j \varphi, \kappa_j \psi) = 0
\end{equation*}
for all $\varphi \in \mathrm{L}^\infty(\Rd) \cap \mathrm{VMO}_c(\Rd)$ and $\psi \in \mathrm{C}^\kappa(\mathrm{S}^{d-1})$.
\end{theorem}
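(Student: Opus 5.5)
The plan is to reduce Theorem \ref{thm:transport_app} to the Unified Macroscopic Defect Theorem (Theorem \ref{thm:unified_defect}). The only work specific to this setting is checking that the localized bounded transformation $v_n=\chi g(u_n)$ is an admissible testing sequence in the sense of Definition \ref{def:admissible}. Once that is done, parts (i) and (ii) of Theorem \ref{thm:unified_defect} give the two displayed conclusions verbatim; here $g$ is real-valued, so $\overline{v_n}=v_n$ and $Q u_n\overline{v_n}=Q u_n v_n$.

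\textbf{Admissibility.} Set $K=\mathrm{supp}\,\chi$, a fixed compact set. Then $\mathrm{supp}\,v_n\subseteq K$ for every $n$. Since $g$ is globally bounded, we have $|v_n|\le \|g\|_{\mathrm{L}^\infty}\chi$, so
\[
\|v_n\|_{\mathrm{L}^q(\Rd)}\le \|g\|_{\mathrm{L}^\infty}\|\chi\|_{\mathrm{L}^q(\Rd)}
\]
uniformly in $n$. This is the mechanism of Corollary \ref{cor:Lp_Linfty}: a family that is uniformly compactly supported and $\mathrm{L}^\infty$-bounded is bounded in every finite $\mathrm{L}^q$. Weak convergence $v_n\rightharpoonup v$ in $\mathrm{L}^q(\Rd)$ is assumed in the statement. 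Even without that assumption it would follow, after passing to a further subsequence, from reflexivity of $\mathrm{L}^q$. Measurability of $g(u_n)$ follows from continuity of $g$.

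\textbf{Applying the earlier results.} The primary sequence is bounded in $\mathrm{L}^p$ with $u_n\rightharpoonup u$. It satisfies the divergence constraint with $a_j\in\mathrm{L}^\infty\cap\mathrm{VMO}$ and $f_n\to0$ in $\mathrm{W}^{-1,p}$. Together with $\frac1p+\frac1q<1$, all hypotheses of Theorem \ref{thm:unified_defect} hold.

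Part (i) follows through Theorem \ref{thm:defect_rep}. We center the sequences at their weak limits. The cross terms vanish because $Q\phi\overline v\in\mathrm{L}^{p'}$ and $Q\phi u\in \mathrm{L}^{q'}$ on the compact set $\mathrm{supp}\,\phi$. The remaining oscillatory term is $D(Q\phi,1)$ for the H-distribution of $(u_n-u,\,v_n-v)$. That H-distribution exists by Theorem \ref{thm:h_dist_vmo}, and $Q\phi$ is a legitimate test function by the Banach algebra property.

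Part (ii) is Theorem \ref{thm:localisation} applied to the oscillatory pair. Since $\partial_j(a_j u)=\lim_n f_n=0$, the sequence $u_n-u$ satisfies the same constraint with right-hand side $f_n\to0$. The Riesz-potential testing argument needs only the $\mathrm{L}^q$ bound and weak nullity of $v_n-v$.

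\textbf{Main obstacle.} I expect the point needing most care to be consistency of subsequences. The H-distribution is built along a subsequence, whereas $\omega$ is the limit along the full sequence. Since a distributional limit is inherited by every subsequence, the identity $\langle\omega-Quv,\phi\rangle=D(Q\phi,1)$ holds along the extracted subsequence, and the same $D$ then serves in (ii).

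The DiPerna--Lions renormalization mentioned in the text is not needed. The localisation principle constrains only the $u_n$-slot, so no differential constraint on $v_n$ is required. This is why a merely bounded continuous $g$, with no monotonicity, suffices. Finally, "supported on the characteristic variety" is to be read, as in Corollary \ref{cor:comp_comp}, as the identity $\sum_j D(a_j\varphi,\kappa_j\psi)=0$ itself.
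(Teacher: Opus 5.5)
Your proposal is correct and follows the paper's own route. Like the paper, you verify that $v_n=\chi g(u_n)$ is an admissible testing sequence (supported in $\mathrm{supp}\,\chi$, bounded in $\mathrm{L}^\infty$ and hence in $\mathrm{L}^q$, with weak convergence assumed) and then invoke Theorem \ref{thm:unified_defect}. Your additional remarks fill in points the paper leaves implicit: that $v_n$ is real-valued, that $u_n-u$ still satisfies the constraint because $\sum_j\partial_j(a_ju)=0$, and that the subsequences are consistent.
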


\begin{proof}
It suffices to verify that $(v_n)$ is an admissible testing sequence in $\mathrm{L}^q_c(\Rd)$ (Definition \ref{def:admissible});
both conclusions then follow at once from the Unified Macroscopic Defect Theorem (Theorem \ref{thm:unified_defect}).
Since $g$ is globally bounded, $v_n = \chi g(u_n)$ is uniformly bounded in $\mathrm{L}^\infty(\Rd)$ with support contained
in the fixed compact set $\mathrm{supp}\,\chi$, hence uniformly bounded in $\mathrm{L}^q(\Rd)$; together with the assumed
weak convergence $v_n \rightharpoonup v$ this exhibits $(v_n)$ as admissible. Theorem \ref{thm:unified_defect} then delivers
both the exact reaction-defect representation $\auf \omega - Q(\mx) u v, \phi \zu = D(Q\phi, 1)$ and the characteristic
confinement $\sum_{j=1}^d D(a_j \varphi, \kappa_j \psi) = 0$, so the defect measure is entirely supported on the
characteristic variety $\sum_{j=1}^d a_j(\mx) \xi_j = 0$.
\end{proof}

\begin{remark}
We note that because the operator $g(s)$ is non-linear, the weak limit $v$ of the testing sequence $v_n = \chi(\mx) g(u_n)$
does not generally coincide with the composition of the limit, $\chi(\mx) g(u)$. Consequently, when the defect measure $D$
is non-zero, the macroscopic reference state $Q(\mx) u v$ cannot be evaluated from the macroscopic limit $u$ alone.
To compute the physical defect $\auf \omega - Q(\mx) \chi(\mx) u g(u), \phi \zu$, this H-distribution framework must be
coupled with the associated Young measure $\eta_\mx$ of the sequence $(u_n)$ to identify the non-linear weak limit
$v(\mx) = \chi(\mx) \int_{\mathbf{R}} g(\lambda) \, d\eta_\mx(\lambda)$.
\end{remark}

\begin{remark}
The physical implications of this geometric restriction are specific to $\mathrm{VMO}$ environments. Because the defect
measure is supported orthogonal to the velocity symbol, the macroscopic reaction defect cannot propagate along the
macroscopic streamlines $\mathbf{a}(\mx)$. Any anomalous reaction rates generated by the highly oscillatory pollutant
sequence interacting with the sharply varying geological strata are trapped locally, accumulating transversally to the fluid flow.
\end{remark}

\section{Application: Zero-Order Operators and Cross-Phase Energy}\label{sec:zero_order}
To extend the $\mathrm{VMO}$ H-distribution framework further, we go beyond pointwise nonlinearities to evaluate non-local pseudo-differential operators.
While fractional integration operators inherently smooth weakly converging sequences (granting local strong compactness via the
Rellich-Kondrachov embedding), zero-order Fourier multipliers do not confer any gain in regularity, ensuring the sequences remain purely oscillatory.

\subsection{Physical Motivation and VMO Reactive Energy}
In wave mechanics and electromagnetism, the macroscopic transport of energy is frequently coupled with orthogonal, phase-shifted interactions.
For example, applying a zero-order operator such as the Riesz transform to an oscillating electric or acoustic
field corresponds to a precise $\pi/2$ phase shift in frequency space.
The interaction between the primary field and its phase-shifted projection represents non-local ``reactive'' or
``cross-phase'' energy, that is, energy that is temporarily trapped and exchanged locally within the medium rather
than being actively propagated or dissipated.
When waves propagate through highly heterogeneous environments, such as composite dielectrics or acoustic metamaterials,
the background material properties (e.g., electrical permittivity or bulk modulus) are governed by the coefficients $a_j(\mx)$.
As established in previous sections, the sharp macroscopic boundaries of these composite phases render the coefficients sharply
varying yet of vanishing mean oscillation, which is what requires the $\mathrm{L}^\infty(\Rd) \cap \mathrm{VMO}(\Rd)$ setting.
In classical linear theory with uniformly continuous coefficients, macroscopic reactive cross-energy can often be cleanly averaged or decoupled.
However, in high-contrast $\mathrm{VMO}$ environments, incident waves scatter strongly against the dense microscopic boundary layers.
Because zero-order operators preserve the spectral scaling of the original oscillations without conferring any fractional smoothing,
the primary wave and its phase-shifted counterpart remain coupled at the microscopic level.
The framework allows us to quantify this phenomenon.
By evaluating the H-distribution against a zero-order symbol, we can isolate the reactive energy trapped within the $\mathrm{VMO}$
structural heterogeneities, energy that would be invisible to classical macroscopic homogenization.

\subsection{The Cross-Phase Defect Measure}
Following the canonical framework, we use a zero-order Fourier multiplier $\mathcal{A}_m$ with symbol $m \in \mathrm{C}^\kappa(\mathrm{S}^{d-1})$.
The interaction between a primary sequence $(u_n)$ and its phase-shifted projection forms the non-local cross-energy density: $Q(\mx) u_n \mathcal{A}_m(u_n)$.

\begin{theorem}\label{thm:zero_order_app}
Let $p \in (2, \infty)$. Assume that $(u_n)$ is bounded in $\mathrm{L}^p(\Rd)$ and $u_n \rightharpoonup u$ weakly in $\mathrm{L}^p(\Rd)$.
Assume that $(u_n)$ satisfies the differential constraint:
\begin{equation*}
\sum_{j=1}^d \partial_j(a_j u_n) = f_n
\end{equation*}
where $f_n \to 0$ strongly in $\mathrm{W}^{-1, p}(\Rd)$ and the coefficients satisfy $a_j \in \mathrm{L}^\infty(\Rd) \cap \mathrm{VMO}(\Rd)$.
Let $m \in \mathrm{C}^\kappa(\mathrm{S}^{d-1})$ be a zero-order symbol, and let $\chi \in \mathrm{C}_c^\infty(\Rd)$ be a non-negative spatial cut-off.
Define the localized testing sequence $v_n = \chi(\mx) \mathcal{A}_m(u_n)$, and assume $v_n \rightharpoonup v = \chi \mathcal{A}_m(u)$ weakly in $\mathrm{L}^p(\Rd)$.
For any coefficient $Q \in \mathrm{L}^\infty(\Rd) \cap \mathrm{VMO}(\Rd)$, if the cross-energy sequence
$Q(\mx) u_n \overline{v_n} \to \omega$ in the sense of distributions $\mathcal{D}'(\Rd)$, then the
macroscopic cross-energy defect is exactly quantified by the H-distribution acting on any test function
$\phi \in \mathrm{C}_c^\infty(\Rd)$ as:
\begin{equation*}
\auf \omega - Q(\mx) u \overline{v}, \phi \zu = D(Q\phi, 1).
\end{equation*}
Furthermore, this cross-energy defect is geometrically trapped, strictly satisfying the microlocal constraint:
\begin{equation*}
\sum_{j=1}^d D(a_j \varphi, \kappa_j \psi) = 0
\end{equation*}
for all $\varphi \in \mathrm{L}^\infty(\Rd) \cap \mathrm{VMO}_c(\Rd)$ and $\psi \in \mathrm{C}^\kappa(\mathrm{S}^{d-1})$.
\end{theorem}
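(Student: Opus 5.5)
The plan mirrors the proof of Theorem \ref{thm:transport_app}: reduce everything to the Unified Macroscopic Defect Theorem (Theorem \ref{thm:unified_defect}) with the choice $q=p$. The only work is to check that the pair of exponents satisfies the strict gap and that $v_n=\chi\mathcal{A}_m(u_n)$ is an admissible testing sequence in the sense of Definition \ref{def:admissible}.

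First, since $p>2$, taking $q=p$ gives $\frac1p+\frac1q=\frac2p<1$, which is the strict integrability gap required throughout. Next, the symbol $m$ lies in $\mathrm{C}^\kappa(\mathrm{S}^{d-1})$ with $\kappa>d/2$. By the H\"ormander--Mikhlin theorem, $\mathcal{A}_m$ is therefore bounded on $\mathrm{L}^p(\Rd)$, with $\|\mathcal{A}_m u_n\|_{\mathrm{L}^p}\le C_{d,p}\|m\|_{\mathrm{C}^\kappa}\sup_n\|u_n\|_{\mathrm{L}^p}$. Multiplying by the fixed cut-off $\chi\in\mathrm{C}_c^\infty(\Rd)$ gives a sequence $(v_n)$ that is uniformly bounded in $\mathrm{L}^p(\Rd)$ and supported in the single compact set $\mathrm{supp}\,\chi$.

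The weak convergence $v_n\rightharpoonup v=\chi\mathcal{A}_m(u)$ is assumed in the statement. It is in any case automatic: $\mathcal{A}_m$ is bounded and linear, hence weak-to-weak continuous, and multiplication by $\chi\in\mathrm{L}^\infty$ preserves weak convergence. So $(v_n)$ is an admissible testing sequence in $\mathrm{L}^p_c(\Rd)$.

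With admissibility established, Theorem \ref{thm:unified_defect} applies with $q=p$. Its item (i), which rests on Theorem \ref{thm:defect_rep}, gives $\langle\omega-Qu\overline{v},\phi\rangle=D(Q\phi,1)$ for the H-distribution $D$ associated with $(u_n-u)$ and $(v_n-v)$. Its item (ii), which is the Localisation Principle (Theorem \ref{thm:localisation}) applied to the divergence-form constraint on $u_n$ with $a_j\in\mathrm{L}^\infty\cap\mathrm{VMO}$, gives $\sum_j D(a_j\varphi,\kappa_j\psi)=0$.

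The step most likely to need care is the use of the Localisation Principle. Theorem \ref{thm:localisation} is stated for a sequence $u_n\rightharpoonup0$, whereas here the H-distribution is built from $u_n-u$. The plan is to observe that the constraint is linear. Passing to the weak limit in $\sum_j\partial_j(a_ju_n)=f_n$ with $f_n\to0$ shows that $u$ satisfies $\sum_j\partial_j(a_ju)=0$. Subtracting, $u_n-u$ satisfies the same constraint with right-hand side $f_n\to0$ in $\mathrm{W}^{-1,p}$. Likewise $v_n-v\rightharpoonup0$ in $\mathrm{L}^p$, so the hypotheses of Theorem \ref{thm:localisation} hold for the centred pair and the constraint transfers. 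No differentiability of $m$ beyond $\mathrm{C}^\kappa$ is needed, since $\mathcal{A}_m$ enters only through the $\mathrm{L}^p$ bound.
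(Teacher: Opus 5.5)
Your proposal is correct and follows the paper's proof: set $q=p$ so that $\frac1p+\frac1q=\frac2p<1$, verify that $v_n=\chi\mathcal{A}_m(u_n)$ is an admissible testing sequence using the $\mathrm{L}^p$-boundedness of $\mathcal{A}_m$ and the fixed compact support of $\chi$, and then invoke Theorem~\ref{thm:unified_defect}. Your extra observation that $\sum_j\partial_j(a_j u)=0$, so that the centred sequence $u_n-u$ satisfies the same constraint and Theorem~\ref{thm:localisation} genuinely applies to the pair $(u_n-u,\,v_n-v)$, makes explicit a step that the paper's proof of Theorem~\ref{thm:unified_defect} covers only by saying the Localisation Principle ``applies verbatim''.
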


\begin{proof}
Because $p > 2$, we symmetrically set $q = p$, so that $\frac{1}{p} + \frac{1}{q} = \frac{2}{p} < 1$ strictly satisfies
the integrability gap. It suffices to verify that $(v_n)$ is an admissible testing sequence in $\mathrm{L}^p_c(\Rd)$
(Definition \ref{def:admissible}); both conclusions then follow from the Unified Macroscopic Defect Theorem
(Theorem \ref{thm:unified_defect}). By the classical Calder\'on-Zygmund theorem \cite{SteinHA}, the zero-order operator
$\mathcal{A}_m$ acts continuously on $\mathrm{L}^p(\Rd)$, so $\mathcal{A}_m(u_n)$ is uniformly bounded in $\mathrm{L}^p(\Rd)$;
multiplying by the compactly supported cut-off $\chi(\mx)$ gives $v_n \in \mathrm{L}^p_c(\Rd)$ with support in
$\mathrm{supp}\,\chi$, uniformly bounded and converging weakly to $v = \chi \mathcal{A}_m(u)$, hence admissible.
Because the operator is of order zero it grants no fractional Sobolev regularity, so the sequence retains its
microlocal oscillations and does not converge strongly. Theorem \ref{thm:unified_defect} then yields both the
exact cross-energy defect representation $\auf \omega - Q(\mx) u \overline{v}, \phi \zu = D(Q\phi, 1)$ and the
characteristic confinement $\sum_{j=1}^d D(a_j \varphi, \kappa_j \psi) = 0$, quantifying the geometric structure
of the energy trapped in the non-local phase shifts generated by the $\mathrm{VMO}$ medium.
\end{proof}

\section{Application: High-Contrast Acoustics and Sub-Critical Energy}\label{sec:acoustics}
To use the last of the canonical testing sequences introduced in Section \ref{sec:canonical}, we turn to the sub-critical Nemytskii operator.
This formulation arises in the study of high-frequency acoustic wave propagation through highly heterogeneous
geological or manufactured media \cite{Ammari}, where uniform continuity assumptions must be discarded.

\subsection{Acoustic Scattering and Impedance in VMO Strata}
Consider high-frequency acoustic pressure fluctuations traveling through a highly stratified geological formation (e.g., seismic waves in the Earth's crust) or engineered acoustic metamaterials.
In classical continuous media, acoustic waves propagate with adiabatic impedance transitions, allowing the energy to be tracked smoothly via ray theory.
However, in highly stratified environments, the medium's density and bulk modulus vary abruptly, through thin but continuous transition layers, across geological boundaries or structural interfaces.
While these transitions are continuous at the microscopic level, their macroscopic profile is very steep.
Classical $\mathrm{C}^0$ approximations fail because the spatial gradient of the acoustic impedance (the product of density and wave speed) effectively blows up.
The functional space $\mathrm{L}^\infty(\Rd) \cap \mathrm{VMO}(\Rd)$ models these sharp impedance interfaces without requiring artificial smoothing.
Let the scalar sequence $(u_n)$ represent the highly oscillatory acoustic pressure field, constrained by the stationary momentum balance
$\sum_{j=1}^d \partial_j(a_j u_n) = f_n$, where the coefficients $a_j(\mx)$ represent the sharply varying background material properties.
In these high-contrast $\mathrm{VMO}$ environments, incident acoustic energy does not merely propagate;
it undergoes local scattering, resonance, and reflection, causing the acoustic pressure to oscillate strongly and become trapped within the microscopic transition layers.
To quantify the energy of these trapped waves, we must look beyond linear acoustics.
In regions of high pressure concentration, nonlinear acoustic effects such as wave steepening, localized harmonic generation, and nonlinear attenuation become significant.
To model this nonlinear energy buildup without violating the integrability gap of the H-distribution framework, we employ a sub-critical polynomial functional.
Let the nonlinear acoustic response be governed by $g(s) = s|s|^{\gamma-1}$ for some growth exponent $\gamma > 1$.
Because $g$ is continuous and strictly monotonic, it captures the nonlinear pressure amplification.
By restricting the growth to a sub-critical threshold, we prevent unphysical shock-wave singularities in the weak formulation,
keeping the localized energy within the target Lebesgue space required by the $\mathrm{L}^p-\mathrm{L}^q$ framework.

\begin{theorem}\label{thm:acoustic_app}
Let $p, q \in (1, \infty)$ such that $\frac{1}{p} + \frac{1}{q} < 1$.
Assume that $(u_n)$ is bounded in $\mathrm{L}^p(\Rd)$ and $u_n \rightharpoonup u$ weakly in $\mathrm{L}^p(\Rd)$.
Assume that $(u_n)$ satisfies the acoustic constraint:
\begin{equation*}
\sum_{j=1}^d \partial_j(a_j u_n) = f_n
\end{equation*}
where $f_n \to 0$ strongly in $\mathrm{W}^{-1, p}(\Rd)$ and the coefficients satisfy $a_j \in \mathrm{L}^\infty(\Rd) \cap \mathrm{VMO}(\Rd)$.
Let the energy response exponent satisfy $\gamma \leq \frac{p}{q}$. Define the localized sub-critical testing sequence
$v_n = \chi(\mx) u_n |u_n|^{\gamma-1}$ for a non-negative spatial cut-off $\chi \in \mathrm{C}_c^\infty(\Rd)$, and
assume $v_n \rightharpoonup v$ weakly in $\mathrm{L}^q(\Rd)$.
For any coefficient $Q \in \mathrm{L}^\infty(\Rd) \cap \mathrm{VMO}(\Rd)$, if the sub-critical energy sequence
$Q(\mx) u_n v_n \to \omega$ in the sense of distributions $\mathcal{D}'(\Rd)$, then the macroscopic sub-critical defect
$\omega - Q(\mx) u v$ is exactly quantified by the H-distribution acting on any test function $\phi \in \mathrm{C}_c^\infty(\Rd)$ as:
\begin{equation*}
\auf \omega - Q(\mx) u v, \phi \zu = D(Q\phi, 1).
\end{equation*}
Furthermore, this non-linear energy defect is geometrically trapped by the $\mathrm{VMO}$ strata, strictly satisfying the microlocal impedance constraint:
\begin{equation*}
\sum_{j=1}^d D(a_j \varphi, \kappa_j \psi) = 0
\end{equation*}
for all $\varphi \in \mathrm{L}^\infty(\Rd) \cap \mathrm{VMO}_c(\Rd)$ and $\psi \in \mathrm{C}^\kappa(\mathrm{S}^{d-1})$.
\end{theorem}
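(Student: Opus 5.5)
As with Theorems \ref{thm:transport_app} and \ref{thm:zero_order_app}, the plan is to reduce everything to the Unified Macroscopic Defect Theorem (Theorem \ref{thm:unified_defect}). The divergence constraint on $(u_n)$, the VMO regularity of the $a_j$ and of $Q$, and the strict gap $\frac1p+\frac1q<1$ are already hypotheses. So the only thing left to check is that $v_n=\chi\, u_n|u_n|^{\gamma-1}$ is an admissible testing sequence in $\mathrm{L}^q_c(\Rd)$ in the sense of Definition \ref{def:admissible}. Once it is, parts (i) and (ii) of Theorem \ref{thm:unified_defect} give the two displayed conclusions directly. Since $v_n$ is real-valued, $\overline{v_n}=v_n$ and $\overline v=v$, so the expression $Q u_n v_n$ matches the sesquilinear form $Q u_n\overline{v_n}$.

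Admissibility has three parts, taken in order.

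\emph{Support.} We have $\mathrm{supp}\,v_n\subseteq K:=\mathrm{supp}\,\chi$ for every $n$, and $K$ is a single fixed compact set.

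\emph{Uniform bound.} This is the analytic core, following Subsection \ref{subsec:subcrit}. Pointwise, $|v_n|\le\|\chi\|_{\mathrm{L}^\infty}|u_n|^\gamma\mathbf 1_K$, so
\begin{equation*}
\|v_n\|_{\mathrm{L}^q(\Rd)}^q\le\|\chi\|_{\mathrm{L}^\infty}^q\int_K|u_n|^{\gamma q}\,d\mx .
\end{equation*}
Because $\gamma\le p/q$, we have $\gamma q\le p$. Since $|K|<\infty$, H\"older's inequality (the local embedding $\mathrm{L}^p(K)\hookrightarrow\mathrm{L}^{\gamma q}(K)$) gives
\begin{equation*}
\int_K|u_n|^{\gamma q}\,d\mx\le|K|^{1-\gamma q/p}\|u_n\|_{\mathrm{L}^p}^{\gamma q}.
\end{equation*}
This bound is uniform in $n$ because $(u_n)$ is bounded in $\mathrm{L}^p$. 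The cut-off is genuinely needed here: without it, $|u_n|^{\gamma q}$ need not be integrable on the tails of $\Rd$.

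\emph{Weak convergence.} This is assumed in the statement. Even if it were not, reflexivity of $\mathrm{L}^q$ would supply a weakly convergent subsequence, and the H-distribution is in any case only defined along a subsequence.

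The step I expect to be the main obstacle is making sure the use of Theorem \ref{thm:unified_defect} is consistent. That theorem builds $D$ from the centred sequences $(u_n-u)$ and $(v_n-v)$, which requires $v_n-v\rightharpoonup0$ in $\mathrm{L}^q$. This holds by the assumed weak convergence, and $v$ itself lies in $\mathrm{L}^q$ with support in $K$, so the centred sequence is again admissible.

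One further point needs care. Part (ii), the localisation, only uses the divergence constraint on the first sequence and the $\mathrm{L}^q$ bound on the second; no equation for $v_n$ is needed. I will therefore note explicitly that the nonlinearity $g(s)=s|s|^{\gamma-1}$ enters solely through the admissibility verification. The nonlinearity is continuous, and $\gamma>1$ causes no measurability or growth issue within the sub-critical range. With these checks done, both conclusions follow verbatim from Theorem \ref{thm:unified_defect}.
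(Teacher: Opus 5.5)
Your proposal is correct and is essentially the paper's proof. Both reduce the statement to Theorem~\ref{thm:unified_defect} by checking that $v_n=\chi\, u_n|u_n|^{\gamma-1}$ is admissible in the sense of Definition~\ref{def:admissible}, with the uniform $\mathrm{L}^q$ bound coming from $\gamma q\le p$ and H\"older's inequality on $K=\mathrm{supp}\,\chi$; your further remark that real-valuedness gives $\overline{v_n}=v_n$, so that $Q u_n v_n$ coincides with the form $Q u_n\overline{v_n}$ of Theorem~\ref{thm:unified_defect}, makes explicit a point the paper leaves implicit.
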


\begin{proof}
It suffices to verify that the localized sub-critical sequence $(v_n)$ is an admissible testing sequence in
$\mathrm{L}^q_c(\Rd)$ (Definition \ref{def:admissible}); both conclusions then follow from the Unified Macroscopic
Defect Theorem (Theorem \ref{thm:unified_defect}). Let $g(s) = s|s|^{\gamma-1}$ and $K = \mathrm{supp}\,\chi$.
Because $u_n$ is bounded in $\mathrm{L}^p(\Rd)$, its restriction to $K$ belongs to $\mathrm{L}^p(K)$, and the
sub-critical growth bound $|g(s)| = |s|^\gamma$ with $\gamma \leq p/q$ gives $\gamma q \leq p$, so the bounded-domain
embedding $\mathrm{L}^p(K) \hookrightarrow \mathrm{L}^{\gamma q}(K)$ acts continuously and
\begin{equation*}
\int_K |\chi g(u_n)|^q \, d\mx \leq \|\chi\|_\infty^q C_K \|u_n\|_{\mathrm{L}^p(K)}^{\gamma q} < \infty.
\end{equation*}
Thus $v_n = \chi u_n |u_n|^{\gamma-1}$ is uniformly bounded in $\mathrm{L}^q(\Rd)$ with support in $K$, safely
bypassing the failure of global integrability, and together with the assumed weak limit $v$ it is admissible.
Theorem \ref{thm:unified_defect} then delivers both the exact sub-critical defect representation
$\auf \omega - Q(\mx) u v, \phi \zu = D(Q\phi, 1)$ and the characteristic confinement $\sum_{j=1}^d D(a_j \varphi, \kappa_j \psi) = 0$,
so the sub-critical non-linear defect is trapped in the null space of the acoustic impedance symbol.
\end{proof}

\begin{remark}
Similarly, because the sub-critical energy response $g(s) = s|s|^{\gamma-1}$ is non-linear, the weak limit $v$
of the testing sequence $v_n = \chi(\mx) u_n |u_n|^{\gamma-1}$ does not generally coincide with the composition
of the limit, $\chi(\mx) u |u|^{\gamma-1}$. When the acoustic defect measure $D$ is non-zero, the macroscopic
reference state $Q(\mx) u v$ cannot be isolated from the macroscopic limit $u$ alone. To compute the physical
trapped energy, this framework must be coupled with the associated Young measure $\eta_\mx$ of the sequence
$(u_n)$ to evaluate the non-linear weak limit $v(\mx) = \chi(\mx) \int_{\mathbf{R}} \lambda|\lambda|^{\gamma-1} \, d\eta_\mx(\lambda)$.
\end{remark}

\begin{remark}
This result shows that even though the acoustic response is non-linear (governed by the sub-critical polynomial energy buildup),
the resulting microscopic energy scattering is not geometrically chaotic.
The $\mathrm{VMO}$ framework shows that this non-linear trapped energy remains constrained by the linear microlocal geometry of
the underlying geological strata, oscillating orthogonal to the background acoustic impedance vector $\mathbf{a}(\mx)$.
\end{remark}

\section{Future Directions and Possible Applications}\label{sec:future}
The framework for $\mathrm{VMO}$ H-distributions established in this note opens several directions for further analysis.
The main feature of this framework is the ability to handle rough, sharply varying coefficients that classical H-measures and
distributions cannot process, provided their oscillation vanishes in the mean, i.e.\ they are of vanishing mean oscillation.
Some possible future applications are the following.

\subsection{Fractional Derivatives and Non-Local Dirichlet Forms}
While fractional integration smooths weakly converging sequences, fractional derivatives strictly preserve and amplify
microlocal oscillations. If a sequence is bounded in a fractional Sobolev space $\mathrm{W}^{s, p}(\Rd)$, one can construct
a non-local testing sequence via the fractional Laplacian: $v_n = \chi (-\Delta)^{\beta/2} u_n$. Because the compact-support
construction is compatible with fractional Sobolev regularity without generating unphysical boundary singularities
(Proposition \ref{prop:frac_sobolev} in Appendix \ref{sec:appendix_topology}), provided the fractional order satisfies the
integrability gap $s - \beta \geq 0$ such that $v_n \in \mathrm{L}^q_c(\Rd)$, the $\mathrm{VMO}$ H-distribution framework
can be used to quantify non-local phase transitions and fractional Dirichlet energy defects in highly heterogeneous composite media.

\subsection{Variable-Exponent Nemytskii Operators}
The sub-critical polynomial bounds established in Section \ref{subsec:subcrit} can be generalized to spatially dependent non-linearities.
By defining the testing sequence $v_n = \chi(\mx) u_n |u_n|^{\gamma(\mx)-1}$ for a variable exponent
$\gamma(\mx) \in \mathrm{L}^\infty(\Rd)$, the sequence naturally maps into Orlicz-Lebesgue spaces.
Provided the essential supremum satisfies $\esssup \gamma(\mx) \leq p/q$, the framework remains intact.
This gives a pathway for using H-distributions to analyze electrorheological non-Newtonian fluids and smart materials modeled by $p(x)$-Laplacians in $\mathrm{VMO}$ domains.

\subsection{Homogenization of Composite Media with Rough Coefficients}
The homogenization of differential operators is a classical framework for computing the effective macroscopic
properties of highly heterogeneous materials \cite{ZKO, Milton}.
In modern approaches, standard H-measures are frequently used to quantify the associated oscillation and concentration effects.
However, the classical H-measure theory requires continuous coefficients, which poses physical limitations when
distinct composite phases form sharp boundaries.
Because $\mathrm{VMO}$ permits rough coefficients whose mean oscillation vanishes at small scales, the Localisation
Principle derived in Theorem \ref{thm:localisation} can be applied to the homogenization of elliptic PDEs modeling
composite materials or stratified fluids where the microscopic heterogeneities belong to $\mathrm{VMO}$.

\subsection{Vectorial Compensated Compactness and a div--curl Lemma}
The scalar Localisation Principle of Theorem \ref{thm:localisation} constrains defects generated by a single
divergence-form equation. A natural and important extension is a genuine div--curl lemma for vector fields with
$\mathrm{VMO}$ coefficients: given $u_n\rightharpoonup u$ and $v_n\rightharpoonup v$ with $\mathrm{div}(A u_n)$
and $\mathrm{curl}(B v_n)$ precompact in the appropriate negative Sobolev spaces and
$A,B\in\mathrm{L}^\infty(\Rd)\cap\mathrm{VMO}(\Rd)$, one would seek to pass the bilinear product $u_n\cdot v_n$
to its weak limit. The coefficient roughness is not the obstruction here, since it is again absorbed through the
commutator mechanism of Lemma \ref{1stcommlemma_Lp}. Three genuine difficulties must instead be addressed.
First, the present construction is scalar, whereas div--curl is intrinsically vectorial, so a matrix-valued
H-distribution $D_{ij}$ associated with the components $(u_n^i, v_n^j)$ is required. Second, the Localisation
Principle currently confines only the first slot; a symmetric two-sided localisation is needed, for which the
Hermitian symmetry of Proposition \ref{prop:hermitian} provides the appropriate transfer between the two slots.
Third, and most fundamentally, the classical div--curl pairing $u_n\cdot v_n$ is critical, corresponding to the
endpoint $\frac{1}{p}+\frac{1}{q}=1$, which lies precisely outside the admissible range of the present framework:
the strict gap $\frac{1}{p}+\frac{1}{q}<1$ that underpins the localized John--Nirenberg estimate is incompatible
with the borderline integrability. Consequently, the current construction yields only a \emph{sub-critical}
div--curl lemma, in which the product is more integrable than $\mathrm{L}^1(\Rd)$; recovering the sharp endpoint
statement would require extending the H-distribution construction to the excluded boundary case, where the
$\mathrm{L}^\infty$-versus-$\mathrm{BMO}$ control obstruction that motivates the strict gap resurfaces.

\subsection{Velocity Averaging in Kinetic Theory}
Corollary \ref{cor:Lp_Linfty} establishes the $\mathrm{L}^p-\mathrm{L}^\infty$ pairing, which provides the
functional topology required for velocity averaging lemmas in kinetic equations.
In standard transport equations, macroscopic observables are often strongly compact even if the kinetic density only converges weakly.
This framework can be adapted to transport equations where the macroscopic velocity or advection field lacks
continuity and exhibits $\mathrm{VMO}$ regularity (cf.\ \cite{EMM}), bypassing the regularity conditions that
impede classical compactness arguments.
We stress, however, that a genuine averaging lemma requires establishing a \emph{gain of regularity} for the
velocity average, which is not furnished by the defect representation alone but demands an additional
interpolation estimate; the discontinuous-flux case in particular has already been developed in \cite{EMM},
with which any such extension would substantially overlap.

\subsection{Non-Linear Zener Breakdown and Orlicz-VMO Spaces}
While the framework developed in this paper quantifies energy defects for linear and sub-critical non-linear systems,
highly heterogeneous engineered materials often exhibit critical, spatially varying non-linearities. One candidate for
future research is the evaluation of localized Joule heating in polycrystalline ceramic varistors (e.g., ZnO grains
separated by ultra-thin $\mathrm{Bi}_2\mathrm{O}_3$ boundaries) \cite{Cimatti}. 

Prior to Zener breakdown, the microscopic boundaries act as strict insulators, trapping energy. During breakdown,
the constitutive electromagnetic relationship follows a highly non-linear, variable-exponent law:
$\mathbf{J}_n = \sigma(\mx) |\mathbf{E}_n|^{\alpha(\mx)-1}\mathbf{E}_n$. Because the growth exponent $\alpha(\mx)$
varies drastically between the ohmic interior of the grain ($\alpha = 1$) and the non-linear interface ($\alpha \gg 1$),
the sequences do not natively reside within standard Lebesgue spaces $\mathrm{L}^p(\Rd)$.

To quantify the macroscopic defect measures generated by these sharp, lower-dimensional boundaries, the current
$\mathrm{L}^p-\mathrm{L}^q$ framework must be generalized to variable-exponent Lebesgue spaces $\mathrm{L}^{p(\mx)}(\Rd)$
or generalized Musielak-Orlicz spaces $\mathrm{L}^\Phi(\Rd)$; the homogenization of the underlying non-linear
conductivity problem has been studied in related settings \cite{Wellander}. Extending the $\mathrm{VMO}$
Macroscopic Defect Representation to these topologies would allow the evaluation of energy localization in materials
with strong, spatially-dependent non-linear growth.

\subsection{Non-Linear Evaluation via Young Measure Coupling}
In the non-linear applications in Sections \ref{sec:transport} and \ref{sec:acoustics}, the $\mathrm{VMO}$
H-distribution framework isolated the geometric support of the macroscopic defect measures. However, because
weak limits are not preserved under continuous non-linear transformations (e.g., $g(u_n) \rightharpoonup v \neq g(u)$),
the physical magnitude of the trapped energy cannot be evaluated from the macroscopic limit $u$ alone. 

To pass from identifying the geometric structure of the defect to evaluating its scalar magnitude, the
$\mathrm{VMO}$ H-distribution framework must be coupled with the theory of Young measures \cite{LazarMitrovic, MM}.
By associating a parametrized probability measure $\eta_\mx$ to the primary sequence $(u_n)$, the non-linear weak
limit can be evaluated as an expected value: $v(\mx) = \int_{\mathbf{R}} g(\lambda) \, d\eta_\mx(\lambda)$.
Establishing the commutation and compatibility conditions between microlocal $\mathrm{VMO}$ H-distributions and
macroscopic Young measures is a next step in the treatment of high-contrast non-linear energy defects.

\section{Acknowledgements}
The author is deeply grateful to Professor Luc Tartar for stimulating discussions concerning this topic and the role of Hardy spaces during his visit to the University of Zagreb in 2016.
A part of the work was performed while the author was visiting University Paris-Sud XI under the scholarship of the Government of the French Republic, whose support he gratefully acknowledges.
He thanks Laurent Moonens for the hospitality.

\section*{Declarations}
\textbf{Funding:} This research is supported by the Croatian Science Foundation, project number 9780, and by the University of Zagreb through grant PP04/2016. \\
\textbf{Conflict of interest:} The author declares that he has no conflict of interest. \\
\textbf{Data availability:} Data sharing is not applicable to this article as no datasets were generated or analysed during the current study.\\
\textbf{Generative AI and AI-assisted technologies:} During the preparation of this work, the author(s) used Google Gemini as an assistive tool
to transcribe handwritten mathematical notes into \LaTeX{} formatting, to polish the English prose for readability, and to help draft the initial
abstract and manuscript summary. Additionally, the AI was utilized during the research phase for conceptual exploration, specifically to search
heuristically for potential counterexamples to stress-test preliminary hypotheses. After using this tool, the author(s) meticulously reviewed,
verified, and edited all generated text and code. All mathematical claims, proofs, and counterexamples were independently rigorously verified
by the human author(s). The author(s) take full intellectual responsibility for the final content of this publication, including all mathematical
proofs, formatting, and conceptual framing.

\appendix
\section{Topological Properties of the LF-Space $\mathrm{VMO}_c$}
\label{sec:appendix_topology}
The definition of $\mathrm{VMO}_c(\Rd)$ as a strict inductive limit of Banach spaces (an LF-space) yields several topological properties
that bear on the behavior of weakly converging sequences in highly heterogeneous media. We establish the relevant properties here to
provide a foundation for the functional evaluations in the main text.

\subsection{Independence of the Exhausting Sequence}
The construction of $\mathrm{VMO}_c(\Rd)$ in Section \ref{sec:main_result} fixes a compact exhaustion $(K_n)_n$. We first record that
neither the topology nor the resulting H-distribution depends on this choice.

\begin{proposition}\label{prop:exhaustion}
The inductive limit topology on $\mathrm{VMO}_c(\Rd)$, and hence the H-distribution $D$ of Theorem \ref{thm:h_dist_vmo}, is independent
of the choice of compact exhaustion $(K_n)_n$.
\end{proposition}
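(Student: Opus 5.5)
The plan is to show that two compact exhaustions $(K_n)_n$ and $(L_m)_m$ of $\Rd$ (each satisfying $K_i\subset\mathrm{Int}K_{i+1}$, $L_i\subset\mathrm{Int}L_{i+1}$) give the same underlying set, the same topology, and the same functional $D$.

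\emph{Same set.} For each $n$ the family $(\mathrm{Int}L_m)_m$ is an increasing open cover of $\Rd$, because $L_m\subset\mathrm{Int}L_{m+1}$ and $\bigcup_m L_m=\Rd$. Since $K_n$ is compact, there is an index $m(n)$ with $K_n\subset\mathrm{Int}L_{m(n)}\subset L_{m(n)}$. Symmetrically, each $L_m$ lies in some $K_{n(m)}$. Moreover, any compactly supported $\varphi\in\mathrm{VMO}(\Rd)$ has support inside some $K_n$ and also inside some $L_m$. Hence
\begin{equation*}
\bigcup_n\mathrm{VMO}_{K_n}(\Rd)=\bigcup_m\mathrm{VMO}_{L_m}(\Rd),
\end{equation*}
so both constructions produce the same vector space, namely all compactly supported elements of $\mathrm{VMO}(\Rd)$.

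\emph{Same topology.} We compare neighbourhood bases via the universal property of the locally convex inductive limit. Let $\tau_K$ and $\tau_L$ denote the two inductive limit topologies. To show that the identity $(\mathrm{VMO}_c,\tau_K)\to(\mathrm{VMO}_c,\tau_L)$ is continuous, it suffices to show that each composite $\mathrm{VMO}_{K_n}(\Rd)\to(\mathrm{VMO}_c,\tau_L)$ is continuous. This map factors as
\begin{equation*}
\mathrm{VMO}_{K_n}(\Rd)\hookrightarrow\mathrm{VMO}_{L_{m(n)}}(\Rd)\to(\mathrm{VMO}_c,\tau_L).
\end{equation*}
The first arrow is an isometric inclusion: both step spaces carry the restriction of the single global $\mathrm{BMO}(\Rd)$ norm. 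The second arrow is continuous by the definition of $\tau_L$. Exchanging the roles of the two exhaustions gives the reverse continuity, so $\tau_K=\tau_L$.

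The only point needing care is that the step norms are the same ambient $\mathrm{BMO}(\Rd)$ norm, and not norms intrinsic to the compact sets. This is exactly the extrinsic definition adopted in Section \ref{sec:notation}; with intrinsic norms the inclusions would not be isometric, and the argument would require a separate comparison of norms.

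\emph{Same functional.} $D$ is defined on $\mathrm{L}^\infty(\Rd)\cap\mathrm{VMO}_c(\Rd)$ by the limit formula \eqref{eq:h_dist_def}, which refers to no exhaustion. Two points remain. First, the diagonal subsequence obtained in the proof of Theorem \ref{thm:h_dist_vmo} from one exhaustion also yields convergence for every test function supported in any $L_m$, since each $L_m\subset K_{n(m)}$. Thus a single subsequence serves both exhaustions, and on it the limit values coincide pointwise. Second, the estimate \eqref{eq:h_dist_bound} depends only on the support of $\varphi$, through an enclosing ball, and not on the exhaustion. Since the topologies coincide, the continuous extension guaranteed by the density result (Proposition \ref{prop:bounded_density}) is unique and is the same for both exhaustions.

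I expect the main subtlety to be stating precisely in what sense $D$ is independent. $D$ depends on the extracted subsequence, as the paper's first remark after Theorem \ref{thm:h_dist_vmo} notes, so the claim should be read as: for a fixed subsequence along which the limits exist, $D$ and its continuity are unaffected by the exhaustion. I will state it in this form.
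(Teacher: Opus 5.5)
Your proposal is correct and follows the paper's own argument. Both use the mutual cofinality of the two families of Banach steps, whose inclusions are isometric because every step carries the ambient $\mathrm{BMO}(\Rd)$ norm, and both note that $D$ is determined on the intrinsic steps $\mathrm{VMO}_K(\Rd)$. The differences are small: you verify the invariance of the inductive limit directly through the universal property rather than quoting it, and you add the worthwhile caveat that independence holds for a fixed extracted subsequence, since the paper's diagonal extraction itself runs over the chosen exhaustion.
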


\begin{proof}
Let $(K_n)_n$ and $(K_n')_n$ be two compact exhaustions of $\Rd$. Since each $K_n$ is compact and $\Rd=\bigcup_m \mathrm{Int}\,K_m'$,
there is an index $m=m(n)$ with $K_n\subset\mathrm{Int}\,K_m'$, and symmetrically; thus the two families of Banach steps
$\{\mathrm{VMO}_{K_n}\}$ and $\{\mathrm{VMO}_{K_m'}\}$ are mutually cofinal, and the inclusions between them are isometric embeddings
of Banach spaces. A strict inductive limit is unchanged, as a locally convex space and in its topology, under passage to a cofinal
subfamily; hence the two constructions yield the same space $\mathrm{VMO}_c(\Rd)$ with the same topology. Finally, by the
joint-continuity characterization established below (Proposition \ref{prop:joint_cont}) the functional $D$ is determined by its
restrictions to the Banach steps $\mathrm{VMO}_K(\Rd)$, which are intrinsic to the compact sets $K$ and not to their enumeration;
therefore $D$ is likewise independent of the exhaustion.
\end{proof}

\subsection{Failure of the Montel Property}
In standard distribution theory, the space of smooth test functions $\mathcal{D}(\Rd)$ is a Montel space, meaning that every closed
and bounded subset is relatively compact. This guarantees that bounded sequences of test functions admit strongly convergent subsequences.
The space of compactly supported $\mathrm{VMO}$ functions strictly fails this property.

\begin{proposition}\label{prop:montel}
The strict LF-space $\mathrm{VMO}_c(\Rd)$ is not a Montel space; a fortiori, neither is its bounded subspace $\mathrm{L}^\infty(\Rd)\cap\mathrm{VMO}_c(\Rd)$.
\end{proposition}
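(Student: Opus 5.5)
The plan is to show that a single Banach step $\mathrm{VMO}_{K}(\Rd)$, with $K$ a closed ball, contains a bounded sequence with no convergent subsequence. Since the LF-topology induces on each step its original Banach topology, such a sequence stays bounded in $\mathrm{VMO}_c(\Rd)$. It also has no Cauchy subsequence there, so its closure is bounded and closed but not compact, and the Montel property fails. To cover the bounded subspace as well, we choose the sequence uniformly bounded in $\mathrm{L}^\infty$ and smooth. The same sequence then lies in $\mathrm{L}^\infty(\Rd)\cap\mathrm{VMO}_c(\Rd)$ and witnesses the failure there too, with the topology inherited from $\mathrm{VMO}_c(\Rd)$.

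The construction is an oscillating sequence. Fix $\chi\in\mathrm{C}_c^\infty(\Rd)$ with $\mathrm{supp}\,\chi\subset K$ and $\chi\equiv1$ on a smaller ball $B_0$, and set $\varphi_n(\mx)=\chi(\mx)\sin(2\pi n x_1)$. Each $\varphi_n$ is smooth with compact support, so it lies in $\mathrm{VMO}_K(\Rd)$, and $\|\varphi_n\|_{\mathrm{L}^\infty}\leq\|\chi\|_\infty$ for all $n$.

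For the upper bound, recall that $\|f\|_{\mathrm{BMO}}\leq 2\|f\|_{\mathrm{L}^\infty}$. Hence $(\varphi_n)$ is bounded in $\mathrm{BMO}$, and therefore bounded in $\mathrm{VMO}_K(\Rd)$ and in the LF-space.

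For the lower bound, we show that for $m\neq n$ we have $\|\varphi_n-\varphi_m\|_{\mathrm{BMO}}\geq c>0$. It suffices to test the oscillation on the fixed ball $B_0$. On $B_0$ the difference equals $g_{nm}=\sin(2\pi nx_1)-\sin(2\pi mx_1)$. Its mean over $B_0$ tends to $0$ as $\max(n,m)\to\infty$, by the Riemann--Lebesgue lemma. Its $\mathrm{L}^1(B_0)$ average is bounded below, because by orthogonality $\frac1{|B_0|}\int_{B_0}|g_{nm}|^2\to1$, while $|g_{nm}|\leq2$. The elementary inequality $\int|g|^2\leq\|g\|_\infty\int|g|$ then gives $\frac1{|B_0|}\int_{B_0}|g_{nm}|\geq c_0>0$ for all $n\neq m$ large.

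Combining these two facts, $\frac1{|B_0|}\int_{B_0}|g_{nm}-(g_{nm})_{B_0}|\geq c_0/2$ for $n,m$ large, so the sequence is uniformly separated in $\mathrm{BMO}$. A subsequence converging in the LF-topology would converge in the Banach step $\mathrm{VMO}_K(\Rd)$, since bounded sets of a strict LF-space lie in a single step with the induced topology coinciding. That subsequence would then be Cauchy in $\mathrm{BMO}$, which is impossible.

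The main obstacle is this lower bound. We must ensure the oscillation is uniform in $n,m$ and is not destroyed by the additive constant (the mean). The Riemann--Lebesgue control of the mean together with the $\mathrm{L}^2$ lower bound handles this. Should that be delicate for small indices, we simply pass to a subsequence of large $n$.

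Alternatively, one can argue abstractly that $\mathrm{VMO}_K(\Rd)$ is an infinite-dimensional Banach space, so by Riesz's lemma its closed unit ball is not compact. However, the explicit sequence is preferable because it also handles the bounded subspace.
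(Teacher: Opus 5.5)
Your argument is correct in substance but takes a different route from the paper. The paper argues abstractly. It quotes the fact that a strict inductive limit of Fr\'echet spaces is Montel if and only if every step is Montel. It then observes via F.~Riesz's lemma that no infinite-dimensional Banach space is Montel, and notes that $\mathrm{VMO}_K(\Rd)$ is infinite-dimensional because it contains $\mathrm{C}_c^\infty(\mathrm{Int}\,K)$. The bounded subspace is handled in one line, by saying it too contains $\mathrm{C}_c^\infty(\mathrm{Int}\,K)$.

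You instead exhibit an explicit witness inside one step: a sequence bounded in $\mathrm{L}^\infty$, hence in $\mathrm{BMO}$, and uniformly separated in $\mathrm{BMO}$. You need only that the LF-topology induces the $\mathrm{BMO}$-norm topology on that step, and that bounded sets of a strict LF-space lie in a single step. This avoids the quoted characterization theorem and treats both spaces with the same smooth sequence. That matters somewhat, because $\mathrm{L}^\infty(\Rd)\cap\mathrm{VMO}_c(\Rd)$ is not itself a strict inductive limit of Fr\'echet spaces. Its steps are dense, non-closed subspaces of $\mathrm{VMO}_K(\Rd)$ (Proposition~\ref{prop:bounded_density}). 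So the paper's one-line transfer is really a Riesz-lemma argument inside a single normed step, which your sequence makes concrete.

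Your closing remark that Riesz's lemma would not cover the bounded subspace is, however, mistaken. Riesz's lemma holds in any infinite-dimensional normed space, for instance $\mathrm{C}_c^\infty(\mathrm{Int}\,K)$ with the $\mathrm{BMO}$ norm, which is a genuine norm on compactly supported functions. So the abstract route reaches the same conclusion with no computation.

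One computational step needs correcting. The claim that $\frac1{|B_0|}\int_{B_0}|g_{nm}|^2\to 1$ ``by orthogonality'' is false when $n-m$ stays bounded. Expanding the square gives
\[
\frac{1}{|B_0|}\int_{B_0}|g_{nm}|^2\,d\mx = 1 - c_{n-m} + o(1), \qquad c_k:=\frac{1}{|B_0|}\int_{B_0}\cos(2\pi k x_1)\,d\mx,
\]
as $\min(n,m)\to\infty$. For fixed $k$ the number $c_k$ need not vanish; for a small ball centred at the origin, $c_1$ is close to $1$. Similarly, the mean $(g_{nm})_{B_0}$ tends to $0$ as $\min(n,m)\to\infty$, not as $\max(n,m)\to\infty$.

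The conclusion survives in either of two ways. First, $c_k<1$ for every $k\neq 0$, since $\cos(2\pi k x_1)=1$ only on a null set, and $c_k\to 0$ by Riemann--Lebesgue; hence $\inf_{k\neq 0}(1-c_k)>0$ gives the uniform lower bound you need. Second, and more simply, you can use lacunary frequencies $n_j=2^j$ from the start, so that $|n_j-n_k|\to\infty$ and your orthogonality limit becomes correct. Merely passing to large $n$, as you suggest, does not repair the step, because it does not force $|n-m|\to\infty$.
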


\begin{proof}
A strict inductive limit of Fr\'echet spaces is a Montel space if and only if each of its generating step spaces is a Montel space.
By F.~Riesz's lemma, the closed unit ball of an infinite-dimensional normed space is never compact, so no infinite-dimensional Banach space is a Montel space.

Let $K \subset \Rd$ be a compact set with non-empty interior. The step space $\mathrm{VMO}_K(\Rd)$ is an infinite-dimensional Banach space,
since it already contains the infinite-dimensional family $\mathrm{C}_c^\infty(\mathrm{Int}\,K)$, and hence is not Montel.
Since the generating steps are not Montel spaces, the strict inductive limit $\mathrm{VMO}_c(\Rd)$ is not a Montel space, and its bounded
sets need not be relatively compact. The same conclusion transfers to the dense subspace $\mathrm{L}^\infty(\Rd)\cap\mathrm{VMO}_c(\Rd)$,
which likewise contains $\mathrm{C}_c^\infty(\mathrm{Int}\,K)$.
\end{proof}

\begin{remark}
In practice, this failure means that one cannot extract strongly converging subsequences from merely bounded sequences within this test space.
This forces the use of weak-$\ast$ compactness arguments and is the reason for the H-distribution framework developed in Section \ref{sec:main_result},
as the macroscopic defect measures must absorb the persistent microlocal oscillations that do not converge strongly.
\end{remark}

\subsection{Diameter Dependence of the Continuity Constant}
The continuity constant of Theorem \ref{thm:h_dist_vmo} is controlled by the diameter of the support, equivalently by the radius of the smallest
enclosing ball, which is the quantitative reason why the inductive limit topology, rather than a single global $\mathrm{BMO}$ norm, is unavoidable.

\begin{proposition}\label{prop:JN_constant}
Let $r\in(1,\infty)$ and let $\varphi\in\mathrm{VMO}(\Rd)$ be supported in a compact set $K$. Then
\begin{equation}\label{eq:JN_bound}
\|\varphi\|_{\mathrm{L}^r(K)} \leq C_d\, r\, (\operatorname{diam}K)^{d/r}\, \|\varphi\|_{\mathrm{BMO}(\Rd)},
\end{equation}
with $C_d$ depending only on the dimension. Consequently the bounding constant $C_{K_\varphi}$ of Theorem \ref{thm:h_dist_vmo}, with
$\frac1r = 1-\frac1p-\frac1q$, obeys $C_{K_\varphi}\lesssim (\operatorname{diam}K_\varphi)^{d/r}$ and tends to infinity as $\operatorname{diam}K_\varphi\to\infty$.
\end{proposition}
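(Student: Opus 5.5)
The proof repeats the argument already used in the proof of Theorem \ref{thm:h_dist_vmo}, now keeping track of every constant. Set $R=\operatorname{diam}K$ and pick a point $\mx_0\in K$, so that $K\subset B(\mx_0,R)$. We then take the enclosing ball $B=B(\mx_0,\lambda R)$, where $\lambda=\lambda_d\geq 2$ depends only on $d$. It is chosen so that $|B(\mx_0,R)|\leq \tfrac12|B|$; in fact $\lambda=2^{1/d}$ would do, but we take $\lambda=2$ for definiteness. This gives $|B\setminus K|\geq\tfrac12|B|$. Since $\varphi\equiv0$ on $B\setminus K$, the computation of Theorem \ref{thm:h_dist_vmo} gives $|\varphi_B|\leq 2\|\varphi\|_{\mathrm{BMO}(\Rd)}$. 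Then
\begin{equation*}
\|\varphi\|_{\mathrm{L}^r(K)}\leq \|\varphi-\varphi_B\|_{\mathrm{L}^r(B)}+|\varphi_B|\,|B|^{1/r},
\end{equation*}
and the second term is bounded by $2\,(\omega_d\lambda^d)^{1/r}R^{d/r}\|\varphi\|_{\mathrm{BMO}}$. Here $\omega_d$ is the volume of the unit ball. Since $(\omega_d\lambda^d)^{1/r}\leq\max(1,\omega_d\lambda^d)$ uniformly in $r>1$, this term is of the form $C_d R^{d/r}\|\varphi\|_{\mathrm{BMO}}$. That is even better than the claimed bound, because $r>1$.

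The main step is to get the linear-in-$r$ growth in the oscillation term from the John--Nirenberg inequality. We use it in its distributional form:
\begin{equation*}
|\{\mx\in B:|\varphi-\varphi_B|>t\}|\leq c_1|B|\,e^{-c_2 t/\|\varphi\|_{\mathrm{BMO}}},
\end{equation*}
with $c_1,c_2$ depending only on $d$. The layer-cake formula then gives
\begin{equation*}
\int_B|\varphi-\varphi_B|^r\,d\mx = r\int_0^\infty t^{r-1}|\{|\varphi-\varphi_B|>t\}|\,dt\leq c_1|B|\,\Gamma(r+1)\,\bigl(\|\varphi\|_{\mathrm{BMO}}/c_2\bigr)^r .
\end{equation*}
Taking $r$-th roots and using $\Gamma(r+1)^{1/r}\leq r$ for $r\geq1$ (this follows from $\Gamma(r+1)\leq r^r$), we get
\begin{equation*}
\|\varphi-\varphi_B\|_{\mathrm{L}^r(B)}\leq c_1^{1/r}c_2^{-1}\,r\,|B|^{1/r}\|\varphi\|_{\mathrm{BMO}}\leq C_d\,r\,R^{d/r}\|\varphi\|_{\mathrm{BMO}}.
\end{equation*}
Again $c_1^{1/r}\leq\max(1,c_1)$. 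Adding the two terms, and absorbing the mean term into $C_d\,r$ since $r>1$, gives \eqref{eq:JN_bound}. One small point needs care. The hypothesis only says $\varphi\in\mathrm{VMO}(\Rd)$, so we should check that $\varphi$ is locally integrable and that John--Nirenberg applies to it. This is immediate, because $\mathrm{VMO}\subset\mathrm{BMO}$ and John--Nirenberg holds for every $\mathrm{BMO}$ function on every ball.

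The consequence for $C_{K_\varphi}$ is then read off from the chain of estimates in the proof of Theorem \ref{thm:h_dist_vmo}. There, $C_{K_\varphi}=C_{d,q}\,C_{\mathrm{BMO},K_\varphi,r}\sup_n\|u_n\|_{\mathrm{L}^p}\sup_n\|v_n\|_{\mathrm{L}^q}$ with $\frac1r=1-\frac1p-\frac1q$. We insert $C_{\mathrm{BMO},K_\varphi,r}=C_d\,r\,(\operatorname{diam}K_\varphi)^{d/r}$, and the growth $\lesssim(\operatorname{diam}K_\varphi)^{d/r}$ follows. Since $d/r>0$, this upper bound tends to infinity as $\operatorname{diam}K_\varphi\to\infty$. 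If the phrase ``tends to infinity'' is meant as a statement about the optimal constant, we would also supply a lower bound. For this, take the test functions $\varphi_R=\log^+\bigl(R/|\mx|\bigr)$, truncated at a fixed height and cut off smoothly, or dilates of a fixed bump. Dilation leaves the $\mathrm{BMO}$ norm invariant while the $\mathrm{L}^r$ norm scales like $R^{d/r}$, so the power $R^{d/r}$ is sharp. This sharpness check is the only part beyond routine bookkeeping. The dilation argument is the first thing we would try for it, since the scaling is exact.
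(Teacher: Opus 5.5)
Your proof is correct and follows essentially the paper's route. You take an enclosing ball $B$ with $|B\setminus K|\geq\tfrac12|B|$ to obtain $|\varphi_B|\leq 2\|\varphi\|_{\mathrm{BMO}(\Rd)}$, then apply John--Nirenberg on that single ball. The only differences are your simpler choice of ball (centred at a point of $K$ with radius $2\operatorname{diam}K$, rather than the paper's enlargement of the minimal enclosing ball) and your explicit layer-cake derivation of the linear-in-$r$ constant via $\Gamma(r+1)\leq r^r$, which the paper simply quotes.

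Your optional dilation argument establishes sharpness of the power in \eqref{eq:JN_bound} only. It gives no lower bound for the constant of a particular $D$, which may vanish identically (e.g.\ if $u_n\to0$ strongly). The paper does not need it, since it reads ``tends to infinity'' as a statement about the derived bound.
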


\begin{proof}
Write $\omega_d=|B(0,1)|$, and let $R_K$ denote the radius of the smallest closed ball enclosing $K$, so that $R_K\leq\operatorname{diam}K$
and $|K|\leq\omega_d R_K^d\leq\omega_d(\operatorname{diam}K)^d$. Let $B$ be the closed ball concentric with this smallest enclosing ball and of radius
\begin{equation*}
\rho=\max\!\big(R_K,\,(2|K|/\omega_d)^{1/d}\big).
\end{equation*}
Then $B\supseteq K$, and $|B|=\omega_d\rho^d\geq 2|K|$, so that
\begin{equation*}
|B\setminus K|=|B|-|K|\geq\tfrac12|B|.
\end{equation*}
Moreover $(2|K|/\omega_d)^{1/d}\leq 2^{1/d}\operatorname{diam}K$ and $R_K\leq\operatorname{diam}K$, whence $\rho\leq 2^{1/d}\operatorname{diam}K$ and therefore
\begin{equation*}
|B|=\omega_d\rho^d\leq 2\omega_d(\operatorname{diam}K)^d.
\end{equation*}

Because $\varphi$ vanishes on $B\setminus K$, the mean $\varphi_B$ satisfies
\begin{equation*}
|\varphi_B|\,\frac{|B\setminus K|}{|B|}=\frac{1}{|B|}\int_{B\setminus K}|\varphi_B|\,d\mx=\frac{1}{|B|}\int_{B\setminus K}|\varphi-\varphi_B|\,d\mx\leq\frac{1}{|B|}\int_{B}|\varphi-\varphi_B|\,d\mx\leq\|\varphi\|_{\mathrm{BMO}(\Rd)},
\end{equation*}
and $|B\setminus K|\geq\tfrac12|B|$ yields $|\varphi_B|\leq 2\|\varphi\|_{\mathrm{BMO}(\Rd)}$. The John-Nirenberg inequality, applied on the single ball $B$, gives
\begin{equation*}
\|\varphi-\varphi_B\|_{\mathrm{L}^r(B)}\leq C_d\, r\,|B|^{1/r}\,\|\varphi\|_{\mathrm{BMO}(\Rd)}.
\end{equation*}
Since $K\subseteq B$, the triangle inequality in $\mathrm{L}^r(K)$ and $|K|\leq|B|$ give
\begin{equation*}
\|\varphi\|_{\mathrm{L}^r(K)}\leq\|\varphi-\varphi_B\|_{\mathrm{L}^r(B)}+|\varphi_B|\,|K|^{1/r}\leq\big(C_d\, r+2\big)\,|B|^{1/r}\,\|\varphi\|_{\mathrm{BMO}(\Rd)}.
\end{equation*}
Finally, $|B|^{1/r}\leq(2\omega_d)^{1/r}(\operatorname{diam}K)^{d/r}$ and $C_d\, r+2\leq(C_d+2)r$, so that
\begin{equation*}
\|\varphi\|_{\mathrm{L}^r(K)}\leq C_d'\, r\,(\operatorname{diam}K)^{d/r}\,\|\varphi\|_{\mathrm{BMO}(\Rd)},
\end{equation*}
with $C_d'=(C_d+2)\max\!\big(1,2\omega_d\big)$ depending only on the dimension. The stated dependence of $C_{K_\varphi}$
then follows from the derivation of the bound in Theorem \ref{thm:h_dist_vmo}.
\end{proof}

\begin{remark}\label{rem:tubular}
The appearance of the diameter, rather than the Lebesgue measure $|K|$, is intrinsic to the argument: any ball $B$ containing $K$
has radius at least $R_K\geq\tfrac12\operatorname{diam}K$, hence $|B|\geq\omega_d(\tfrac12\operatorname{diam}K)^d$, however small $|K|$ may be.
For families that are thin relative to their diameter this gap is unbounded. For instance the tubular neighbourhoods
$K_n=[0,n]\times[-\varepsilon,\varepsilon]^{d-1}$ (with $\varepsilon>0$ fixed) satisfy $|K_n|=2^{d-1}\varepsilon^{d-1}n$, while every
enclosing ball has radius $\geq n/2$ and thus measure $\gtrsim n^d$; the ratio $|B|/|K_n|$ diverges as $n\to\infty$. Consequently no
estimate of the form $\|\varphi\|_{\mathrm{L}^r(K)}\leq C_d\, r\,|K|^{1/r}\|\varphi\|_{\mathrm{BMO}(\Rd)}$ with a purely dimensional
constant can be obtained by the method above, and the volume-based formulation of an earlier version of this work must be replaced by
the diameter-based one. We thank the anonymous referee for pointing out this defect.
\end{remark}

\begin{remark}
This makes precise the third structural prerequisite of Remark \ref{rem:prerequisites}: because $C_{K_\varphi}$ grows with the diameter
of the support, $D$ is genuinely discontinuous for the global $\mathrm{BMO}(\Rd)$ norm, and only the localized boundedness afforded by
the strict LF-space topology renders it continuous.
\end{remark}

\begin{conjecture}\label{conj:volume}
It remains open to us whether the Lebesgue measure may, after all, replace the diameter in Proposition \ref{prop:JN_constant}.
The counterexample of Remark \ref{rem:tubular} defeats the enclosing-ball proof, but it does not exhibit a single function $\varphi$
that violates the inequality itself. We conjecture that the volume-form estimate
\begin{equation*}
\|\varphi\|_{\mathrm{L}^r(K)}\leq C_d\, r\,|K|^{1/r}\,\|\varphi\|_{\mathrm{BMO}(\Rd)}
\end{equation*}
nevertheless holds for every compact $K\subset\Rd$ and every $\varphi\in\mathrm{VMO}(\Rd)$ supported in $K$, with $C_d$ depending only
on the dimension, so that the diameter in Proposition \ref{prop:JN_constant} is an artifact of the method rather than of the estimate.
We can at present neither prove this nor produce a counterexample: on any support thin enough to separate $|K|$ from $(\operatorname{diam}K)^d$,
the zero-extension of $\varphi$ across the unavoidably large boundary already forces $\|\varphi\|_{\mathrm{BMO}(\Rd)}\gtrsim\|\varphi\|_{\mathrm{L}^\infty}$,
and in every example we have examined this growth of the seminorm offsets the apparent gain in $|K|^{1/r}$. Should the conjecture hold,
Proposition \ref{prop:JN_constant} and every continuity statement depending on it would remain valid verbatim, with $(\operatorname{diam}K)^{d/r}$
replaced by $|K|^{1/r}$; should it fail, the diameter dependence established above is sharp.
\end{conjecture}

\subsection{Compatibility with Fractional Sobolev Regularity}
The strict LF construction is compatible with fractional Sobolev regularity, providing the justification for evaluating non-local
fractional Dirichlet forms without generating unphysical boundary singularities. We stress at the outset that vanishing mean oscillation
does \emph{not} by itself confer any fractional Sobolev smoothness: there exist bounded $\mathrm{VMO}$ functions belonging to no
$\mathrm{W}^{s,p}_{loc}(\Rd)$ with $s > 0$, so no blanket inclusion $\mathrm{VMO}_c(\Rd) \hookrightarrow \mathrm{W}^{s,p}_{loc}(\Rd)$
can hold. What the compact-support construction does guarantee is the absence of the zero-extension pathologies that afflict fractional
Sobolev spaces on bounded domains.

\begin{proposition}\label{prop:frac_sobolev}
Let $s \in (0, 1)$ and $p \in (1, \infty)$, and let $K \subset \Rd$ be compact. Equip $\mathrm{VMO}_K(\Rd) \cap \mathrm{W}^{s,p}(\Rd)$
with the graph norm $\|\cdot\|_{\mathrm{BMO}(\Rd)} + \|\cdot\|_{\mathrm{W}^{s,p}(\Rd)}$. Then the identity map
\begin{equation*}
\iota: \mathrm{VMO}_K(\Rd) \cap \mathrm{W}^{s,p}(\Rd) \hookrightarrow \mathrm{W}^{s,p}(\Rd)
\end{equation*}
is a continuous inclusion, and the extension of such a function by zero to all of $\Rd$ coincides with the function itself.
\end{proposition}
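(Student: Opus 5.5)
The statement is essentially tautological once the graph norm is written down, so the plan is to make the two assertions explicit rather than to invoke any deep estimate.

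\textbf{Continuity of $\iota$.} For $f\in\mathrm{VMO}_K(\Rd)\cap\mathrm{W}^{s,p}(\Rd)$ the graph norm dominates each summand. Hence
\begin{equation*}
\|\iota f\|_{\mathrm{W}^{s,p}(\Rd)}=\|f\|_{\mathrm{W}^{s,p}(\Rd)}\leq\|f\|_{\mathrm{BMO}(\Rd)}+\|f\|_{\mathrm{W}^{s,p}(\Rd)},
\end{equation*}
so $\iota$ is a bounded linear map of norm at most one. Injectivity needs a short check, since $\mathrm{BMO}$ elements are classes modulo constants. On $\mathrm{VMO}_K(\Rd)$ the representative is fixed by the support condition: a constant function supported in a compact set vanishes a.e. Thus the map to the genuine function class in $\mathrm{W}^{s,p}(\Rd)$ is well defined and injective. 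I would note, as in the proof of Theorem \ref{thm:h_dist_vmo}, that the $\mathrm{BMO}$ seminorm is a genuine norm on $\mathrm{VMO}_K(\Rd)$. If one also wants the intersection to be complete, take a Cauchy sequence in the graph norm. It converges in $\mathrm{VMO}_K(\Rd)$, which is closed by Section \ref{sec:notation}, and also in $\mathrm{W}^{s,p}(\Rd)$. Both modes of convergence imply $\mathrm{L}^1_{loc}$ convergence, using Proposition \ref{prop:JN_constant} for the $\mathrm{BMO}$ side, so the two limits agree a.e.

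\textbf{Zero extension.} The point is that the function is already defined on all of $\Rd$ and vanishes a.e. off $K$. Its zero extension from any open set $U\supset K$ therefore coincides with $f$ a.e. Consequently the Gagliardo seminorm is computed globally and includes the cross terms $\int_U\int_{\Rd\setminus U}|f(\mx)|^p|\mx-\mathbf{y}|^{-d-sp}$. These terms are finite because they are part of the finite quantity $\|f\|_{\mathrm{W}^{s,p}(\Rd)}$. No Hardy-type boundary term, of the kind needed for intrinsically defined $\mathrm{W}^{s,p}(U)$ when $sp\geq 1$, can appear.

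\textbf{Main obstacle.} The only delicate step is the bookkeeping of representatives, namely showing that the $\mathrm{BMO}$ class and the $\mathrm{L}^p$ class select the same function. I would handle this with the support normalisation together with the local $\mathrm{L}^r$ control of Proposition \ref{prop:JN_constant}.
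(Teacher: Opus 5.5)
Your proposal is correct and is essentially the paper's argument: continuity of $\iota$ is read off from the graph norm via $\|\iota f\|_{\mathrm{W}^{s,p}(\Rd)}\leq\|f\|_{\mathrm{BMO}(\Rd)}+\|f\|_{\mathrm{W}^{s,p}(\Rd)}$, and the zero-extension claim is trivial because $f$ is already a function on $\Rd$ vanishing a.e.\ off $K$. Your extra steps are sound but not in the paper, namely fixing the representative modulo constants by the support condition and the optional completeness check via Proposition~\ref{prop:JN_constant}; your observation that the global Gagliardo seminorm contains finite cross terms is also more precise than the paper's phrase that it ``equals the intrinsic one''.
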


\begin{proof}
A function $\varphi$ in the domain is supported in the fixed compact set $K$, so its trivial extension by zero to $\Rd$ is $\varphi$ itself;
no artificial interface is created at $\partial K$ and the global Gagliardo seminorm equals the intrinsic one. Continuity of $\iota$ is then
immediate from the definition of the graph norm, since
$\|\iota\varphi\|_{\mathrm{W}^{s,p}(\Rd)} = \|\varphi\|_{\mathrm{W}^{s,p}(\Rd)} \leq \|\varphi\|_{\mathrm{BMO}(\Rd)} + \|\varphi\|_{\mathrm{W}^{s,p}(\Rd)}$.
Consequently, global fractional operators defined via the Fourier transform act on such functions without boundary blow-up.
\end{proof}

\begin{remark}
Because the functions naturally decay to zero outside their compact support, extending them to $\Rd$ introduces no boundary jumps.
This ensures that global fractional differential operators, defined via the Fourier transform, can act continuously on sequences in
$\mathrm{VMO}_c(\Rd)$ without blowing up across the support boundaries.
\end{remark}

\subsection{Density of Bounded Functions in the Banach Steps}
The extension of the H-distribution $D$ from $\mathrm{L}^\infty(\Rd)\cap\mathrm{VMO}_c(\Rd)$, on which it is defined by the integral
formula, to the whole test space $\mathrm{VMO}_c(\Rd)$ rests on the density of the bounded functions within each Banach step. Because
a step space $\mathrm{VMO}_K(\Rd)$ contains genuinely unbounded functions (a $\mathrm{VMO}$ function may carry a local $\log\log$-type singularity),
this density is not a formality: it is not evident that an unbounded $\mathrm{VMO}$ function is approximable in $\mathrm{BMO}$ norm by its truncations,
since boundedness of the truncation operator on $\mathrm{BMO}$ (which is classical) is weaker than convergence of the truncations.
We supply a proof, in which the compact support plays a decisive role.

\begin{proposition}\label{prop:bounded_density}
For every compact set $K\subset\Rd$, the bounded functions $\mathrm{L}^\infty(\Rd)\cap\mathrm{VMO}_K(\Rd)$ are dense in the Banach step $\mathrm{VMO}_K(\Rd)$ in the $\mathrm{BMO}$ norm.
\end{proposition}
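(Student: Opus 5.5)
The plan is to approximate $\varphi\in\mathrm{VMO}_K(\Rd)$ by its truncations. Treating real and imaginary parts separately, we may assume $\varphi$ is real-valued. For $M>0$ set
\begin{equation*}
T_M\varphi=\max(-M,\min(M,\varphi)), \qquad S_M\varphi=\varphi-T_M\varphi=\mathrm{sgn}(\varphi)\,(|\varphi|-M)_+ .
\end{equation*}
Both $T_M\varphi$ and $S_M\varphi$ vanish outside $K$, and $T_M\varphi$ is bounded by $M$. It then suffices to show two things: that $\|S_M\varphi\|_{\mathrm{BMO}(\Rd)}\to0$ as $M\to\infty$, and that $T_M\varphi$ actually lies in $\mathrm{VMO}(\Rd)$ and not merely in $\mathrm{BMO}(\Rd)$. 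The route avoids comparing truncations of $\varphi$ with truncations of a $\mathrm{C}_c$ approximant: $T_M\varphi-T_Mg$ is not controlled in $\mathrm{BMO}$ by $\varphi-g$, and I expect that to be the main obstacle. We bypass it with a scale splitting in which the compact support enters through an $\mathrm{L}^1$ bound.

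\textbf{Oscillation bound.} Since $s\mapsto S_M(s)$ is $1$-Lipschitz, we have $|S_M\varphi(\mx)-S_M\varphi(\mathbf{y})|\leq|\varphi(\mx)-\varphi(\mathbf{y})|$. Averaging over $\mx,\mathbf{y}\in B$ gives, for every ball $B$,
\begin{equation*}
\frac{1}{|B|}\int_B|S_M\varphi-(S_M\varphi)_B|\,d\mx\leq 2\,\frac1{|B|}\int_B|\varphi-\varphi_B|\,d\mx .
\end{equation*}
Now use that $\varphi\in\mathrm{VMO}$, being a $\mathrm{BMO}$-limit of $\mathrm{C}_c$ functions, has vanishing small-scale modulus $\eta(\delta)=\sup_{r(B)\le\delta}\frac1{|B|}\int_B|\varphi-\varphi_B|\to0$. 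Fix $\varepsilon>0$ and choose $\delta$ with $2\eta(\delta)<\varepsilon$; this makes every ball of radius at most $\delta$ contribute less than $\varepsilon$, uniformly in $M$.

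\textbf{Large balls.} For balls of radius greater than $\delta$ we use the crude bound
\begin{equation*}
\frac1{|B|}\int_B|S_M\varphi-(S_M\varphi)_B|\leq \frac{2}{|B|}\int_B|S_M\varphi|\leq \frac{2}{\omega_d\delta^d}\,\|S_M\varphi\|_{\mathrm{L}^1(K)} .
\end{equation*}
By Proposition \ref{prop:JN_constant}, $\varphi\in\mathrm{L}^r(K)\subset\mathrm{L}^1(K)$, and this is exactly where the compact support is used. Since $|S_M\varphi|\leq|\varphi|\,\mathbf 1_{\{|\varphi|>M\}}$, dominated convergence gives $\|S_M\varphi\|_{\mathrm{L}^1}\to0$. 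Hence $\limsup_M\|S_M\varphi\|_{\mathrm{BMO}}\leq\varepsilon$, and letting $\varepsilon\to0$ shows $T_M\varphi\to\varphi$ in $\mathrm{BMO}(\Rd)$.

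\textbf{Membership of $T_M\varphi$ in $\mathrm{VMO}(\Rd)$.} The function $\psi=T_M\varphi$ is bounded with support in $K$. Because $T_M$ is also $1$-Lipschitz, the oscillation bound above shows that $\psi$ inherits the small-scale modulus $2\eta$. We then mollify. Each $\psi*\eta_\epsilon$ lies in $\mathrm{C}_c(\Rd)$, and we show $\psi*\eta_\epsilon\to\psi$ in $\mathrm{BMO}$ by the same two-scale splitting:
\begin{itemize}
\item on balls of radius at most $\delta$, the oscillation of $\psi*\eta_\epsilon-\psi$ is controlled by $\sup_{|\mathbf h|\le\epsilon}$ of translated small-ball oscillations of $\psi$, all at most $4\eta(\delta)$ (using that the oscillation is translation invariant and convex in the function);
\item on balls of radius greater than $\delta$, the oscillation is at most $C\delta^{-d}\|\psi*\eta_\epsilon-\psi\|_{\mathrm{L}^1}\to0$.
\end{itemize}
Thus $\psi$ lies in the $\mathrm{C}_c$-closure, so $\psi\in\mathrm{L}^\infty(\Rd)\cap\mathrm{VMO}_K(\Rd)$, which completes the density claim.
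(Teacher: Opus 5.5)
Your argument is correct and is essentially the paper's proof. You truncate at level $M$, control small balls uniformly in $M$ through the inherited $\mathrm{VMO}$ modulus (you get the constant $2$ directly from the $1$-Lipschitz map $S_M$, where the paper gets $3$ via the triangle inequality), and control large balls by $\frac{2}{|B|}\int_K(|\varphi|-M)_+$, which tends to zero by dominated convergence because of the compact support.

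Your extra mollification step, showing that $T_M\varphi$ lies in the $\mathrm{C}_c$-closure, is a genuine improvement, as is the reduction to real-valued functions. The paper deduces $f_M\in\mathrm{VMO}_K(\Rd)$ from the vanishing small-scale modulus alone. Under its Coifman--Weiss convention, that deduction also needs exactly the large-ball control you supply.
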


\begin{proof}
For $M>0$ let $T_M(t)=\max(-M,\min(M,t))$ be the truncation at level $M$; it is $1$-Lipschitz and satisfies $T_M(0)=0$. Given $f\in\mathrm{VMO}_K(\Rd)$, set $f_M=T_M\circ f$.

\emph{Step 1: $f_M\in\mathrm{L}^\infty(\Rd)\cap\mathrm{VMO}_K(\Rd)$.} By construction $\|f_M\|_{\mathrm{L}^\infty(\Rd)}\leq M$,
and $T_M(0)=0$ forces $\{f_M\neq 0\}\subseteq\{f\neq 0\}$, whence $\mathrm{supp}\,f_M\subseteq\mathrm{supp}\,f\subseteq K$.
For any ball $B$ and any constant $c$ one has the elementary estimate $\frac{1}{|B|}\int_B|g-g_B|\leq\frac{2}{|B|}\int_B|g-c|$;
applying it with $g=f_M$ and $c=T_M(f_B)$, and using the $1$-Lipschitz bound $|T_M(f)-T_M(f_B)|\leq|f-f_B|$, gives
\begin{equation}\label{eq:trunc_osc}
\frac{1}{|B|}\int_B|f_M-(f_M)_B|\,d\mx\leq\frac{2}{|B|}\int_B|f-f_B|\,d\mx.
\end{equation}
Thus the mean oscillation of $f_M$ over any ball is at most twice that of $f$; in particular $\|f_M\|_{\mathrm{BMO}(\Rd)}\leq 2\|f\|_{\mathrm{BMO}(\Rd)}$,
and since $f\in\mathrm{VMO}$ the modulus of mean oscillation of $f_M$ vanishes as the radius tends to zero. Hence $f_M\in\mathrm{VMO}_K(\Rd)$.

\emph{Step 2: $f_M\to f$ in $\mathrm{BMO}(\Rd)$.} Write $r_M=f-f_M$; then $r_M$ is supported in $K$ and $|r_M|=(|f|-M)_+$.
Let $\omega_f(t)=\sup_{|B|\leq t}\frac{1}{|B|}\int_B|f-f_B|$ be the mean-oscillation modulus of $f$, so that $\omega_f(t)\to 0$ as $t\to 0^+$
because $f\in\mathrm{VMO}$. Combining the seminorm triangle inequality with \eqref{eq:trunc_osc} applied to $f$ and to $f_M$, for every ball $B$,
\begin{equation}\label{eq:small_balls}
\frac{1}{|B|}\int_B|r_M-(r_M)_B|\,d\mx\leq\frac{1}{|B|}\int_B|f-f_B|+\frac{1}{|B|}\int_B|f_M-(f_M)_B|\leq 3\,\omega_f(|B|),
\end{equation}
uniformly in $M$. On the other hand, using $\frac{1}{|B|}\int_B|r_M-(r_M)_B|\leq\frac{2}{|B|}\int_B|r_M|$ together with the compact support of $r_M$,
\begin{equation}\label{eq:large_balls}
\frac{1}{|B|}\int_B|r_M-(r_M)_B|\,d\mx\leq\frac{2}{|B|}\int_{B\cap K}(|f|-M)_+\,d\mx\leq\frac{2}{|B|}\int_K(|f|-M)_+\,d\mx.
\end{equation}
Fix $\varepsilon>0$. By \eqref{eq:small_balls} choose $\delta>0$ with $3\,\omega_f(\delta)<\varepsilon$, so that the oscillation of $r_M$
over every ball with $|B|\leq\delta$ is below $\varepsilon$, uniformly in $M$. For balls with $|B|>\delta$, \eqref{eq:large_balls} bounds
the oscillation by $\frac{2}{\delta}\int_K(|f|-M)_+$. Since $f\in\mathrm{BMO}(\Rd)$ is locally integrable, $(|f|-M)_+\leq|f|\in\mathrm{L}^1(K)$
decreases to $0$ pointwise, and dominated convergence yields $\int_K(|f|-M)_+\to 0$ as $M\to\infty$; choose $M_0$ with
$\frac{2}{\delta}\int_K(|f|-M)_+<\varepsilon$ for $M\geq M_0$. Taking the supremum over all balls, $\|f-f_M\|_{\mathrm{BMO}(\Rd)}<\varepsilon$
for $M\geq M_0$. As each $f_M\in\mathrm{L}^\infty(\Rd)\cap\mathrm{VMO}_K(\Rd)$, the density follows.
\end{proof}

\begin{remark}
The compact support is essential in Step 2: it is what turns the tail term into the fixed integral $\int_K(|f|-M)_+$, which is finite and
vanishes as $M\to\infty$. Over an unbounded support the truncations $f_M$ need not converge to $f$ in $\mathrm{BMO}$ norm, which is one
more instance of the principle, running throughout this paper, that it is the localization to compact supports, and not any intrinsic
property of $\mathrm{VMO}$ alone, that makes the construction work. This proposition is the density invoked at the end of the proof of
Theorem \ref{thm:h_dist_vmo} to extend $D$ by continuity from $\mathrm{L}^\infty(\Rd)\cap\mathrm{VMO}_c(\Rd)$ to all of $\mathrm{VMO}_c(\Rd)$.
\end{remark}

\subsection{Strict Density of Smooth Test Functions}
In classical harmonic analysis, the space of smooth test functions $\mathrm{C}_c^\infty(\Rd)$ is not dense in $\mathrm{L}^\infty(\Rd)$
or $\mathrm{BMO}(\Rd)$, which obstructs standard smooth approximation arguments. The LF-space topology used here resolves this obstruction.

\begin{proposition}
The space of smooth functions with compact support, $\mathcal{D}(\Rd) = \mathrm{C}_c^\infty(\Rd)$, is dense in $\mathrm{VMO}_c(\Rd)$
under the inductive limit topology. In particular, it is dense, in the $\mathrm{BMO}$ topology, in the bounded subspace $\mathrm{L}^\infty(\Rd)\cap\mathrm{VMO}_c(\Rd)$.
\end{proposition}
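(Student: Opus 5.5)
The plan is to use mollification and to check convergence inside a single Banach step; the LF-topology statement then follows from the universal property. Fix $f\in\mathrm{VMO}_c(\Rd)$, so that $f\in\mathrm{VMO}_{K_n}(\Rd)$ for some $n$. Let $\eta_\varepsilon$ be a standard mollifier supported in $B(0,\varepsilon)$ with $\int\eta_\varepsilon=1$, and set $f_\varepsilon=f\ast\eta_\varepsilon$.

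\textbf{Step 1 (the approximants are test functions in one step).} Since $f\in\mathrm{L}^1_{loc}$ with compact support, $f_\varepsilon\in\mathrm{C}_c^\infty(\Rd)$ and $\mathrm{supp}\,f_\varepsilon\subseteq K_n+\overline{B(0,\varepsilon)}$. Because $K_n\subset\mathrm{Int}\,K_{n+1}$ and $K_n$ is compact, there is $\varepsilon_0>0$ with $K_n+\overline{B(0,\varepsilon_0)}\subseteq K_{n+1}$. Hence for $\varepsilon<\varepsilon_0$ both $f$ and every $f_\varepsilon$ lie in the fixed Banach step $\mathrm{VMO}_{K_{n+1}}(\Rd)$. (That $f_\varepsilon\in\mathrm{VMO}$ follows from Step 2, or directly from smoothness and compact support.)

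\textbf{Step 2 (BMO convergence).} This is the main point; the paper merely asserts it in Section \ref{sec:notation}, so I would prove it via the Coifman--Weiss definition of $\mathrm{VMO}$. Given $\delta>0$, choose $g\in\mathrm{C}_c(\Rd)$ with $\|f-g\|_{\mathrm{BMO}}<\delta$; no support condition on $g$ is needed. Split
\[
f-f_\varepsilon=(f-g)-(f-g)\ast\eta_\varepsilon+(g-g\ast\eta_\varepsilon).
\]
\begin{itemize}
\item The $\mathrm{BMO}$ seminorm is translation invariant and convex, and $h\ast\eta_\varepsilon=\int h(\cdot-y)\eta_\varepsilon(y)\,dy$ is an average of translates. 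Bounding the oscillation on each ball by Fubini/Minkowski gives $\|(f-g)\ast\eta_\varepsilon\|_{\mathrm{BMO}}\le\|f-g\|_{\mathrm{BMO}}<\delta$.
\item Since $g$ is uniformly continuous, $g\ast\eta_\varepsilon\to g$ uniformly. Using $\|h\|_{\mathrm{BMO}}\le 2\|h\|_{\mathrm{L}^\infty}$, the last term tends to $0$.
\end{itemize}
Thus $\limsup_{\varepsilon\to0}\|f-f_\varepsilon\|_{\mathrm{BMO}}\le 2\delta$ for every $\delta>0$, so $f_\varepsilon\to f$ in the norm of $\mathrm{VMO}_{K_{n+1}}(\Rd)$.

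\textbf{Step 3 (transfer to the LF-space and to the bounded subspace).} The inclusion $i_{n+1}:\mathrm{VMO}_{K_{n+1}}(\Rd)\to\mathrm{VMO}_c(\Rd)$ is continuous, so $f_\varepsilon\to f$ in the inductive limit topology. Hence every neighbourhood of $f$ meets $\mathcal{D}(\Rd)$, which gives density. For the second claim, take $f\in\mathrm{L}^\infty(\Rd)\cap\mathrm{VMO}_c(\Rd)$. The same $f_\varepsilon$ are smooth, satisfy $\|f_\varepsilon\|_{\mathrm{L}^\infty}\le\|f\|_{\mathrm{L}^\infty}$, and converge to $f$ in $\mathrm{BMO}$ by Step 2, so $\mathcal{D}(\Rd)\subset\mathrm{L}^\infty(\Rd)\cap\mathrm{VMO}_c(\Rd)$ is $\mathrm{BMO}$-dense there.

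The only delicate step is Step 2. In particular, one must not use the fact that $g$ itself is supported in $K_n$; it need not be. The support bookkeeping is carried entirely by $f_\varepsilon$ in Step 1.
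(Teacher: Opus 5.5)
Your proof is correct and follows the paper's route: mollify, place all approximants in a single Banach step, obtain $\mathrm{BMO}$ convergence, pass to the inductive limit topology through the continuous inclusion, and use the $\mathrm{L}^\infty$ bound of mollifiers for the bounded subspace. The one difference is that you actually prove $f\ast\eta_\varepsilon\to f$ in $\mathrm{BMO}$, by approximating with $\mathrm{C}_c$ functions, using that convolution is a contraction for the $\mathrm{BMO}$ seminorm, and using uniform convergence for the $\mathrm{C}_c$ approximant; the paper only attributes this convergence to the Coifman--Weiss characterization.
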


\begin{proof}
Let $f \in \mathrm{VMO}_c(\Rd)$. By definition, there exists a compact set $K \subset \Rd$ such that $\mathrm{supp}(f) \subseteq K$,
meaning $f$ belongs to the Banach step space $\mathrm{VMO}_K(\Rd)$. 

Let $\eta_\epsilon$ be a standard smooth mollifier. The convolution $f_\epsilon = f \ast \eta_\epsilon$ belongs to $\mathrm{C}_c^\infty(K_\epsilon)$,
where $K_\epsilon$ is the $\epsilon$-neighborhood of $K$. Because $K_\epsilon$ is strictly contained within a slightly larger compact set $K'$ for
all sufficiently small $\epsilon$, the sequence $(f_\epsilon)$ is entirely contained within the step space $\mathrm{VMO}_{K'}(\Rd)$.

By the characterization of $\mathrm{VMO}(\Rd)$ as the closure of $\mathrm{C}_c(\Rd)$ in the $\mathrm{BMO}$ norm, the mollified sequence converges
to the original function in the $\mathrm{BMO}$ norm: $\|f_\epsilon - f\|_{\mathrm{BMO}(\Rd)} \to 0$ as $\epsilon \to 0$. Since the $\mathrm{BMO}$
norm is the norm of the Banach step $\mathrm{VMO}_{K'}(\Rd)$, and $\mathrm{VMO}_{K'}(\Rd)$ is a generating subspace of the strict inductive limit,
the sequence $(f_\epsilon)$ converges to $f$ in the topology of $\mathrm{VMO}_c(\Rd)$.

Finally, if in addition $f \in \mathrm{L}^\infty(\Rd)$, then standard mollifier properties give
$\|f_\epsilon\|_{\mathrm{L}^\infty(\Rd)} \leq \|f\|_{\mathrm{L}^\infty(\Rd)}$, so the approximating sequence stays within the bounded subspace
and converges to $f$ in the $\mathrm{BMO}$ topology. We stress that this is convergence in the inductive limit ($\mathrm{BMO}$-based) topology only;
the mollification need not converge in the $\mathrm{L}^\infty$ norm, and no such convergence is claimed or required.
\end{proof}

\begin{remark}
This density is a useful tool. It allows multiplier bounds, localization principles, or fractional defect representations to be proven first
on classical smooth Schwartz functions, and then passed to the limit in $\mathrm{VMO}_c(\Rd)$ via density, bridging the gap between rough
$\mathrm{VMO}$ coefficients and smooth analytical techniques.
\end{remark}

\subsection{The Fr\'echet Dual and Local Hardy Spaces}
The topological dual of the global space $\mathrm{VMO}(\Rd)$ is the Hardy space $\mathcal{H}^1(\Rd)$. Restricting the test functions to the
compactly supported LF-space changes the dual space, which matters for the existence of non-trivial macroscopic defect measures.

\begin{proposition}\label{prop:frechet_dual}
The topological dual space of the strict LF-space $\mathrm{VMO}_c(\Rd)$ is a Fr\'echet space characterized by local Hardy space distributions.
\end{proposition}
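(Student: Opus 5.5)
The plan is to identify the dual of the strict LF-space $\mathrm{VMO}_c(\Rd)=\varinjlim \mathrm{VMO}_{K_n}(\Rd)$ with the projective limit of the duals of its Banach steps, and then describe each step dual through the known duality $\mathrm{VMO}(\Rd)'=\mathcal{H}^1(\Rd)$.

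First, I use the universal property recalled in Section \ref{sec:notation}. A linear functional $T$ on $\mathrm{VMO}_c(\Rd)$ is continuous if and only if each restriction $T_n=T|_{\mathrm{VMO}_{K_n}(\Rd)}$ is continuous. The restrictions are compatible: $T_{n+1}$ restricts to $T_n$. Conversely, a compatible family $(T_n)$ defines a unique continuous $T$. Hence $\mathrm{VMO}_c(\Rd)'\cong\varprojlim \mathrm{VMO}_{K_n}(\Rd)'$ algebraically. I equip this space with the projective topology given by the seminorms $p_n(T)=\|T_n\|_{\mathrm{VMO}_{K_n}(\Rd)'}$. For a strict LF-space, bounded sets are contained and bounded in a single step (Dieudonn\'e--Schwartz). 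This topology is therefore the strong dual topology. A countable projective limit of Banach spaces is complete and metrizable, so the dual is Fr\'echet.

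Second, I identify each step dual. Since $\mathrm{VMO}_{K_n}(\Rd)$ is a closed subspace of $\mathrm{VMO}(\Rd)$, Hahn--Banach gives
\begin{equation*}
\mathrm{VMO}_{K_n}(\Rd)'\cong\mathcal{H}^1(\Rd)/\mathrm{VMO}_{K_n}(\Rd)^\perp .
\end{equation*}
The annihilator consists of those $h\in\mathcal{H}^1(\Rd)$ whose pairing with every element of $\mathrm{VMO}(\Rd)$ supported in $K_n$ vanishes. Since $\mathcal{D}(\mathrm{Int}K_n)$ is contained in the step, such $h$ vanish as distributions on $\mathrm{Int}K_n$. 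Modulo this boundary subtlety, the step dual is therefore the space of restrictions to $K_n$ of $\mathcal{H}^1$ distributions, carrying the quotient norm.

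Passing to the projective limit, an element of $\mathrm{VMO}_c(\Rd)'$ is a distribution $T\in\mathcal{D}'(\Rd)$ that coincides on each $\mathrm{Int}K_n$ with some element of $\mathcal{H}^1(\Rd)$. In other words, it is a locally Hardy distribution, lying in $\mathcal{H}^1_{loc}(\Rd)$. A localized atom no longer needs to satisfy the global moment condition. This is the $h^1$-type behaviour of Goldberg's local Hardy space, which allows non-trivial localized functionals.

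The main obstacle is this identification step. Concretely, one must show that the quotient $\mathcal{H}^1/\mathrm{VMO}_{K_n}^\perp$ is realized by distributions determined by their behaviour near $K_n$, and that the restriction maps are consistent. I would handle it by multiplying by cut-offs $\chi_n\in\mathcal{D}(\mathrm{Int}K_{n+1})$ equal to $1$ on $K_n$. To control $\chi_n h$ in a local Hardy norm, I would use the Banach-algebra bound from Section \ref{sec:notation} to show that multiplication by $\chi_n$ maps $\mathrm{VMO}_{K_n}$ into itself. I would also lean on the cofinality from Proposition \ref{prop:exhaustion} to absorb the interior versus compact mismatch. If the precise characterization proves elusive, I would settle for the weaker statement actually needed downstream: the dual is Fr\'echet and its elements are locally $\mathcal{H}^1$ distributions.
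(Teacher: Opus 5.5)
Your proposal follows the paper's route: the dual of the strict LF-space is the projective limit of the step duals, hence Fr\'echet, and the step duals are read off from $\mathrm{VMO}(\Rd)'=\mathcal{H}^1(\Rd)$, with the global moment condition lost under localisation. It is more careful than the paper's own argument, notably in using Dieudonn\'e--Schwartz to pin down the strong dual topology and in landing in $\mathcal{H}^1_{loc}$ rather than the paper's $h^1(\Rd)$ (the functional $\varphi\mapsto\int\varphi$ is continuous on every step, yet $1\notin h^1(\Rd)$).

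The boundary point you flag closes directly, without cut-offs or the Banach-algebra bound. The pairing equals $\int h\varphi$ for bounded $\varphi$, and bounded functions are dense in each step (Proposition \ref{prop:bounded_density}). Hence $\{h=0 \text{ a.e.\ on } K_n\}\subseteq\mathrm{VMO}_{K_n}(\Rd)^\perp\subseteq\{h=0\text{ a.e.\ on }\mathrm{Int}\,K_n\}$, and the nesting $K_n\subset\mathrm{Int}\,K_{n+1}$ removes the remaining ambiguity in the projective limit.
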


\begin{proof}
By the standard duality theory for inductive limits, the topological dual of a strict inductive limit of Banach spaces
$\mathrm{VMO}_c(\Rd) = \bigcup_n \mathrm{VMO}_{K_n}(\Rd)$ is topologically isomorphic to the projective limit of the dual spaces
$\mathrm{VMO}_{K_n}(\Rd)'$. Therefore, the dual space $\mathrm{VMO}_c(\Rd)'$ is a Fr\'echet space.

A linear functional $T$ acts continuously on this LF-space if and only if its restriction to every Banach step $\mathrm{VMO}_{K}(\Rd)$ is
continuous. The dual of the global $\mathrm{VMO}$ space is the classical Hardy space $\mathcal{H}^1(\Rd)$, whose elements are characterized
by specific cancellation properties (i.e., vanishing integral moments).

Because the functional $T$ is only required to be bounded on fixed compact supports $K$, it is not subject to the global moment condition
and acts as an element of the local Hardy space $h^1(\Rd)$ in the sense of Goldberg. 
\end{proof}

\begin{remark}
This duality clarifies why the $\mathrm{VMO}$ H-distribution framework is non-degenerate. Elements of the global Hardy space $\mathcal{H}^1(\Rd)$
must integrate to zero, so a functional constrained to the global Hardy space would satisfy a rigid cancellation condition. Because the LF-space
topology shifts the dual to a Fr\'echet space of \emph{local} Hardy distributions, the H-distributions are not subject to a global zero-integral
condition and can capture non-trivial localized microlocal concentrations. We caution that, unlike classical H-measures, these functionals are in
general signed (indeed complex-valued) and carry no positivity; the physical interpretation is that of a localized microlocal density of possibly
indefinite sign, not a non-negative energy measure.
\end{remark}

\subsection{Joint Continuity of Bilinear Functionals}
In Theorem \ref{thm:h_dist_vmo}, the H-distribution is defined as a bilinear functional acting on the product space
$\big(\mathrm{L}^\infty(\Rd)\cap\mathrm{VMO}_c(\Rd)\big) \times \mathrm{C}^\kappa(\mathrm{S}^{d-1})$. Establishing continuity on product
topologies involving inductive limits requires care, as separate continuity does not always guarantee joint continuity.
The strict LF-structure resolves this obstruction.

\begin{proposition}\label{prop:joint_cont}
Let $Y = \mathrm{C}^\kappa(\mathrm{S}^{d-1})$ be the Banach space of frequency symbols. A bilinear functional 
\begin{equation*}
D: \mathrm{VMO}_c(\Rd) \times Y \to \mathbb{C}
\end{equation*}
is jointly continuous if and only if for every compact set $K \subset \Rd$, its restriction to the Banach step space $\mathrm{VMO}_K(\Rd) \times Y$ is bounded.
\end{proposition}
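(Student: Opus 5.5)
The plan is to prove the two implications separately. The forward direction is routine. The converse rests on a structural fact about the product $E\times Y$: here $E=\mathrm{VMO}_c(\Rd)$ is a strict LF-space and $Y$ is a Banach space.

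\emph{Necessity.} Assume $D$ is jointly continuous on $E\times Y$. Each inclusion $i_K:\mathrm{VMO}_K(\Rd)\to E$ is continuous, so $D\circ(i_K\times\mathrm{id}_Y)$ is a jointly continuous bilinear form on the product of two Banach spaces. Joint continuity at the origin gives a neighbourhood $\{\|\varphi\|_{\mathrm{BMO}}\le\delta,\ \|\psi\|_{Y}\le\delta\}$ on which $|D|\le1$. Bilinearity then yields
\begin{equation*}
|D(\varphi,\psi)|\le\delta^{-2}\|\varphi\|_{\mathrm{BMO}(\Rd)}\|\psi\|_{Y}\qquad\text{for } \varphi\in\mathrm{VMO}_K(\Rd),\ \psi\in Y.
\end{equation*}
This is the required boundedness.

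\emph{Sufficiency.} Assume that for every compact $K$ there is $C_K$ with $|D(\varphi,\psi)|\le C_K\|\varphi\|_{\mathrm{BMO}}\|\psi\|_Y$ on $\mathrm{VMO}_K(\Rd)\times Y$. I will not try to prove joint continuity directly, since that would require describing neighbourhoods of the origin in the product. Instead I pass to the linear map
\begin{equation*}
T:E\to Y',\qquad T\varphi=D(\varphi,\cdot).
\end{equation*}
The hypothesis says $\|T\varphi\|_{Y'}\le C_K\|\varphi\|_{\mathrm{BMO}}$ on each step, so each restriction $T\circ i_K$ is continuous into the Banach space $Y'$ with its norm topology. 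By the universal property of the locally convex inductive limit, $T:E\to Y'_\beta$ is continuous. Hence there is a continuous seminorm $p$ on $E$ with $\|T\varphi\|_{Y'}\le p(\varphi)$. Such a $p$ exists because the closed unit ball of $Y'$ is absolutely convex, so its preimage under $T$ is an absolutely convex neighbourhood of $0$; take $p$ to be its gauge. It follows that
\begin{equation*}
|D(\varphi,\psi)|\le p(\varphi)\,\|\psi\|_Y\qquad\text{for all }(\varphi,\psi)\in E\times Y.
\end{equation*}
This estimate gives joint continuity. At the origin, $|D|<\varepsilon$ on $\{p<\sqrt\varepsilon\}\times\{\|\psi\|_Y<\sqrt\varepsilon\}$. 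At a general point $(\varphi_0,\psi_0)$, bilinearity gives
\begin{equation*}
D(\varphi,\psi)-D(\varphi_0,\psi_0)=D(\varphi-\varphi_0,\psi_0)+D(\varphi_0,\psi-\psi_0)+D(\varphi-\varphi_0,\psi-\psi_0),
\end{equation*}
and each of the three terms is controlled by the estimate above.

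The main obstacle is the step that upgrades the family of local bounds to one continuous seminorm $p$. In general, separately continuous bilinear forms on LF-spaces need not be jointly continuous, which is exactly the caveat the paper raises before the statement. What makes the argument work here is that the second factor $Y$ is normable: the bound is uniform over the unit ball of $Y$, so the problem reduces to continuity of a single linear map $E\to Y'$, and the universal property applies to that map. I will state this reduction explicitly and note that strictness of the inductive limit is used only through the fact that each $i_K$ is a topological embedding. Finally, the same argument applies verbatim on the subspace $\mathrm{L}^\infty(\Rd)\cap\mathrm{VMO}_c(\Rd)$ with the induced topology. This recovers the joint continuity claimed in Theorem \ref{thm:h_dist_vmo}.
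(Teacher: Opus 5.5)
Your proof is correct, but it follows a different route from the paper's.

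\textbf{The paper's route.} The paper notes that a strict LF-space is barrelled. It then invokes Banach--Steinhaus to pass from the step-wise bounds to joint continuity on the product. Its appeal to the universal property for the \emph{bilinear} map is only heuristic; barrelledness does the actual work.

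\textbf{Your route.} You curry in the normed factor. The step-wise bounds say exactly that $\varphi\mapsto D(\varphi,\cdot)$ is continuous from each $\mathrm{VMO}_K(\Rd)$ into the Banach space $Y'$. So the universal property of the locally convex inductive limit now applies to a genuinely linear map. This yields the continuous seminorm $p(\varphi)=\|D(\varphi,\cdot)\|_{Y'}$ with $|D(\varphi,\psi)|\le p(\varphi)\|\psi\|_Y$, which is joint continuity.

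\textbf{What each approach buys.} Your argument is more elementary and more general. It uses neither Baire category, nor barrelledness, nor strictness, only continuity of the inclusions $i_K$. So your remark that strictness enters through the embedding property of $i_K$ overstates even that. It works for any locally convex inductive limit of normed spaces. The barrelled route, carried out properly, gives a stronger statement on the hypothesis side: mere separate continuity of $D$ would suffice. Banach--Steinhaus on the barrelled space $\mathrm{VMO}_c(\Rd)$ upgrades the pointwise boundedness of $\{D(\cdot,\psi):\|\psi\|_Y\le 1\}$ to equicontinuity. Here that extra strength is not needed, because the hypothesis already gives the bound uniformly over the unit ball of $Y$ on each step.

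\textbf{A caution about your last sentence.} The universal property is not automatically inherited by a subspace carrying the induced topology. So the transfer to $\mathrm{L}^\infty(\Rd)\cap\mathrm{VMO}_c(\Rd)$ is not quite verbatim. It does go through here, because each $\mathrm{L}^\infty(\Rd)\cap\mathrm{VMO}_K(\Rd)$ is dense in $\mathrm{VMO}_K(\Rd)$ (Proposition \ref{prop:bounded_density}). You can therefore first extend $\varphi\mapsto D(\varphi,\cdot)$ step by step into the complete space $Y'$, and then restrict.
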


\begin{proof}
The space $\mathrm{VMO}_c(\Rd)$ is a strict LF-space. By the Baire category theorem, Fr\'echet spaces and strict
inductive limits of Fr\'echet spaces are barrelled \cite{NB}. 

By the universal property of inductive limits, a mapping from an LF-space into a locally convex space is continuous
if and only if its restriction to every generating step space is continuous. For a bilinear mapping evaluated on
$\mathrm{VMO}_K(\Rd) \times Y$, because both $\mathrm{VMO}_K(\Rd)$ and $Y$ are complete Banach spaces, joint continuity
is strictly equivalent to the existence of a uniform bounding constant $C_K > 0$ such that:
\begin{equation*}
|D(\varphi, \psi)| \leq C_K \|\varphi\|_{\mathrm{BMO}(\Rd)} \|\psi\|_Y.
\end{equation*}
Because the LF-space is barrelled and $Y$ is a Banach space, the Banach-Steinhaus theorem ensures that establishing this
bounded continuity on every individual step space is exactly the necessary and sufficient condition to guarantee joint
continuity on the global product space topology.
\end{proof}

\begin{remark}
This proposition justifies the functional bounds derived in Theorem \ref{thm:h_dist_vmo}. Because the localized John-Nirenberg
inequality establishes the existence of the bounding constant $C_{K_\varphi}$ for any fixed compact support $K_\varphi$,
the H-distribution $D$ is a jointly continuous bilinear functional on the chosen product topology.
\end{remark}

\subsection{The Projective Tensor Product and Operator Representation}
Because the H-distribution is a jointly continuous bilinear functional on our product topology, it factors, by the universal
property of the projective tensor product, through a single continuous linear functional. This gives an operator-theoretic
view of the macroscopic defect measure; it is analogous in spirit to the kernel representation of classical distributions,
though, both factors being ordinary locally convex spaces, it requires only the universal property and none of the nuclearity
underlying the Schwartz Kernel Theorem.

\begin{proposition}\label{prop:operator_rep}
Let $X = \mathrm{VMO}_c(\Rd)$ and $Y = \mathrm{C}^\kappa(\mathrm{S}^{d-1})$. The jointly continuous H-distribution $D$ extends
uniquely to a continuous linear functional on the projective topological tensor product $X \hat{\otimes}_\pi Y$.
Furthermore, $D$ induces a continuous linear operator from the strict LF-space $X$ into the space of spherical distributions.
\end{proposition}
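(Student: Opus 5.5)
The plan is to derive both assertions from the universal property of the projective tensor product, with Proposition \ref{prop:joint_cont} supplying the needed joint continuity. First I would check that $D$, as constructed in Theorem \ref{thm:h_dist_vmo}, is a jointly continuous bilinear map on $X\times Y$. On each Banach step the bound \eqref{eq:h_dist_bound} gives $|D(\varphi,\psi)|\le C_K\|\varphi\|_{\mathrm{BMO}(\Rd)}\|\psi\|_{\mathrm{C}^\kappa(\mathrm{S}^{d-1})}$. This holds on the dense bounded subclass, and it passes to all of $\mathrm{VMO}_K(\Rd)$ by Proposition \ref{prop:bounded_density}. Proposition \ref{prop:joint_cont} then upgrades this stepwise boundedness to joint continuity on $X\times Y$.

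Next I would apply the defining property of $X\otimes_\pi Y$. For locally convex $X,Y$, the projective topology is the finest locally convex topology making $\otimes: X\times Y\to X\otimes Y$ continuous. Hence every jointly continuous bilinear form $D$ factors as $D=\tilde D\circ\otimes$ with $\tilde D$ a continuous linear functional on $X\otimes_\pi Y$. Concretely, $\tilde D(\sum_i \varphi_i\otimes\psi_i)=\sum_i D(\varphi_i,\psi_i)$, and this is well defined by bilinearity. For continuity I would exhibit a continuous seminorm $p\otimes q$ dominating $|\tilde D|$, where $p$ and $q$ are the seminorms produced by joint continuity, using $|\tilde D(t)|\le\inf\sum_i p(\varphi_i)q(\psi_i)$. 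Since $\mathbb{C}$ is complete, $\tilde D$ extends uniquely by uniform continuity to the completion $X\hat\otimes_\pi Y$. Uniqueness holds because $X\otimes Y$ is dense in its completion.

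For the operator statement I would set $T\varphi:=D(\varphi,\cdot)$. For fixed $\varphi\in\mathrm{VMO}_K(\Rd)$, the estimate $|T\varphi(\psi)|\le C_K\|\varphi\|_{\mathrm{BMO}}\|\psi\|_{\mathrm{C}^\kappa}$ shows that $T\varphi\in Y'=(\mathrm{C}^\kappa(\mathrm{S}^{d-1}))'$, the spherical distributions of order $\kappa$. It also gives $\|T\varphi\|_{Y'}\le C_K\|\varphi\|_{\mathrm{BMO}}$, so $T$ is continuous from each step into the Banach space $Y'$ equipped with its strong (dual norm) topology. The universal property of the inductive limit then yields continuity of $T:X\to Y'$.

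The main obstacle is the first step: joint continuity on a product where one factor is an LF-space is not automatic from separate continuity. I rely on the barrelledness argument of Proposition \ref{prop:joint_cont}, and I would double-check it in a specific way. I would verify directly that the stepwise bounds define a single continuous seminorm on $X$. The candidate is the convex hull of the sets $\{\varphi\in\mathrm{VMO}_{K_n}: \|\varphi\|_{\mathrm{BMO}}\le \varepsilon_n\}$ with $\varepsilon_n\sim 2^{-n}/C_{K_n}$. This is a $0$-neighbourhood in the strict LF topology, and on it $|D(\varphi,\psi)|\le\|\psi\|_Y$. This gives joint continuity, since $Y$ is normed.
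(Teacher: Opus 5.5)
Your proposal is correct and follows essentially the same route as the paper. Both arguments take joint continuity of $D$ on $X\times Y$ (Proposition \ref{prop:joint_cont}), factor through $X\otimes_\pi Y$ by the universal property of the projective tensor product and extend uniquely to the completion, and read the partial evaluation $\varphi\mapsto D(\varphi,\cdot)$ as a continuous map into the strong dual $Y'=(\mathrm{C}^\kappa(\mathrm{S}^{d-1}))'$ of spherical distributions of order at most $\kappa$. Your extra details make rigorous what the paper only cites: the explicit projective seminorm, the stepwise bound $\|D(\varphi,\cdot)\|_{Y'}\le C_K\|\varphi\|_{\mathrm{BMO}(\Rd)}$ combined with the universal property of the inductive limit, and the explicit absolutely convex $0$-neighbourhood. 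As a side benefit, they establish joint continuity directly, without appealing to barrelledness and Banach--Steinhaus.
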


\begin{proof}
Let $D: X \times Y \to \mathbb{C}$ be the jointly continuous bilinear functional established in the preceding proposition.
By the universal property of the projective topological tensor product, any jointly continuous bilinear mapping defined on
the Cartesian product of two locally convex spaces factors uniquely through a continuous linear mapping on their projective
tensor product $X \hat{\otimes}_\pi Y$ \cite{Treves}. Thus, the H-distribution is identified as a single continuous linear
functional belonging to the dual space $(X \hat{\otimes}_\pi Y)'$.

Additionally, this joint continuity guarantees that the partial evaluation mapping $\varphi \mapsto D(\varphi, \cdot)$ is a
well-defined, continuous linear operator from $X$ into the strong topological dual $Y'$. Since $\kappa$ is the least integer
strictly greater than $d/2$, $Y = \mathrm{C}^\kappa(\mathrm{S}^{d-1})$ is the Banach space of $\kappa$-times continuously
differentiable functions on the sphere, and its strong dual $Y'$ is precisely the space of \emph{spherical distributions of order at most $\kappa$}
on the compact manifold $\mathrm{S}^{d-1}$, that is, the continuous linear functionals on $\mathrm{C}^\kappa(\mathrm{S}^{d-1})$.
\end{proof}

\begin{remark}
This operator representation reads the H-distribution as a measuring device. Given a localized spatial test function $\varphi \in X$,
which isolates a geometric region of highly oscillatory $\mathrm{VMO}$ concentration, the continuous linear operator returns the angular
spectrum of the trapped macroscopic energy as a spherical distribution. Thus the defect measure separates the spatial location of the
trapped energy from its directional scattering signature. This operator representation is used directly in the main text:
Corollary \ref{cor:angular_defect} uses it to construct the angular defect distribution $T_{Q\phi} = D(Q\phi,\cdot)$ and to identify
the scalar defect of Theorem \ref{thm:defect_rep} as its constant-symbol component.
\end{remark}

\subsection{Separability and Sequential Weak-$\ast$ Compactness}
In classical functional analysis, the space $\mathrm{L}^\infty(\Rd)$ is non-separable. Consequently, bounded sets in its topological
dual are not metrizable in the weak-$\ast$ topology, which prevents the extraction of weakly-$\ast$ convergent subsequences and forces
the use of generalized nets. The topology of our test space avoids this obstruction.

\begin{proposition}
The strict LF-space $\mathrm{VMO}_c(\Rd)$ is separable, and so is its bounded subspace $\mathrm{L}^\infty(\Rd) \cap \mathrm{VMO}_c(\Rd)$.
Consequently, every bounded sequence in the Fr\'echet dual space admits a weakly-$\ast$ convergent subsequence.
\end{proposition}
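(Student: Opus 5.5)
The plan has two parts: separability, proved step by step, and then sequential weak-$\ast$ compactness, obtained by a Cantor diagonal argument. The second part follows the extraction scheme already used at the end of the proof of Theorem \ref{thm:h_dist_vmo}.

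\emph{Separability.} First, $\mathrm{VMO}(\Rd)$ is separable in the $\mathrm{BMO}$ norm.
\begin{itemize}
\item For $g\in\mathrm{C}_c(\Rd)$ one has $\|g\|_{\mathrm{BMO}}\le 2\|g\|_{\mathrm{L}^\infty}$.
\item Hence any countable family that is uniformly dense in $\mathrm{C}_c(\Rd)$ is dense in $\mathrm{VMO}(\Rd)$, since the latter is the $\mathrm{BMO}$-closure of $\mathrm{C}_c(\Rd)$. A suitable family is, for instance, rational-coefficient polynomials multiplied by rational-radius cut-offs, taken over an exhaustion.
\item Each Banach step $\mathrm{VMO}_{K_n}(\Rd)$ is a subset of this separable metric space, so it is separable.
\item The same argument applies to $\mathrm{L}^\infty(\Rd)\cap\mathrm{VMO}_{K_n}(\Rd)$ with the induced $\mathrm{BMO}$ metric, because every subset of a separable metric space is separable.
\end{itemize}
Next, choose countable sets $E_n\subset\mathrm{VMO}_{K_n}(\Rd)$, respectively $E_n\subset\mathrm{L}^\infty(\Rd)\cap\mathrm{VMO}_{K_n}(\Rd)$, that are dense in the step, and put $E=\bigcup_n E_n$. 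The set $E$ is countable. It is dense in the inductive-limit topology for the following reason. Any $f\in\mathrm{VMO}_c(\Rd)$ lies in some step $\mathrm{VMO}_{K_n}(\Rd)$, and the LF-topology induces on that step its Banach topology, as recorded in Section \ref{sec:notation}. So every neighbourhood of $f$ contains a $\mathrm{BMO}$-ball of $\mathrm{VMO}_{K_n}(\Rd)$, and that ball meets $E_n$.

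\emph{Sequential weak-$\ast$ compactness.} Let $(T_k)$ be a bounded sequence in $\mathrm{VMO}_c(\Rd)'$. The first step is to upgrade boundedness to equicontinuity.
\begin{itemize}
\item The space $\mathrm{VMO}_c(\Rd)$ is barrelled, as in the proof of Proposition \ref{prop:joint_cont}.
\item By the Banach--Steinhaus theorem, a weak-$\ast$ bounded subset of the dual of a barrelled space is equicontinuous.
\item Restricting to a step therefore gives $\sup_k\|T_k|_{\mathrm{VMO}_{K_n}}\|\le C_n<\infty$ for every $n$.
\end{itemize}
Then extract the subsequence.
\begin{itemize}
\item The set $E$ is countable and each scalar sequence $(T_k(e))_k$ is bounded, so a Cantor diagonal extraction yields a subsequence along which $T_k(e)$ converges for every $e\in E$.
\item Fix $n$ and $f\in\mathrm{VMO}_{K_n}(\Rd)$. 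An $\varepsilon/3$ argument using the uniform bound $C_n$ and the density of $E_n$ in the step shows that $(T_k(f))_k$ is Cauchy.
\item Let $T(f)$ denote the limit. It is linear and satisfies $|T(f)|\le C_n\|f\|_{\mathrm{BMO}}$ on each step.
\item By the universal property of the inductive limit (Proposition \ref{prop:joint_cont}), $T$ is continuous on $\mathrm{VMO}_c(\Rd)$. Hence $T_k\to T$ weakly-$\ast$.
\end{itemize}
For the bounded subspace, the same argument runs with the $E_n$ chosen inside $\mathrm{L}^\infty\cap\mathrm{VMO}_{K_n}$. That subspace is dense in each step by Proposition \ref{prop:bounded_density}, so convergence on it already determines the limit on the whole step.

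The step I expect to need the most care is the passage from ``bounded in the dual'' to a uniform norm bound on each step. This is exactly where barrelledness and Banach--Steinhaus are needed. Without them, boundedness in a weak dual topology would not yield the constants $C_n$, and the density argument extending convergence from $E$ to all of $\mathrm{VMO}_c(\Rd)$ would fail.
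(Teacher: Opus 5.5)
Your proof is correct, but it takes a more hands-on route than the paper.

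\textbf{Separability.} Both arguments begin by noting that each step $\mathrm{VMO}_{K_n}(\Rd)$ inherits separability from $\mathrm{VMO}(\Rd)$. The paper then builds a global countable dense set from two facts: $\mathcal{D}(\Rd)$ is dense in $\mathrm{VMO}_c(\Rd)$, and the separable space $\mathcal{D}(\Rd)$ embeds continuously into it. You instead take $E=\bigcup_n E_n$ and use only the continuity of the inclusions $i_n$. This avoids needing the mollification-in-$\mathrm{BMO}$ density result at all.

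\textbf{Sequential weak-$\ast$ compactness.} The paper quotes a generalized Banach--Alaoglu theorem: polars of zero-neighbourhoods in a separable locally convex space are weak-$\ast$ compact and metrizable. It then asserts that bounded sets of the dual are sequentially compact. That step tacitly requires every bounded set of the dual to lie in such a polar, i.e.\ to be equicontinuous, and the paper's proof does not justify this. Your argument supplies exactly this step: barrelledness plus Banach--Steinhaus gives the step-wise constants $C_n$. You then carry out by hand the diagonal extraction and the $\varepsilon/3$ extension that underlie the metrizability of polars.

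One small point: if ``bounded'' is read in the Fr\'echet (strong) dual topology, the constants $C_n$ are immediate. The unit ball of each step is bounded in $\mathrm{VMO}_c(\Rd)$, so strong boundedness already gives a uniform bound on it. Barrelledness is needed only for the stronger, weak-$\ast$-bounded version that you actually prove.

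The paper's proof is shorter and places the result within standard duality theory. Yours is self-contained and closes the equicontinuity gap the paper leaves implicit.
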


\begin{proof}
Each Banach step $\mathrm{VMO}_{K_n}(\Rd)$ is separable: the global space $\mathrm{VMO}(\Rd)$ is separable, being the closure of
$\mathrm{C}_c(\Rd)$ in the $\mathrm{BMO}$ norm, and separability passes to its closed subspaces. By the density proposition above,
$\mathcal{D}(\Rd)$ is dense in $\mathrm{VMO}_c(\Rd)$ in the inductive limit topology; since $\mathcal{D}(\Rd)$ is itself separable it
contains a countable dense subset $\mathcal{M}$, and the continuity of the inclusion $\mathcal{D}(\Rd)\hookrightarrow\mathrm{VMO}_c(\Rd)$
forces $\mathcal{M}$ to be dense in $\mathrm{VMO}_c(\Rd)$ as well. Thus $\mathrm{VMO}_c(\Rd)$ is a separable topological vector space,
and its subspace $\mathrm{L}^\infty(\Rd)\cap\mathrm{VMO}_c(\Rd)$ is separable in the induced ($\mathrm{BMO}$-based) topology.

By the generalized Banach-Alaoglu theorem, the polar of any neighborhood of the origin in a separable locally convex space is a
weakly-$\ast$ compact, metrizable subset of the dual space. Therefore, bounded sets in the Fr\'echet dual space are sequentially
compact in the weak-$\ast$ topology, guaranteeing that any bounded sequence of linear functionals admits a weakly-$\ast$ convergent subsequence.
\end{proof}

\begin{remark}
This separability justifies the Cantor diagonal extraction used in the proof of Theorem \ref{thm:h_dist_vmo}. We note that the argument
does not require separability of $\mathrm{L}^\infty(\Rd)$: it is the $\mathrm{BMO}$-based inductive limit topology of $\mathrm{VMO}_c(\Rd)$
that is separable, and the essential supremum enters only as a set restriction on the functions being evaluated, never as part of the topology.
\end{remark}

\end{document}